\def\bn{\mathbb{N}}
\def\br{\mathbb{R}} 
\def\bc{\mathbb{C}} 
\def\bj{\mathbb{J}}
\def\h{\mathcal{H}} 
\newcommand\ca{\mathcal{A}}
\newcommand\cz{\mathcal{Z}}
\newcommand\crr{\mathcal{R}}
\newcommand{\bh}{{\rm B}(\mathcal{H})}
\newcommand{\kh}{{\rm K}(\mathcal{H})}
\newcommand{\er}{{\rm E}(\crr)}
\newcommand{\ea}{{\rm E}(A)}
\newcommand{\oz}{\omega_{\cz}}
\newcommand{\ozp}{\omega_{\cz}^+}
\newcommand{\ozm}{\omega_{\cz}^-}
\newcommand{\rzp}{\rho_{\cz}^+}
\newcommand{\rzm}{\rho_{\cz}^-}
\newcommand{\pdr}{\crr_{\sharp}}
\newcommand{\tom}{\tilde{\omega}}
\newcommand{\tr}{\tilde{\rho}}
\newcommand{\bhp}{{\rm B}(\mathcal{H}_{\pi})}
\newcommand{\normc}[1]{\overline{\overline{#1}}}
\newtheorem{theorem}{Theorem}[section]
\newtheorem{lemma}[theorem]{Lemma}
\newtheorem{co}[theorem]{Corollary}
\theoremstyle{remark}
\newtheorem{re}[theorem]{Remark}
\theoremstyle{definition}
\newtheorem{ex}[theorem]{Example}
\numberwithin{equation}{section}
\begin{document}

\title[]{Which states can be reached from a given state by  unital completely positive maps?} 

\author{Bojan Magajna} 
\address{Department of Mathematics\\ University of Ljubljana\\
Jadranska 21\\ Ljubljana 1000\\ Slovenia}
\email{Bojan.Magajna@fmf.uni-lj.si}

\thanks{The author acknowledges the financial support from the
Slovenian Research Agency (research core funding no. P1-0288).}

\keywords{C$^*$-algebra, von Neumann algebra,  completely positive map, state, quantum channel.}

\subjclass[2020]{Primary 46L30; Secondary 46L07, 46L05}

\begin{abstract}For a state $\omega$ on a C$^*$-algebra $A$ we characterize all states $\rho$ in the weak* closure of the set of all states of the form $\omega\circ\varphi$, where $\varphi$ is a map on $A$ of the form $\varphi(x)=\sum_{i=1}^na_i^*xa_i,$ $\sum_{i=1}^na_i^*a_i=1$ ($a_i\in A$, $n\in\bn$). These are precisely the states $\rho$  that satisfy $\|\rho|J\|\leq\|\omega|J\|$ for each ideal $J$ of $A$.  The corresponding question for normal states on a von Neumann algebra $\crr$ (with the weak* closure replaced by the norm closure) is also considered. All normal states of the form $\omega\circ\psi$, where $\psi$ is a quantum channel on $\crr$ (that is, a map of the form $\psi(x)=\sum_ja_j^*xa_j$, where $a_j\in\crr$ are such that the sum $\sum_ja_j^*a_j$ converge to $1$ in the weak operator topology) are characterized. A variant of this topic for  hermitian functionals instead of states is investigated. Maximally mixed states are shown to vanish on the strong radical of a C$^*$-algebra and for properly infinite von Neumann algebras the converse also holds.
\end{abstract}

\maketitle

\section{Introduction}

For two  states $\omega$ and $\rho$ on a C$^*$-algebra $\crr$,  $\rho$ is regarded to be {\em unitarily more mixed} than $\omega$ if $\rho$ is contained in the weak* closure of the convex hull of the unitary orbit of $\omega$. In \cite{A}, \cite{AU1} and \cite{W} Alberti, Uhlmann and Wehrl studied the notion of maximally unitarily mixed states on von Neumann algebras and such states were characterized by Alberti in \cite{A}. Recently this topic has been revitalized in the broader context of C$^*$-algebras by Archbold, Robert and Tikuisis \cite{ART}, who proved among other things that the weak*closure of the set of maximally unitarily mixed states on a C$^*$-algebra $A$ is equal to the weak* closure of the convex hull of tracial states and states that factor through simple traceless quotients of $A$. 
However, the evolution of open quantum systems is not always unitary, but is described by more general completely positive (trace preserving) maps of the form $\omega\mapsto\sum a_i\omega a_i^*$, say on the predual of $\bh$, so it seems worthwhile to study also a less restrictive  notion of when one state is more mixed than other.  The dual of such a map is a unital completely positive map of the form 
\begin{equation}\label{e}x\mapsto\sum a_i^*xa_i,\ \ \sum a_i^*a_i=1\end{equation} on $\crr=\bh$. Let   $\ea$ be the set of all unital completely positive maps on $A$ of the form (\ref{e}), where $a_i\in A$ and the sums have only finitely many terms. A natural question in this context is, when a  state $\rho$ on a C$^*$-algebra $A$ (or a normal state on a von Neumann algebra $\crr$) is in the weak* closure (or the norm closure) of the set $\omega\circ\ea$ of all states of the form
$\omega\circ\psi$, where $\omega$ is a fixed state (perhaps normal in the case of von Neumann algebras) and $\psi$ runs over the set $\ea$. In Section 2 we show  for normal states on a von Neumann algebra $\crr$ that $\rho$ is in the norm closure of $\omega\circ\er$  if and only if $\rho$ and $\omega$ agree on the center of $\crr$.  We also study  the same topic for hermitian normal functionals on $\crr$ and provide an explicit normal mapping $\psi$ in the point-weak* closure of $\er$ such that $\rho=\omega\circ\psi$. In the special case of $\crr=\bh$ hermitian normal functionals are just hermitian trace class operators and  maps mapping one such operator to another have been constructed by M-H. Hsu, D. Li-Wei Kuo and M-C. Tsai in \cite{HKT} and by Y. Li, and  H-K Du in \cite{LD}, but they do not study the question if such maps are in the closure of ${\rm E}(\bh)$. 

For a normal state $\omega$ on a von Neumann algebra $\crr\subseteq\bh$ and a map $\phi$ of the form (\ref{e}), where $a_i\in\crr$ and the sums may have infinitely many terms (that is, $\phi$ is a quantum channel) any state of the form $\rho=\omega\circ\phi$ has the following property: if $\tom$ is a normal state on $\bh$ that extends $\omega$, then there is a normal state $\tr$ on $\bh$ that extends $\rho$ such that $\tr$ and $\tom$ coincide on the commutant $\crr^{\prime}$ of $\crr$ (namely, $\tr=\tom\circ\tilde{\phi}$, where $\tilde{\phi}$ is the map on $\bh$ given by the same formula as $\phi$ on $\crr$). This property holds in any faithful normal representation of $\crr$ on a Hilbert space $\h$. In Section 2 we will see that this property characterizes states of the form $\omega\circ\phi$, where $\phi$ runs over quantum channels on $\crr$.

Then in Section 3 we study the analogous topic for hermitian functionals $\rho, \omega$ on a unital C$^*$-algebra $A$. If $A$ has Hausdorff primitive spectrum, Theorem \ref{H} shows that $\rho$ is in the weak* closure of  $\omega\circ\ea$ if and only if $\omega$ and $\rho$ agree on the center of $A$ and $\|c\rho\|\leq\|c\omega\|$ for each positive element $c$ in the center of $A$. If the primitive spectrum of $A$ is not Hausdorff, this characterization is not true any more, but an alternative one is given in Theorem \ref{H2}.

For two states $\omega$ and $\rho$ on a C$^*$-algebra $A$, $\rho$ is regarded here to be {\em more mixed than $\omega$} if $\rho$ is contained in the weak* closure $\overline{\omega\circ\ea}$ of the set $\omega\circ\ea:=\{\omega\circ\psi:\, \psi\in\ea\}$.  Then $\omega$  is called {\em maximally mixed} if for each state $\rho$ on $A$ the condition that $\rho\in\overline{\omega\circ\ea}$  implies that $\omega\in\overline{\rho\circ\ea}$; in other words, $\overline{\omega\circ\ea}$ is minimal among weak* closed $\ea$-invariant subsets of the set $S(A)$ of all states on $A$. This is a coarser relation than the one considered in references mentioned above, where instead of $\ea$ only convex combinations of unitary similarities are considered. In Section 4 we show that each maximally mixed state on a unital C$^*$-algebra $A$ must annihilate the strong radical $J_A$ of $A$ (= the intersection of all two-sided maximal ideals of $A$) and, if $A$ is a properly infinite von Neumann algebra, the converse is also true. Further, the set $S_m(A)$ of all maximally mixed states contains all states that annihilate some intersection of finitely many maximal ideals of $A$ and is therefore weak* dense in $S(A/J_A)$. These results are analogous to those of \cite{ART} and \cite{A} for unitarily maximally mixed states. For C$^*$-algebras  with the Dixmier property the authors of \cite{ART} provided a more precise determination of maximally unitarily mixed states than for general C$^*$-algebras. In our present context the role of C$^*$-algebras with the Dixmier property can be played by  weakly central C$^*$-algebras. For a weakly central C$^*$-algebra $A$ we show that the set $S_m(A)$ is weak* closed (and hence equal to the set of all states that annihilate $J_A$) if and only if each primitive ideal of $A$ which contains $J_A$ is maximal. States in $S_m(\crr)$ for a general von Neumann algebra $\crr$ are also characterized.

{\em Throughout the paper an ideal means a norm closed two-sided ideal and all C$^*$ algebras are assumed to be unital unless explicitly stated otherwise.}  

\section{The case of normal states on a von Neumann algebra}

We denote by $A^{\sharp}$ the dual of a Banach case $A$. In what follows $A$ will usually be a C$^*$-algebra. Throughout this article $\crr$ is a von Neumann algebra,  $\pdr$ its predual (that is, the space of all weak* continuous linear functionals on $\crr$) and $\cz$  the center of $R$. Basic facts concerning von Neumann algebras, that will be used here without explicitly mentioning a reference, can be found  in \cite{KR} and  \cite{T}.

We will need a preliminary result of independent interest, which in the special case (when, in the notation of Theorem \ref{th} below, $\ca=\crr$ and $\crr$ is a factor or has a separable predual, and positivity was not considered), has been proved by Chatterjee and Smith \cite{CS}. We would like to avoid the separability assumption. In its proof we will use the notion of the minimal C$^*$-tensor product over $\cz$ of two C$^*$-algebras $A$ and $B$ both containing an abelian W$^*$-algebra $\cz$ in their centers. This product $A\otimes_{\cz}B$ (\cite{B}, \cite{M1}, \cite{GM}) can be defined as the closure of the image of the algebraic tensor product $A\odot_{\cz}B$ in $\oplus_{t\in\Delta}A(t)\otimes B(t)$,
where $\Delta$ is the maximal ideal space of $\cz$ and, for each $t\in\Delta$, $A(t)$ denotes the quotient C$^*$-algebra $A/(tA)$, where $tA$ is the closed ideal in $A$ generated by $t$ (and similarly for $B(t)$).  (If at least one of the algebras $A$, $B$ is exact, which will be the case in our application in the proof of Theorem \ref{thvn}, $A\otimes_{\cz}B$ coincides with the quotient of $A\otimes B$ by the closed ideal generated by all elements of the form $az\otimes b-a\otimes zb$ ($a\in A$, $b\in B$, $z\in\cz$)  \cite[3.12]{M1}.)

\begin{theorem}\label{th}Let $\ca$ be an injective von Neumann subalgebra of a von Neumann algebra $\crr$ containing the center $\cz$ of $\crr$. Then each completely contractive
$\cz$-module map $\psi:\crr\to\ca$ is (as a map into $\crr$) in the point-weak* closure of the set consisting of all maps of the form $x\mapsto\sum_{i=1}^na_i^*xb_i$ ($x\in\crr$), where $n\in\bn$ and $a_i, b_i\in\crr$ satisfy $\sum_{i=1}^na_i^*a_i\leq1$ and $\sum_{i=1}^nb_i^*b_i\leq1$. If in addition $\psi$ is unital, then $\psi$ is in the point-weak* closure of $\er$.
\end{theorem}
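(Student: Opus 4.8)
The plan is to run a bipolar (Hahn--Banach) argument in the space $L(\crr,\crr)$ of linear maps $\crr\to\crr$ equipped with the topology of pointwise weak* convergence. Its continuous dual consists of the finite sums $u=\sum_k x_k\otimes\mu_k$ with $x_k\in\crr$ and $\mu_k\in\pdr$, paired with a map $\Theta$ by $\langle\Theta,u\rangle=\sum_k\mu_k(\Theta(x_k))$. Write $\mathcal{E}$ for the set of maps $x\mapsto\sum_{i=1}^na_i^*xb_i$ subject to $\sum_ia_i^*a_i\le1$ and $\sum_ib_i^*b_i\le1$; this set is convex and balanced, so by the bipolar theorem its point-weak* closure coincides with its bipolar $\mathcal{E}^{\circ\circ}$. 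It therefore suffices to show that every completely contractive $\cz$-module map $\psi:\crr\to\ca$ satisfies $|\langle\psi,u\rangle|\le1$ for each $u$ in the polar $\mathcal{E}^\circ$.

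The heart of the matter is the identification of $\mathcal{E}^\circ$. I would first compute, for $u=\sum_k x_k\otimes\mu_k$, the quantity
\[
\|u\|_{\mathcal{E}}:=\sup_{\Theta\in\mathcal{E}}\Big|\sum_{i,k}\mu_k(a_i^*x_kb_i)\Big|,
\]
and recognize it as a module Haagerup-type norm over the center $\cz$: the two constraints $\sum_ia_i^*a_i\le1$ and $\sum_ib_i^*b_i\le1$ are exactly the column conditions defining the Haagerup norm (as the vector-functional computation $\sum_i\mu_k(a_i^*x_kb_i)=\sum_i\langle x_kb_i\xi_k,a_i\eta_k\rangle$ makes transparent), while the $\cz$-bimodule bookkeeping is carried by the tensor product over $\cz$ recalled just before the theorem. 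In this way $\mathcal{E}^\circ$ becomes the unit ball of the predual of the module Haagerup tensor product, whose dual is the space of completely bounded $\cz$-module maps; this is the representation result whose factor/separable case is due to Chatterjee and Smith, needed here in module form and without the separability restriction.

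With the polar so identified, the required estimate $|\langle\psi,u\rangle|\le\|u\|_{\mathcal{E}}$ is the assertion that $\psi$, as a completely contractive $\cz$-module map, pairs with elements of the module Haagerup tensor product within the cb-norm bound. Here the injectivity of $\ca$ is decisive: since $\ca$ is injective and contains $\cz$, there is a $\cz$-bimodule conditional expectation of $\bh$ (hence of $\crr$) onto $\ca$, and Wittstock's module extension lets me represent $\psi$ so that $\sum_k\mu_k(\psi(x_k))$ is dominated by $\|\psi\|_{cb}\le1$ times the Haagerup norm of $u$. This gives $\psi\in\mathcal{E}^{\circ\circ}$ and proves the first assertion. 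The step I expect to be the main obstacle is precisely this one: verifying that complete contractivity into the injective algebra $\ca$---rather than into all of $\bh$---already attains the Haagerup bound uniformly over $u$, so that restricting the range to $\ca$ opens no gap; this is where injectivity, the hypothesis $\cz\subseteq\ca$, and the center-valued structure must be used together.

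For the unital refinement, recall that a unital complete contraction is automatically completely positive, so $\psi$ is unital completely positive. I would repeat the argument with $\mathcal{E}$ replaced by $\er$, which is convex (a convex combination of $x\mapsto\sum_ia_i^*xa_i$ and $x\mapsto\sum_jc_j^*xc_j$ is again of the same type after absorbing the scalars into the $a_i$ and $c_j$, with coefficient sum still $1$) though no longer balanced, so an affine separation and the corresponding bipolar description apply. The new ingredient is the exact normalization $\sum_ia_i^*a_i=1$ together with the identification $a_i=b_i$: using $\psi(1)=1$ and complete positivity I would show that the symmetric, normalized constraint is the correct one for the polar of $\er$, so the approximating elementary maps may be taken in $\er$. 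The main difficulty here is to propagate the exact equality $\sum_ia_i^*a_i=1$ through the point-weak* closure rather than only the inequality $\le1$, which is where unitality pins down the diagonal value and positivity forces the two contractions to coincide in the limit.
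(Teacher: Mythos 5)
Your bipolar setup is fine as far as it goes ($\mathcal{E}$ is indeed convex and balanced, and the dual of $L(\crr,\crr)$ in the point-weak* topology is as you say), but the argument has a genuine circularity at its heart. The step where you ``recognize'' $\mathcal{E}^{\circ}$ as the unit ball of the predual of the module Haagerup tensor product, whose dual is the space of completely bounded $\cz$-module maps, \emph{is} the theorem: the statement that every completely contractive $\cz$-module map attains the pairing bound against the central Haagerup norm is equivalent, via exactly your bipolar reduction, to the point-weak* density being proved. The Chatterjee--Smith duality you invoke is only available when $\crr$ is a factor or has separable predual, and the paper states explicitly that the whole point of Theorem \ref{th} is to remove that restriction; you acknowledge needing it ``in module form and without the separability restriction'' but supply no proof, and no known off-the-shelf result fills that role. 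The paper's actual engine is entirely different and constructive: it realizes $\psi\otimes_{\cz}{\rm id}$ as a completely contractive map on $\normc{\crr\crr^{\prime}}\subseteq\bh$ using Blanchard's minimality of the $\cz$-tensor norm and the Giordano--Mingo isomorphism $\ca\otimes_{\cz}\crr^{\prime}\cong\normc{\ca\crr^{\prime}}$ --- this is where injectivity of $\ca$ is really used, to make the multiplication map $\ca\otimes\crr^{\prime}\to\bh$ completely contractive, not to produce a conditional expectation --- then extends by Wittstock to an $\crr^{\prime}$-bimodule complete contraction on all of $\bh$, and finally applies the Effros--Kishimoto theorem, which gives point-weak* approximation of such bimodule maps by elementary operators with coefficients in $(\crr^{\prime})^{\prime}=\crr$. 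Without Effros--Kishimoto (or an honest proof of the module Haagerup duality in full generality), your plan has no mechanism to produce the approximating elementary maps.

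The unital refinement is also not carried out. Re-running a separation argument for $\er$ (convex but not balanced) would again require identifying its polar, i.e.\ precisely the unproved characterization, and your remark that ``positivity forces the two contractions to coincide in the limit'' is the right intuition but not an argument. The paper does this by hand: unitality gives $(b(k)-a(k))^*(b(k)-a(k))\leq 2-a(k)^*b(k)-b(k)^*a(k)\to 2-2\psi(1)=0$, so $b(k)-a(k)\to0$ strongly and one may replace $b(k)$ by $a(k)$; the exact normalization is then restored not by a limiting argument on $\sum_i a_i^*a_i$ (which, as you suspect, does not pass through point-weak* closure) but by adding the explicit correction term $x\mapsto\sqrt{1-a(k)^*a(k)}\,x\,\sqrt{1-a(k)^*a(k)}$, which makes each approximant an honest element of $\er$. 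You would need both this trick and the Effros--Kishimoto input to complete your outline.
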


\begin{proof}Let $\h$ be a Hilbert space such that $\crr\subseteq\bh$. It follows from \cite[5.5.4]{KR} that there is a natural $*$-isomorphism $\iota$ from $\crr\crr^{\prime}$ (the subalgebra of $\bh$ generated by $\crr\cup\crr^{\prime}$) onto the algebraic tensor product $\crr\odot_{\cz}\crr^{\prime}$, given by
$rr^{\prime}\mapsto r\otimes_{\cz} r^{\prime}$. By \cite[2.9]{B} the tensor norm on $\crr{\otimes}_{\cz}\crr^{\prime}$ restricted to $\crr\odot_{\cz}\crr^{\prime}$ is minimal among all C$^*$-tensor norms on $\crr\odot_{\cz}\crr^{\prime}$, hence   the $*$-homomorphism $\iota$ extends uniquely to the norm closure $\normc{\crr\crr^{\prime}}$.  Since $\ca$ is injective and commutes with $\crr^{\prime}$ the multiplication $\mu_0:\ca\otimes\crr^{\prime}\to\normc{\ca\crr^{\prime}}\subseteq\bh$ is a completely contractive $*$-homomorphism \cite[9.3.3, 3.8.5]{BO}.  But more is true: by \cite[4.2]{GM} the natural map $\ca\odot_{\cz}\crr^{\prime}\to\ca\crr^{\prime}$ extends (uniquely) to a $*$-isomorphism $\ca\otimes_{\cz}\crr^{\prime}\to \normc{\ca\crr^{\prime}}$. It follows that  the composition
$$\bh\supseteq\overline{\overline{\crr\crr^{\prime}}}\to\crr{\otimes}_{\cz}\crr^{\prime}\stackrel{\psi{\otimes}_{\cz}{\rm id}}{\longrightarrow}\mathcal{A}{\otimes}_{\cz}\crr^{\prime}\cong\normc{\mathcal{A}\crr^{\prime}}\subseteq\bh$$
is completely contractive and clearly it is an $\crr^{\prime}$-bimodule map,
hence extends to such a map $\phi$ on $\bh$ by the Wittstock extension theorem (see \cite{Wi} or \cite[3.6.2]{BLM}). By \cite{EK}   $\phi$ can be approximated in the point-weak* topology  by a net of elementary complete contractions of the form 
\begin{equation}\label{30}x\mapsto \sum_ia_i^*(k)xb_i(k)=a(k)^*xb(k)\ \  (x\in\bh)\end{equation}
where $a(k)=(a_1(k),\ldots,a_n(k))^T$ and $b(k)=(b_1(k),\ldots,b_n(k))^T$ are columns with the entries $a_i(k),b_i(k)\in\crr$ and \begin{equation}\label{31}a^*(k)a(k)=\sum_ia_i^*(k)a_i(k)\leq1,\ \ b^*(k)b(k)=\sum_ib_i^*(k)b_i(k)\leq1.\end{equation} Thus $\psi$ (=$\phi|\crr$) can also be approximated by such maps. 

Assume now in addition that $\psi$ is unital and consider a point-weak* approximation of $\psi$ of the form (\ref{30}), (\ref{31}).  Since $$0\leq(b(k)-a(k))^*(b(k)-a(k))=b(k)^*b(k)+a(k)^*a(k)-a(k)^*b(k)-b(k)^*a(k)$$$$\leq2-a(k)^*b(k)-b(k)^*a(k)\to2-2\psi(1)=0,$$
it follows that $b(k)-a(k)$ tends to $0$ in the strong operator topology. Hence $\psi$ can be approximated by maps of the form $x\mapsto a(k)^*xa(k)$ in the point-weak* operator topology. To see this, write  $$\psi(x)-a(k)^*xa(k)=(\psi(x)-a(k)^*xb(k))+(a(k)^*x(b(k)-a(k)))$$
and note that $\|a(k)^*x(b(k)-a(k))\xi\|\leq\|x\|\|(b(k)-a(k))\xi\|$ for each vector $\xi\in\h$.  Finally, as $a(k)^*a(k)$ tends to $\psi(1)=1$ in the strong operator topology, $\psi$ can be approximated by maps of the form $$x\mapsto a(k)^*xa(k)+\sqrt{1-a(k)^*a(k)}x\sqrt{1-a(k)^*a(k)},$$ that is, by unital completely positive elementary maps.
\end{proof}

\begin{lemma}\label{le00}Let $\omega$ and $\rho$ be hermitian functionals on a C$^*$-algebra $A$ such that
$\rho|Z=\omega|Z$ and $\|c\rho\|\leq\|c\omega\|$ for all $c\in Z_+$, where $Z$ is the center of $A$. Then $\rho_+|Z\leq\omega_+|Z$ and $\rho_-|Z\leq\omega_-|Z$. 

Thus, if $Z$ is a von Neumann algebra, $\omega$ and $\rho$ are normal and $p^+$ and $p^-$ are the support projections of $\omega_+|Z$ and $\omega_-|Z$, then there exists elements $c_+$ and $c_-$ in $Z$ such that $0\leq c_+\leq p^+$, $0\leq c_-\leq p^-$, 
$$\rho_+|Z=c_+\omega_+|Z,\ \  \rho_-|Z=c_-\omega_-|Z,\ \ \mbox{and}\ \ (p^+-c_+)\omega_+|Z=(p^--c_-)\omega_-|Z.$$
\end{lemma}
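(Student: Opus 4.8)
The plan is to convert the norm hypothesis $\|c\rho\|\le\|c\omega\|$ ($c\in Z_+$) into an inequality between \emph{positive} functionals on $Z$, and then to combine it with the equality $\rho|Z=\omega|Z$ by an elementary adding/subtracting argument. The central observation, with which I would begin, is that for every $c\in Z_+$ one has
$$\|c\rho\|=\rho_+(c)+\rho_-(c)\quad\text{and}\quad\|c\omega\|=\omega_+(c)+\omega_-(c),$$
where $\omega=\omega_+-\omega_-$ and $\rho=\rho_+-\rho_-$ are the Jordan decompositions and $c\rho$ denotes the (again hermitian, as $c=c^*$ is central) functional $x\mapsto\rho(cx)=\rho(xc)$.

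To establish this identity I would pass to the enveloping von Neumann algebra $A^{**}$. Since $c$ lies in the center $Z$ of $A$ it commutes with all of $A$, hence with all of $A^{**}$ by separate weak* continuity of multiplication, so $c$ is central in $A^{**}$. For central $c\ge0$ and a positive functional $\sigma$ one has $cx^*x=x^*cx=(c^{1/2}x)^*(c^{1/2}x)\ge0$, so $c\sigma\ge0$; moreover $c\sigma\le\|c\|\sigma$ forces $s(c\sigma)\le s(\sigma)$ for the support projections in $A^{**}$. Applying this to $\rho_+$ and $\rho_-$, whose supports are orthogonal, shows that $c\rho_+$ and $c\rho_-$ are positive with orthogonal supports; hence $c\rho=c\rho_+-c\rho_-$ is the Jordan decomposition of $c\rho$ and $\|c\rho\|=\|c\rho_+\|+\|c\rho_-\|=\rho_+(c)+\rho_-(c)$, using that a positive functional attains its norm at $1$. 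The hypothesis then reads $(\rho_++\rho_-)(c)\le(\omega_++\omega_-)(c)$ for all $c\in Z_+$, i.e. $\rho_+|Z+\rho_-|Z\le\omega_+|Z+\omega_-|Z$ as positive functionals on $Z$. Combining this with $\rho_+|Z-\rho_-|Z=\rho|Z=\omega|Z=\omega_+|Z-\omega_-|Z$ and adding (resp. subtracting) the two relations yields $\rho_+|Z\le\omega_+|Z$ and $\rho_-|Z\le\omega_-|Z$, which is the first assertion.

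For the second assertion, assume $Z$ is a von Neumann algebra and $\omega,\rho$ (hence $\omega_\pm,\rho_\pm$) are normal, so that $\omega_\pm|Z$ and $\rho_\pm|Z$ are normal positive functionals on $Z$. Since $Z$ is abelian I would realize it as $L^\infty(\mu)$ and represent these functionals by their nonnegative integrable densities; $p^+$ and $p^-$ are then the projections onto the supports of the densities of $\omega_+|Z$ and $\omega_-|Z$. Because $\rho_+|Z\le\omega_+|Z$, the density of $\rho_+|Z$ vanishes wherever that of $\omega_+|Z$ does, so the pointwise quotient (extended by $0$ off $p^+$) defines an element $c_+\in Z$ with $0\le c_+\le p^+$ and $\rho_+|Z=c_+(\omega_+|Z)$; equivalently, $c_+$ is the Radon--Nikodym derivative of $\rho_+|Z$ with respect to $\omega_+|Z$. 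The element $c_-$ is obtained symmetrically. Finally, $(p^+-c_+)(\omega_+|Z)$ has density equal to the density of $\omega_+|Z$ minus that of $\rho_+|Z$, and likewise for the minus parts, so the identity $(p^+-c_+)\omega_+|Z=(p^--c_-)\omega_-|Z$ is exactly $\omega_+|Z-\rho_+|Z=\omega_-|Z-\rho_-|Z$, which is a rearrangement of $\rho|Z=\omega|Z$. This completes the proof.

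The only delicate point is the displayed norm identity: the entire reduction rests on the fact that multiplication by a central positive $c$ respects the Jordan decomposition, which is why it is essential that $c$ be central not merely in $A$ but in $A^{**}$, so that the supports of $c\rho_+$ and $c\rho_-$ remain orthogonal. Once this is secured, the remainder is elementary measure theory on the abelian algebra $Z$.
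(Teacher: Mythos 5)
Your proof is correct and takes essentially the same route as the paper's: you convert the hypothesis into $\rho_+(c)+\rho_-(c)=\|c\rho\|\leq\|c\omega\|=\omega_+(c)+\omega_-(c)$, add and subtract this against $\rho|Z=\omega|Z$ to get $\rho_\pm|Z\leq\omega_\pm|Z$, and then realize $Z$ as $L^\infty(\mu)$ and obtain $c_\pm$ as Radon--Nikodym densities, with the final identity being a rearrangement of $\rho|Z=\omega|Z$ using $p^\pm\omega_\pm|Z=\omega_\pm|Z$. The only difference is that you supply a detailed justification (orthogonality of the supports of $c\rho_+$ and $c\rho_-$ in $A^{**}$) for the norm identity that the paper simply cites as well known.
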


\begin{proof}For each $c\in Z_+$ and $\theta\in(A^{\sharp})_+$ we have that $\|c\theta\|=(c\theta)(1)=\theta(c)$ and it is also well known that for each hermitian functional $\sigma$ the equality $\|\sigma\|=\sigma_+(1)+\sigma_-(1)=\|\sigma_+\|+\|\sigma_-\|$ holds, hence 
$$\rho_+(c)+\rho_-(c)=\|c\rho\|\leq\|c\omega\|=\omega_+(c)+\omega_-(c),$$
$$\rho_+(c)-\rho_-(c)=\rho(c)=\omega(c)=\omega_+(c)-\omega_-(c).$$ Adding and subtracting these two relations we find that
$\rho_+(c)\leq\omega_+(c)$ and $\rho_-(c)\leq\omega_-(c)$ for all $c\in Z_+$.
If $Z$, $\omega$, $\rho$, $p^+$ and $p^-$ are as in the second part of the lemma, we may regard $Z$ as $L^{\infty}(\mu)$ for some positive measure $\mu$ and then the existence of elements $c_+$ and $c_-$ in $Z$ satisfying $0\leq c_+\leq p_+$, $0\leq c_-\leq p^-$ and $\rho_+|Z=c_+\omega_+|Z$, $\rho_-|Z=c_-\omega_-|Z$  follows easily, so we will verify here only the last equality in the lemma. The condition $\rho|Z=\omega|Z$ can be written as $(c_+\omega_+-c_-\omega_-)|Z=(\omega_+-\omega_-)|Z$, hence $(1-c_+)\omega_+|Z=(1-c_-)\omega_-|Z$. But $\omega_+=p^+\omega_+$ and $\omega_-=p^-\omega_-$, since $p^+$ and $p^-$ are the support projections of $\omega_+|Z$ and $\omega_-|Z$, hence the required equality follows.
\end{proof}

By \cite{Ha} or \cite{SZ} each positive functional $\omega$ on $\crr$, such that $\omega|\cz$ is weak* continuous, can be uniquely expressed as 
\begin{equation}\label{00}\omega=(\omega|\cz)\circ\omega_{\cz},\end{equation} where $\omega_{\cz}$ is a (completely) positive $\cz$-module map from $\crr$ to $\cz$ such that $\oz(1)$ is the support projection $q\in\cz$ of $\omega|{\cz}$.  If $\omega$ is weak* continuous, then so is also $\oz$. {\em Observe that the support projections of $\omega$ and $\oz$ coincide, if $\omega$ is normal.} (Indeed, for each projection $e\in\crr$ we have $0\leq\oz(e)\leq\oz(1)=q$, hence $\omega(e)=(\omega|\cz)(\oz(e))=0$ if and only if $\oz(e)=0$ since $q$ is the support projection of $\omega|\cz$.)

\begin{theorem}\label{thvn}Let $\omega,\rho$ be normal hermitian functionals on $\crr$. There exists a normal unital completely positive map $\psi:\crr\to\crr$ in the point-weak* closure of $\er$ satisfying $\psi(1)=1$ and $\psi_{\sharp}(\omega)=\rho$ if and only if 
\begin{equation}\label{10}\rho|\cz=\omega|\cz\ \ \mbox{and}\ \ \|c\rho\|\leq\|c\omega\|\ \ \forall c\in\cz_+.\end{equation}
Under this condition $\rho$ is in the norm closure of $\omega\circ\er$. 
\end{theorem}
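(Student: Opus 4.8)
The plan is to prove both implications, the forward one being routine and the reverse requiring an explicit construction. For \emph{necessity}, I would first observe that every $\phi\in\er$ fixes $\cz$ pointwise (since $\sum a_i^*ca_i=c\sum a_i^*a_i=c$ for $c\in\cz$) and is a $\cz$-module map; both properties survive point-weak* limits, so any $\psi$ in the point-weak* closure of $\er$ satisfies $\psi|\cz=\mathrm{id}$ and $\psi(cx)=c\psi(x)$. Consequently $\rho|\cz=(\omega\circ\psi)|\cz=\omega|\cz$, and for $c\in\cz_+$ the module property gives $c\rho=\psi_\sharp(c\omega)$, whence $\|c\rho\|\le\|\psi_\sharp\|\,\|c\omega\|\le\|c\omega\|$ because $\psi$ is unital completely positive.

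For \emph{sufficiency}, assume (\ref{10}) and invoke Lemma \ref{le00} to produce $c_+,c_-\in\cz$ with $\rho_\pm|\cz=c_\pm(\omega_\pm|\cz)$ and the crucial compatibility relation $(p^+-c_+)\omega_+|\cz=(p^--c_-)\omega_-|\cz$. Let $e_+,e_-$ be the (orthogonal) support projections of $\omega_+,\omega_-$ in $\crr$ and $e_0=1-e_+-e_-$. Writing $(\rho_\pm)_\cz:\crr\to\cz$ for the center-valued maps attached to $\rho_\pm$ by (\ref{00}), I would set
$$M(x)=c_+(\rho_+)_\cz(x)\,e_++c_-(\rho_-)_\cz(x)\,e_-.$$
Using $\omega_\pm(ze_\pm)=(\omega_\pm|\cz)(z)$ for central $z$, the vanishing of the cross terms $\omega_\pm(ze_\mp)=0$ (since $e_+e_-=0$), and $\rho_\pm|\cz=c_\pm(\omega_\pm|\cz)$, one checks $\omega_+\circ M=\rho_+$ and $\omega_-\circ M=\rho_-$, so $\omega\circ M=\rho$; moreover $M(1)=c_+e_++c_-e_-$.

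It remains to correct $M$ to a unital map without disturbing $\omega\circ M$. Put $d=1-M(1)=(1-c_+)e_++(1-c_-)e_-+e_0\ge0$ and $N(x)=d^{1/2}\eta(x)d^{1/2}$, where $\eta:\crr\to\cz$ is a normal center-valued state (such $\eta$ exists: patch together the maps $\oz$ of (\ref{00}) for a maximal family of normal states with orthogonal central supports, so that $\eta(1)=1$). Then $N(1)=d$, so $\psi:=M+N$ is unital. The key point is that $\omega_+\circ N$ and $\omega_-\circ N$ both equal $\lambda\circ\eta$, where $\lambda=(p^+-c_+)\omega_+|\cz=(p^--c_-)\omega_-|\cz$ by the compatibility relation; hence $\omega\circ N=0$ and $\omega\circ\psi=\rho$. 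Since $\psi$ is normal, unital, completely positive, a $\cz$-module map, and takes values in the \emph{abelian} (hence injective) von Neumann algebra $\ca$ generated by $\cz$ together with $e_+,e_-$, Theorem \ref{th} places $\psi$ in the point-weak* closure of $\er$, as required.

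Finally, to obtain $\rho$ in the \emph{norm} closure of $\omega\circ\er$, I would choose $\phi_i\in\er$ with $\phi_i\to\psi$ point-weak*; normality of $\omega$ then yields $\omega\circ\phi_i\to\omega\circ\psi=\rho$ in $\sigma(\pdr,\crr)$, i.e. weakly in $\pdr$. As $\er$ is convex, so is $\omega\circ\er$, and Mazur's theorem identifies its weak and norm closures, giving the claim. The main obstacle is precisely the hermitian construction of $\psi$: splitting $\rho$ along the orthogonal supports $e_\pm$ of $\omega_\pm$ is natural, but arranging the unital correction $N$ to be annihilated by $\omega$ is delicate, and it is exactly here that the compatibility relation of Lemma \ref{le00} (together with the existence of a normal center-valued state) becomes indispensable.
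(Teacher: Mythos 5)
Your proof is correct, and its overall architecture matches the paper's: necessity from the $\cz$-module and contractivity properties of maps in $\er$, then Lemma \ref{le00}, an explicit normal unital completely positive $\cz$-module map with range in an abelian (hence injective) von Neumann subalgebra containing $\cz$, Theorem \ref{th}, and finally the identification of the weak and norm closures of the convex set $\omega\circ\er$. Where you genuinely diverge is in the interpolating map itself, which is exactly the step the paper flags as delicate. The paper uses the nested, asymmetric map $\psi=c_+p_+\rzp+(1-c_+p_+)\left(\rzm+(1-p^-)\theta\right)$, in which $c_-$ never appears explicitly, and verifies $\omega\circ\psi=\rho$ by computing the compositions $\ozp\circ\psi$ and $\ozm\circ\psi$ via the identity $f\circ(ag)=f(a)g$ together with $p^+\rzp=\rzp$ and $p^-\rzm=\rzm$. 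Your map $\psi(x)=c_+\rzp(x)e_++c_-\rzm(x)e_-+\eta(x)\left(1-c_+e_+-c_-e_-\right)$ (note $d^{1/2}\eta(x)d^{1/2}=\eta(x)d$ since $\eta(x)\in\cz$) is instead a symmetric three-block construction along $e_+,e_-,e_0$, verified by direct evaluation using only $\omega_\pm(ze_\pm)=(\omega_\pm|\cz)(z)$ and the orthogonality $e_+e_-=0$ of the supports of the Jordan parts of a normal hermitian functional, with the compatibility relation of Lemma \ref{le00} entering exactly once, to force $\omega\circ N=0$; this buys a more transparent verification, free of the composition bookkeeping with $\ozp,\ozm$. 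Two small points you should make explicit: first, $M(1)=c_+e_++c_-e_-$ requires $c_+\rzp(1)=c_+$ and $c_-\rzm(1)=c_-$, which holds because $\rzp(1)$ is the support projection of $\rho_+|\cz=c_+\omega_+|\cz$, and (using $c_+\leq p^+$) this is precisely the support projection of $c_+$ in $\cz$, so multiplying by it fixes $c_+$; second, $\omega_+(zd)=(\omega_+|\cz)((1-c_+)z)$ equals $\left((p^+-c_+)\omega_+|\cz\right)(z)$ only after discarding $(1-p^+)z$ against the support $p^+$ of $\omega_+|\cz$, a one-line remark worth inserting. Your construction of $\eta$ by patching the maps $\oz$ of (\ref{00}) over a maximal family of normal states with pairwise orthogonal central supports is sound and replaces the paper's appeal to $\cz^{\prime}$ being of type I; the concluding Mazur argument is the paper's Hahn--Banach step in different words.
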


\begin{proof}Since maps in $\er$ are unital and completely positive, they are also completely contractive. Each map in $\er$ is of the form  $\psi(x)=\sum_{i=1}^na_i^*xa_i$, where $a_i\in\crr$ and $\sum_{i=1}^na_i^*a_i=1$, hence weak* continuous and the corresponding map $\psi_{\sharp}$ on the predual $\pdr$ of $\crr$ is given by
$\psi_{\sharp}(\omega)=\sum_{i=1}^na_i\omega a_i^*$ and is a $\cz$-module map with $\|\psi_{\sharp}\|=\|\psi\|=1$. Hence $\|c\psi_{\sharp}(\omega)\|=\|\psi_{\sharp}(c\omega)\|\leq\|c\omega\|$ for each $c\in\cz$. This means that the inequality $\|(c \omega)\circ\psi\|\leq \|c\omega\|$ holds for all $\psi\in\er$, hence also for all $\psi$ in the point-weak* closure of $\er$ since $c\omega$ is weak* continuous.  If $\psi$ is a weak* continuous such map and $\rho=\psi_{\sharp}(\omega)$, then $\|c\rho\|=\|(c\omega)\circ\psi\|\leq\|c\omega\|$. Further, since $\psi|\cz={\rm id}$ for each such map $\psi$, it follows that $\rho|\cz=\omega|\cz$ for each $\rho\in\overline{\omega\circ\er}$. 

Assume now that the condition (\ref{10}) holds.  Decompose each of the functionals $\omega_+,\omega_-,\rho_+,\rho_-$ as described in (\ref{00}), so that
$$\omega=(\omega_+|\cz)\circ\ozp-(\omega_-|\cz)\circ\ozm\ \mbox{and}\ \rho=(\rho_+|\cz)\circ\rzp-(\rho_-|\cz)\circ\rzm,$$
where $\rzp,\rzm,\ozp,\ozm$ are $\cz$-module homomorphisms from $\crr$ to $\cz$ such that $p^+:=\ozp(1)$ and $p^-:=\ozm(1)$ are the support projections of $\omega_+|\cz$ and $\omega_-|\cz$. Let $p_+$ and $p_-$ be the support projections of $\omega_+$ and $\omega_-$. Observe that $p_+\leq p^+$ and $p_-\leq p^-$. (Namely, $\omega_+(1-p^+)=(\omega_+|\cz)(1-p^+)=0$ implies that $1-p^+\leq1-p_+$, hence $p_+\leq p^+$.)  By Lemma \ref{le00} there exists $c_+, c_-\in\cz$ such that  $0\leq c_+\leq p^+$, $0\leq c_-\leq p^-$,  
\begin{equation}\label{55-}\rho_+|\cz=c_+\omega_+|\cz,\ \rho_-|\cz=c_-\omega_-|\cz\ \mbox{and}\ (p^+-c_+)\omega_+|\cz=(p^--c_-)\omega_-|\cz.\end{equation} When we first tried to find a map $\psi$ satisfying the requirements of the theorem to be of the form $\psi=a\rzp+b\rzm$, where $a,b\in\crr_+$, we found that it is not always possible to simultaneously satisfy the conditions $\psi(1)=1$ and $\omega\circ\psi=\rho$ by maps of such a form. But after several attempts  we arrived to the following map: 
\begin{equation}\label{11}\psi=c_+p_+\rzp+(1-c_+p_+)(\rzm+(1-p^-)\theta).\end{equation}
Here  $\theta$ is any fixed normal positive unital $\cz$-module map from $\crr$ to $\cz$. (Such a map exists even on $\cz^{\prime}\supseteq\crr$ since $\cz^{\prime}$ is of type $I$, hence isomorphic to a direct sum of matrix algebras of the form ${\rm M}_n(\cz)$, where $n$ can be infinite.)  This map $\psi$ is positive, weak* continuous, $\cz$-module map, with the range contained in the commutative C$^*$-algebra generated by $\cz\cup\{p_+\}$, hence completely positive. We can immediately verify that $\psi$ is also unital:
$$\psi(1)=c_+p_+\rzp(1)+(1-c_+p_+)(\rzm(1)+1-p^-)=c_+p_++(1-c_+p_+)(p^-+1-p^-)=1.$$

Now we are going to compute 
\begin{equation}\label{55}\psi_{\sharp}(\omega)=\omega\circ\psi=(\omega_+|\cz)\circ\ozp\circ\psi-(\omega_-|\cz)\circ\ozm\circ\psi.\end{equation}
For this, first observe that if $f,g:\crr\to\cz$ are $\cz$-module maps and $a\in\crr$, then $(f\circ (ag))(x)=f(ag(x))=f(a)g(x)=(f(a)g)(x)$, that is, $f\circ (ag)=f(a)g$. Note also that $p^+\rzp=\rzp$ and $p^-\rzm=\rzm$ since Lemma \ref{le00} implies that the support projection of $\rho_+|\cz$ is dominated by the support projection of $\omega_+|\cz$ and similarly for $\rho_-|\cz$ and $\omega_-|\cz$. From the definition (\ref{11}) of $\psi$ and using that  $\ozp$ and $\ozm$ are $\cz$-module maps with  ranges contained in $\cz$ and mutually orthogonal support projections $p_+$ and $p_-$ (which are just the support projections of $\omega_+$ and $\omega_-$, respectively) we now compute 
\begin{equation}\label{56}\ozp\circ\psi=\ozp(c_+p_+)\rzp+\ozp(1-c_+p_+)(\rzm+(1-p^-)\theta)\end{equation}$$=c_+\rzp+(p^+-c_+)(\rzm+(1-p^-)\theta)$$
and similarly
\begin{equation}\label{57}\ozm\circ\psi=\ozm(1-c_+p_+)\rzm=p^-\rzm=\rzm.\end{equation}
From  (\ref{55}), (\ref{56}) and (\ref{57}) we have, using also (\ref{55-}) and (\ref{11}),
$$\omega\circ\psi=(\omega_+|\cz)\circ[c_+\rzp+(p^+-c_+)(\rzm+(1-p^-)\theta)]-(\omega_-|\cz)\circ\rzm$$
$$=(c_+\omega_+|\cz)\circ\rzp+[(p^+-c_+)\omega_+|\cz]\circ\rzm+[(p^+-c_+)(1-p^-)\omega_+|\cz]\circ\theta-(\omega_-|\cz)\circ\rzm$$
$$=(\rho_+|\cz)\circ\rzp+[(p^--c_-)\omega_-|\cz]\circ\rzm+[(1-p^-)(p^--c_-)p^-\omega_-|\cz]\circ\theta-(p^-\omega_-|\cz)\circ\rzm$$
$$=\rho_+-(c_-\omega_-|\cz)\circ\rzm=\rho_+-(\rho_-|\cz)\circ\rzm=\rho_+-\rho_-=\rho.$$

It follows from Theorem \ref{th} that $\psi$ is in the point-weak* closure of $\er$. Thus $\rho=\omega\circ\psi$ is in the weak closure of the convex set $\omega\circ\er$, which is the same as the norm closure by the Hahn-Banach theorem and the fact that $\crr$ is the dual of $\pdr$.
\end{proof}

When $\omega$ and $\rho$ are states, Theorem \ref{thvn} simplifies to the following corollary:

\begin{co}\label{co0}Let $\omega$ and $\rho$ be normal states on $\crr$. There exists a normal unital completely positive map $\psi$ in the point-weak* closure of $\er$ satisfying $\psi_{\sharp}(\omega)=\rho$ if and only if $\rho|\cz=\omega|\cz$. This condition is satisfied if and only if $\|c\rho\|\leq\|c\omega\|$ for all $c\in\cz_+$.
\end{co}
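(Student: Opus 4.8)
The plan is to obtain the corollary as an immediate consequence of Theorem \ref{thvn}, the only genuinely new point being that for \emph{states} the two clauses of condition (\ref{10}) are equivalent to one another, so that either one separately characterizes the existence of $\psi$. First I would record the elementary identity already used in the proof of Lemma \ref{le00}: for a positive functional $\theta$ on $\crr$ and a positive central element $c$ one has $\|c\theta\|=(c\theta)(1)=\theta(c)$. Applying this to the states $\omega$ and $\rho$ gives $\|c\omega\|=\omega(c)$ and $\|c\rho\|=\rho(c)$ for every $c\in\cz_+$, which reduces both conditions in (\ref{10}) to statements about the values of $\omega$ and $\rho$ on $\cz$.

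The forward implication is then trivial: if $\rho|\cz=\omega|\cz$, then $\|c\rho\|=\rho(c)=\omega(c)=\|c\omega\|$ for every $c\in\cz_+$, so in particular $\|c\rho\|\le\|c\omega\|$, and the full condition (\ref{10}) holds. For the converse I would exploit that $\omega$ and $\rho$ are \emph{normalized}. Suppose $\|c\rho\|\le\|c\omega\|$, equivalently $\rho(c)\le\omega(c)$, for every $c\in\cz_+$. Given a projection $e\in\cz$, applying this inequality to both $e$ and its complement $1-e\in\cz_+$ and using $\rho(1)=\omega(1)=1$ yields $\rho(e)\le\omega(e)$ and $\rho(1-e)\le\omega(1-e)$, whence $\rho(e)\ge\omega(e)$ as well, so $\rho(e)=\omega(e)$. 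Since $\cz$ is a von Neumann algebra, the linear span of its projections is norm-dense in $\cz$, and as $\omega$ and $\rho$ are continuous they agree on all of $\cz$; thus $\rho|\cz=\omega|\cz$, and again the full condition (\ref{10}) holds.

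With the equivalence of the two clauses in hand, the corollary follows directly from Theorem \ref{thvn}: a normal unital completely positive $\psi$ in the point-weak* closure of $\er$ with $\psi_{\sharp}(\omega)=\rho$ exists precisely when (\ref{10}) holds, and we have just seen that for states this is the same as $\rho|\cz=\omega|\cz$ and also the same as $\|c\rho\|\le\|c\omega\|$ for all $c\in\cz_+$. One checks in passing that $\psi_{\sharp}(\omega)=\omega\circ\psi$ is automatically a state when $\omega$ is, since $\psi$ is unital and positive, so the statement is internally consistent. I expect no real obstacle here: the heavy lifting—the explicit construction of $\psi$ in (\ref{11}) and its approximation by maps in $\er$—is already carried out in Theorem \ref{thvn}, and the only new ingredient is the complementary-projection argument for the reverse implication, which uses nothing beyond normalization and continuity.
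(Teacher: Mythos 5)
Your proposal is correct and follows essentially the same route as the paper: reduce to Theorem \ref{thvn} via the identity $\|c\theta\|=\theta(c)$ for positive $\theta$ and $c\in\cz_+$, and use the complementation trick together with $\rho(1)=\omega(1)$ to upgrade the norm inequality to equality on $\cz$. The only (cosmetic) difference is that the paper applies the trick directly to $1-c$ for arbitrary positive contractions $c\in\cz$, which already span $\cz$ linearly, whereas you restrict to projections $e$ and $1-e$ and then invoke norm-density of the span of projections in the von Neumann algebra $\cz$ — an equally valid but slightly longer final step.
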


\begin{proof}By Theorem \ref{thvn} we only need to verify that the condition $\rho|\cz=\omega|\cz$ implies that $\|c\rho\|\leq\|c\omega\|$ for all $c\in\cz_+$ and conversely. Since $\omega$ and $\rho$ are positive, we have $\|c\rho\|=(c\rho)(1)=\rho(c)$ and $\omega(c)=\|c\omega\|$ for all $c\in\cz_+$. If $\|c\rho\|\leq\|c\omega\|$ for all $c\in\cz_+$, then $\rho(c)\leq\omega(c)$. Applying this to $1-c$ instead of $c$, where $0\leq c\leq1$, it follows that $\rho(c)=\omega(c)$ for all such $c$. But such elements span $\cz$, hence it follows that $\rho|\cz=\omega|\cz$ if and only if $\|c\rho\|\leq\|c\omega\|$ for all $c\in\cz_+$.
\end{proof}

It is well-known that on $\crr=\bh$ all normal completely positive unital maps are of the form 
\begin{equation}\label{-1}\phi(x)=\sum_{j\in \bj}a_j^*xa_j\ \ (x\in\br),
\end{equation}
where $\bj$ is some set of indexes and $a_j\in\crr$ are such that $\sum_{j\in\bj}a_j^*a_j=1$ with the convergence in the strong operator topology. Maps on $\bh$ of the form (\ref{-1}) are called {\em quantum channels} and we will use the same name for maps of such a form on a general von Neumann algebra $\crr$. It is well-known that on a general von Neumann algebra not all unital normal completely positive maps are of the form (\ref{-1}), so we still have to answer the following question:
If $\omega$ and $\rho$ are normal states on a von Neumann algebra $\crr$, when does there exist a quntum channel $\phi$ on $\crr$ such that $\omega\circ\phi=\rho?$

\begin{theorem}\label{pr-1}For normal states $\omega$ and $\rho$ on $\crr$ the following statements are equivalent:

(i) There exists a quantum channel $\phi$ on $\crr$ such that $\omega\circ\phi=\rho$.

(ii) For every faithful normal representation $\pi$ of $\crr$ on a Hilbert space $\h_{\pi}$ and any normal state $\tom$ on $\bhp$ that extends $\omega\circ\pi^{-1}$ there exists a normal state $\tr$ on $\bhp$ that extends $\rho\circ\pi^{-1}$ such that $\tom|\pi(\crr)^{\prime}=\tr|\pi(\crr)^{\prime}$.

(iii) For some faithful normal representation of $\crr$ on a Hilbert space $\h$, such that $\omega$ is the restriction to $\crr$ of a vector state $\tom$ on $\bh$, there exists  a normal state $\tr$ on $\bh$  such that $\tr|\crr=\rho$ and $\tr|\crr^{\prime}=\tom|\crr^{\prime}$.

(iv) Let $\pi_{\omega}$ be the GNS representation of $\crr$ engendered by $\omega$ on a Hilbert space $\h_{\omega}$ and let  $\xi_{\omega}$ be the corresponding cyclic vector. The state $\rho$ annihilates the kernel of $\pi_{\omega}$ and there exists a normal state $\tr$ on ${\rm B}(\h_{\omega})$ such that $\tr|\pi_{\omega}(\crr)$ is the state induced by $\rho$ on $\pi_{\omega}(\crr)\cong\crr/\ker\pi_{\omega}$ and $\tr|\pi_{\omega}(\crr)^{\prime}=\tom|\pi_{\omega}(\crr)^{\prime}$, where $\tom$ is the vector state $x\mapsto\langle x\xi_{\omega},\xi_{\omega}\rangle$ on ${\rm B}(\h_{\omega})$.
\end{theorem}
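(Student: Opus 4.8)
The plan is to prove the cyclic chain of implications $(i)\Rightarrow(ii)\Rightarrow(iii)\Rightarrow(iv)\Rightarrow(i)$, with the genuinely new content lying in the last step. The implication $(i)\Rightarrow(ii)$ is essentially the construction already sketched in the introduction: given a quantum channel $\phi(x)=\sum_j a_j^*xa_j$ on $\crr$ with $a_j\in\crr$, transport it through $\pi$ to the channel $\tilde\phi$ on $\bhp$ defined by the same formula (now viewed with $\pi(a_j)\in\pi(\crr)\subseteq\bhp$), and set $\tr=\tom\circ\tilde\phi$. Since each $\pi(a_j)$ commutes with $\pi(\crr)^{\prime}$, the map $\tilde\phi$ fixes $\pi(\crr)^{\prime}$ pointwise, so $\tr$ and $\tom$ agree there; and $\tr|\pi(\crr)$ computes to $\rho\circ\pi^{-1}$ by construction. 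The implications $(ii)\Rightarrow(iii)$ and $(ii)\Rightarrow(iv)$ are specializations: for $(iii)$ realize $\omega$ as a vector state in some faithful normal representation (for instance amplify the GNS representation of $\omega$ so that $\omega$ becomes a vector state, which is always possible) and invoke $(ii)$; for $(iv)$ apply $(ii)$ to the GNS representation $\pi_\omega$, noting that $\tom=\langle\,\cdot\,\xi_\omega,\xi_\omega\rangle$ extends $\omega\circ\pi_\omega^{-1}$ and that $\rho$ must annihilate $\ker\pi_\omega$ because $\rho\circ\pi_\omega^{-1}$ is well defined on $\pi_\omega(\crr)$.

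The crux is $(iv)\Rightarrow(i)$, where one must manufacture an actual quantum channel on $\crr$ out of the abstract agreement of states on the commutant. The strategy is to pass to the normal state $\tr$ on $\bh_\omega$ provided by $(iv)$ and represent it by a density operator. Writing $\cn=\pi_\omega(\crr)$ and $\cn^{\prime}$ for its commutant, the key structural fact is that $\xi_\omega$ is cyclic for $\cn$, hence separating for $\cn^{\prime}$. The plan is to use the condition $\tr|\cn^{\prime}=\tom|\cn^{\prime}$ together with the separating property to produce vectors implementing $\tr$ on $\cn$: concretely, one expects to find a sequence (or family) $\eta_j\in\h_\omega$ with $\tr(x)=\sum_j\langle x\eta_j,\eta_j\rangle$ for $x\in\cn$, and then to exploit cyclicity of $\xi_\omega$ to write each $\eta_j$ as a strong limit of $\pi_\omega(a)\xi_\omega$, ultimately defining operators $a_j\in\crr$ so that $\phi(x)=\sum_j a_j^*xa_j$ satisfies $\omega\circ\phi=\rho$.

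The main obstacle I anticipate is twofold. First, the agreement $\tr|\cn^{\prime}=\tom|\cn^{\prime}$ must be converted into an operator-theoretic statement; the natural tool is the standard form of the von Neumann algebra $\cn$, using the fact that two normal states agreeing on $\cn^{\prime}$ are related by a partial isometry (or unitary) in $\cn$ acting on their implementing vectors — this is where Tomita–Takesaki theory or at least the comparison theory of vectors in standard form enters. Second, one must guarantee that the resulting $a_j$ genuinely lie in $\crr$ (equivalently in $\cn$) and that $\sum_j a_j^*a_j=1$ in the strong operator topology, so that $\phi$ is an honest unital quantum channel rather than merely a completely positive contraction; controlling this normalization, and handling the possibly infinite index set with strong-operator convergence, is the delicate part. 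I would carry this out by first reducing to the case where $\tom$ is the vector state at $\xi_\omega$ (given in $(iv)$), writing the density operator of $\tr$ in spectral form $\sum_j\lambda_j\,|\zeta_j\rangle\langle\zeta_j|$, and then using the coincidence on $\cn^{\prime}$ to align these $\zeta_j$ with $\xi_\omega$ via elements of $\cn$, extracting the $a_j$ from that alignment and verifying the normalization from $\tr(1)=1=\tom(1)$.
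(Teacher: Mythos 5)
Your easy implications are fine --- (i)$\Rightarrow$(ii) is exactly the paper's argument --- but the proposal has genuine gaps, starting with a structural one: you announce the cycle (i)$\Rightarrow$(ii)$\Rightarrow$(iii)$\Rightarrow$(iv)$\Rightarrow$(i) but then describe proving (ii)$\Rightarrow$(iii) and (ii)$\Rightarrow$(iv), which leaves (iii) with no outgoing implication, so the equivalence of (iii) with the other statements is never established; the paper proves (iii)$\Rightarrow$(i) directly (your crux construction, once repaired, works equally well from (iii), but you must say so). Note also that (ii) cannot simply be ``applied to $\pi_\omega$,'' since $\pi_\omega$ need not be faithful, and your justification that $\rho$ annihilates $\ker\pi_\omega$ (``because $\rho\circ\pi_\omega^{-1}$ is well defined'') is circular: that annihilation is part of what (iv) asserts and must be derived (e.g.\ from (i), using that a quantum channel with coefficients in $\crr$ is a module map over the center and that $\ker\pi_\omega=(1-c)\crr$ for the central carrier $c$ of the support of $\omega$). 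Likewise ``amplify the GNS representation'' does not produce a faithful representation when $\pi_\omega$ has a kernel; take the standard form, or $\pi_\omega\oplus\pi_f$ with $\pi_f$ faithful normal, as the paper does.

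Second, and more seriously, the crux step as sketched would fail. Aligning each spectral vector $\zeta_j$ of the density operator of $\tr$ with $\xi_\omega$ \emph{individually} via standard-form comparison is impossible: the individual vector states $\langle\,\cdot\;\zeta_j,\zeta_j\rangle$ restricted to the commutant bear no relation to $\tom$ --- only their weighted \emph{sum} agrees with $\tom$ there, and vector-comparison theory requires equality of the states on the commutant. Similarly, writing each $\eta_j$ as a strong limit of $\pi_\omega(a)\xi_\omega$ invokes the wrong cyclicity (what matters is $[\crr^{\prime}\xi]$, not $[\crr\xi]$) and in any case yields only vector approximation, not operators $a_j$ with $a_j\xi=\eta_j$, let alone the bound $\sum_j a_j^*a_j\leq1$. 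The paper's mechanism treats the whole family at once: writing $\tr(x)=\langle x^{(\infty)}\eta,\eta\rangle$ with $\eta\in\h^{\infty}$, the hypothesis $\tom|\crr^{\prime}=\tr|\crr^{\prime}$ applied to $x^*x$ gives $\|x\xi\|^2=\|x^{(\infty)}\eta\|^2$ for all $x\in\crr^{\prime}$, whence a single partial isometry $u\colon[\crr^{\prime}\xi]\to\h^{\infty}$ with $u\xi=\eta$ intertwining the identity representation of $\crr^{\prime}$ with its amplification; such a $u$ is a column $(u_j)$ with $u_j\in\crr^{\prime\prime}=\crr$, and the normalization $\sum_j u_j^*u_j=u^*u\leq1$ comes for free. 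Finally, $\tr(1)=\tom(1)=1$ gives only $\omega(u^*u)=1$, i.e.\ $u^*u\geq p$ for the support projection $p$ of $\omega$ --- not unitality; the paper repairs this by replacing $\psi(r)=u^*r^{(\infty)}u$ with $\phi(r)=p\psi(r)p+(1-p)r(1-p)$, which is unital, of Kraus form with coefficients $u_jp$ and $1-p$, and still satisfies $\omega\circ\phi=\rho$. You flag this normalization as ``delicate'' but provide no mechanism, and in (iv)$\Rightarrow$(i) one additionally needs the decomposition $\crr\cong\pi_\omega(\crr)\oplus\ker\pi_\omega$ to pull the channel back to $\crr$; both are essential missing steps.
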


\begin{proof}(i)$\Rightarrow$(ii) If $\rho=\omega\circ\phi$, where $\phi$ is of the form (\ref{-1}), then let $\tom$ be any state on $\bhp$ extending $\omega\circ\pi^{-1}$, let $\tilde{\phi}$ be the map on $\bhp$ defined by 
$\tilde{\phi}(x)=\sum_{j\in\bj}\pi(a_j^*)x\pi(a_j)$  and set $\tr=\tom\circ\tilde\phi$. Then $\tilde{\phi}(x)=x$ for each $x\in\pi(\crr)^{\prime}$, hence $\tr|\pi(\crr)^{\prime}=\tom|\pi(\crr)^{\prime}$. Moreover, $\tr$ extends $\rho\circ\pi^{-1}$.

(ii)$\Rightarrow$(iii) Take for $\pi$ a faithful normal representation on a Hilbert space $\h$ such that $\omega$ is the restriction of a vector state  $\tom$  on $\bh$. (For example, $\crr$ may be in the standard form \cite[Chapter IX]{T} so that all normal states on $\crr$ and $\crr^{\prime}$ are vector states.) For simplicity of notation, we may assume that $\crr\subseteq\bh$, that is, $\pi={\rm id}$. Then with $\tr$  as in (ii), we have  $\tr|\crr=\rho$ and $\tr|\crr^{\prime}=\tom|\crr^{\prime}$. 

(iii)$\Rightarrow$(i) Assume that $\crr$ is represented faithfully on a Hilbert space $\h$ such that $\omega$ is the restriction of a vector state $\tom$ on $\bh$ and that  $\tr$ is a normal state on $\bh$ such that  $\tr|\crr=\rho$ and $\tr|\crr^{\prime}=\tom|\crr^{\prime}$. Let $\xi\in\h$ be such that $\tom(x)=\langle x\xi,\xi\rangle$ ($x\in\bh$). As a normal state, $\tr$ is of the form
$$\tr(x)=\langle x^{(\infty)}\eta,\eta\rangle\ \ \ (x\in\bh),$$
where $x^{(\infty)}$ denotes the direct sum of countably many copies of $x$ acting on the direct sum $\h^{\infty}$ of countably many copies of $\h$ and $\eta\in\h^{\infty}$.
Now from $\tom(x)=\tr(x)$ for all $x\in\crr^{\prime}$ we have
$$\langle x\xi,\xi\rangle=\langle x^{(\infty)}\eta,\eta\rangle\ \ \ (x\in(\crr^{\prime})).$$
Replacing $x$ by $x^*x$, it follows that there exists an isometry $u:[\crr^{\prime}\xi]\to[(\crr^{\prime})^{(\infty)}\eta]$ such that $u\xi=\eta$ and $uy=y^{(\infty)}u$ for all $y\in\crr^{\prime}$.
This $u$ can be extended to a partial isometry from $\h$ into $\h^{\infty}$, denoted again by $u$, by declaring it to be $0$ on the orthogonal complement of  $[\crr^{\prime}\xi]$ in $\h$. Then $u$ intertwines the identity representation ${\rm id}$ of  $\crr^{\prime}$ and the representation ${\rm id}^{\infty}$ and is therefore a column $(u_j)$, where $u_j\in\crr$.  For $r\in\crr$ we have 
$$\rho(r)=\tr(r)=\langle r^{(\infty)}\eta,\eta\rangle=\langle r^{(\infty)}u\xi,u\xi\rangle=\langle u^*r^{(\infty)}u\xi,\xi\rangle=\omega(u^*r^{(\infty)}u).$$
Thus $\rho=\omega\circ\psi$, where $\psi$ is a map on $\crr$, defined by $\psi(r)=u^*r^{(\infty)}u=\sum_ju_j^*ru_j$. This map $\psi$ is not necessarily unital, but from $$\omega(1)=1=\rho(1)=\omega(\psi(1))=\omega(\sum_ju_j^*u_j)=\omega(u^*u)\ \ \mbox{and}\ \ u^*u\leq1$$ we infer that $1-u^*u\leq 1-p$, where $p$ is the support projection of $\omega$. Hence $p\leq u^*u$ and we may replace $\psi$ by the unital map $\phi$ defined by
$\phi(r)=p\psi(r)p+(1-p)r(1-p)$, which satisfies $\omega\circ\phi=\omega\circ\psi=\rho$ and has the required form:
$$\psi(r)=\sum_jpu_j^*ru_jp+p^{\perp}rp^{\perp},\  \mbox{where}\  \sum_jpu_j^*u_jp+p^{\perp}p^{\perp}=pu^*up+p^{\perp}=1.$$

The equivalence (i)$\Leftrightarrow$(iv) is proved by similar arguments and we will omit the details, just note that $\crr\cong\pi_{\omega}(\crr)\oplus\ker\pi_{\omega}$. \end{proof}

\section{The case of C$^*$-algebras}

In a general C$^*$-algebra $A$ there is usually not enough module homomorphisms of $A$ into its center $Z$  and even if $Z=\bc1$ there can be many ideals in $A$. Functionals on $A$ usually do not preserve ideals, hence can not be  approximated by elementary operators. Therefore we will use for general C$^*$-algebras a  different approach from that in the previous section, not trying to construct an explicit map sending one state to another. For C$^*$-algebras with Hausdorff primitive spectrum the situation nevertheless resembles the one for von Neumann algebras. 

\begin{theorem}\label{H}Let $\omega,\rho$ be hermitian linear functionals on a C$^*$-algebra $A$ with Hausdorff primitive spectrum $\check{A}$ and center $Z$. Then $\rho$ is in the weak* closure $\overline{\omega\circ\ea}$ of the set $\omega\circ\ea$ if and only if the following condition is satisfied:

\smallskip
\noindent(A)\ \  \ \ $\rho|Z=\omega|Z$ and $\|c\rho\|\leq\|c\omega\|$ for each $c\in Z_+$.
\end{theorem}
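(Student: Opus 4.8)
The plan is to prove the two implications separately; necessity of (A) is elementary, while the converse rests on the fibered structure of $A$ over $\check{A}$.

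\emph{Necessity and reduction.} Every $\psi\in\ea$ fixes $Z$ pointwise (for central $c$, $\psi(c)=\sum a_i^*ca_i=c\sum a_i^*a_i=c$) and is a $Z$-module map, so $\psi(cx)=c\psi(x)$. Hence $(\omega\circ\psi)|Z=\omega|Z$, and for $c\in Z_+$ one has $c(\omega\circ\psi)=(c\omega)\circ\psi$, whence $\|c(\omega\circ\psi)\|\le\|c\omega\|$ since a unital completely positive $\psi$ is contractive. As $\sigma\mapsto\sigma|Z$ and $\sigma\mapsto c\sigma$ are weak* continuous and the norm is weak* lower semicontinuous, both conditions in (A) pass to any $\rho\in\overline{\omega\circ\ea}$. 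For the converse, note that $\ea$ (hence $\omega\circ\ea$) is convex, so by Hahn--Banach separation $\rho\in\overline{\omega\circ\ea}$ iff $\rho(a)\le\sup_{\psi\in\ea}\omega(\psi(a))=:N(\omega,a)$ for every self-adjoint $a$; assuming (A), I must prove this inequality. Since $\check A$ is Hausdorff it is $T_1$, so $\mathrm{Prim}(A/P)=\{0\}$ for each primitive $P$ and each fibre is simple; by Dauns--Hofmann $Z\cong C(X)$ with $X=\check A$ compact, $A$ is a $C(X)$-algebra, each $A(t)=A/M_t$ is unital simple, and order is detected fibrewise ($a\le h1$ for $h\in C(X)$ iff $a(t)\le h(t)1$ for all $t$).

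\emph{Fibre computation.} For a unital simple $B$, a hermitian $\theta\in B^{\sharp}$ and self-adjoint $b\in B$ I claim $\sup_{\psi\in E(B)}\theta(\psi(b))=\|\theta_+\|M-\|\theta_-\|m$, where $M=\max\mathrm{sp}(b)$, $m=\min\mathrm{sp}(b)$. The inequality $\le$ is immediate from $m1\le\psi(b)\le M1$ and $\theta=\theta_+-\theta_-$. For $\ge$: simplicity provides $\psi^{\mathrm{top}},\psi^{\mathrm{bot}}\in E(B)$ with $\psi^{\mathrm{top}}(b)\approx M1$ and $\psi^{\mathrm{bot}}(b)\approx m1$ (the hereditary algebra of $(b-M+\epsilon)_+$ generates $B$ as an ideal, so $\sum y_i^*(b-M+\epsilon)_+y_i$ can be normalised to $1$, and symmetrically at $m$). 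Approximating the support projection of $\theta_+$ by $e\in B$, $0\le e\le1$, with $\theta_+(1-e),\theta_-(e)<\epsilon$, the map $\psi(x)=e^{1/2}\psi^{\mathrm{top}}(x)e^{1/2}+(1-e)^{1/2}\psi^{\mathrm{bot}}(x)(1-e)^{1/2}$ lies in $E(B)$ and gives $\theta(\psi(b))\approx M\theta(e)+m\theta(1-e)\approx\|\theta_+\|M-\|\theta_-\|m$.

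\emph{Globalisation.} Disintegrate $\omega$ over $\lambda:=(\omega_++\omega_-)|Z$ as $\omega(x)=\int_X\xi_t(x(t))\,d\lambda$, where the fibre functionals $\xi_t$ have Jordan parts of norms $p=d(\omega_+|Z)/d\lambda$ and $q=d(\omega_-|Z)/d\lambda$ with $p+q=1$. By Lemma \ref{le00}, condition (A) yields $\rho_\pm|Z\le\omega_\pm|Z$, so $|\rho||Z\ll\lambda$ and $\rho=\int_X\eta_t(\cdot(t))\,d\lambda$ with Jordan-part densities $p'\le p$, $q'\le q$; moreover $\rho|Z=\omega|Z$ forces $p-p'=q-q'=:s\ge0$. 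Since each fibre state of $\eta_t$ sees $a(t)$ inside $[m_a(t),M_a(t)]$ (with $M_a(t)=\max\mathrm{sp}(a(t))$, $m_a(t)=\min\mathrm{sp}(a(t))$), I get fibrewise $\eta_t(a)\le p'M_a-q'm_a\le pM_a-qm_a$, the last step being $s(M_a-m_a)\ge0$; integrating, $\rho(a)\le\int_X(pM_a-qm_a)\,d\lambda$. Conversely the fibre computation makes $p(t)M_a(t)-q(t)m_a(t)$ the supremum attached to $\xi_t$, and patching the local maps with a partition of unity $\{e_k\}\subseteq Z$ subordinate to a finite cover of $X$ (the $e_k$ are central, so $\psi=\sum_k e_k^{1/2}\psi^{(k)}e_k^{1/2}\in\ea$ and $\psi(a)=\sum_k e_k\psi^{(k)}(a)$) yields $\psi\in\ea$ with $\omega(\psi(a))\ge\int_X(pM_a-qm_a)\,d\lambda-\epsilon$. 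Combining the two estimates gives $\rho(a)\le N(\omega,a)$.

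\emph{Main obstacle.} The delicate point is this globalisation. The Jordan data $\xi_t$, the densities $p,q$ and the routing elements in $A(t)$ vary only \emph{measurably} in $t$, whereas the partition-of-unity patching needs them (and the fibre maps $\psi^{(k)}$) to vary continuously on the patches. Unlike the state case, where one can enforce the operator inequality $\psi^{(k)}(a)\ge(M_a-\epsilon)1$ fibrewise and then apply any state, here the two Jordan parts pull $\psi(a)$ in opposite directions inside a common fibre, so the estimate is genuinely functional-dependent. I expect to resolve this by a Lusin-type regularisation: restrict to a closed $F\subseteq X$ with $\lambda(X\setminus F)<\delta$ on which all data become continuous (via continuous local sections of the field of fibres), patch over $F$, and absorb the contribution of $X\setminus F$ into the error, which is bounded by $2\|a\|\,\lambda(X\setminus F)$. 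Both halves of (A) are essential: $\rho|Z=\omega|Z$ pins $p-p'=q-q'$, while the central norm inequality (through Lemma \ref{le00}) gives $p'\le p$ and $q'\le q$, and it is exactly the resulting $s(M_a-m_a)\ge0$ that orients the comparison correctly.
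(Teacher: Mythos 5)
Your necessity half is correct, and your fibre computation is sound (Hausdorff implies $T_1$, so the fibres $A/P$ are indeed simple, and the formula $\sup_{\psi}\theta(\psi(b))=M\|\theta_+\|-m\|\theta_-\|$ is right, with the $\psi^{\mathrm{top}},\psi^{\mathrm{bot}}$ construction from simplicity). The genuine gap is exactly where you flag it, and it is not a routine technicality. Two things are missing. First, the disintegration $\omega(x)=\int_X\xi_t(x(t))\,d\lambda$ with \emph{measurable} fibre data is standard only under separability hypotheses (measurable fields / direct integrals); Theorem \ref{H} has no separability assumption, $X=\check{A}$ is merely compact Hausdorff and $\lambda$ a Radon measure, and a lifting-based substitute produces fibre functionals with no usable measurability. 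Second, even granting the disintegration, your patching requires a \emph{single} $\psi\in\ea$ that is near-optimal in $\lambda$-almost every fibre simultaneously, and the optimal fibre map depends on the approximate support $e_t$ of $(\xi_t)_+$, which is a fibre-valued datum. The proposed ``Lusin-type regularisation'' does not cover this: Lusin's theorem upgrades scalar (or separable-metric-valued) measurable functions, not sections $t\mapsto e_t$ of a C$^*$-bundle; you would need a measurable selection theorem plus a continuous-section approximation for the field, neither of which you supply, and neither of which is available in this generality. So the central inequality $N(\omega,a)\geq\int_X(pM_a-qm_a)\,d\lambda-\epsilon$, and with it the whole sufficiency direction, remains unproved.

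The paper avoids all of this by implementing your fibrewise mechanism \emph{once, globally}. After the same Hahn--Banach reduction (with separating element $h$, normalised positive invertible), it chooses a single $a\in A_h$, $-1\leq a\leq1$, with $\omega(a)>\|\omega\|-\varepsilon$, which forces $\omega_+(1-a_+)<\varepsilon$ and $\omega_-(a_+)\leq\varepsilon$: this $a_+$ is a global approximate support of $\omega_+$, found by norm attainment with no disintegration, playing the role of your family $(e_t)$. With $M(t)=\max\sigma(h(t))$, $m(t)=\min\sigma(h(t))$ continuous (Hausdorffness) and hence central (Dauns--Hofmann), the test element $b=Ma_++m(1-a_+)$ satisfies $W(b(t))\subseteq W(h(t))$ in every fibre, so $b\in\overline{\ea(h)}$ by the C$^*$-convexity theorem \cite[4.1]{M} --- note this also removes any need for simplicity of the fibres --- giving $\omega(b)\leq\alpha$; the estimate $\omega(b)\geq\omega_+(M)-\omega_-(m)-2\varepsilon\|M-m\|$, compared with $\rho(h)\leq\rho_+(M)-\rho_-(m)$, yields $(\omega_+-\rho_+)(M-m)<0$, contradicting Lemma \ref{le00}. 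Your identity $\theta(\psi(b))\approx M\|\theta_+\|-m\|\theta_-\|$ is precisely the mechanism behind this $b$, so the intuition is right; the missing idea is to trade the a.e.-fibrewise optimal maps for this one global element, which is what dissolves the measurability obstruction. If you want to salvage your route, prove the lower bound on $N(\omega,a)$ directly from such a global $a_+$ rather than from a disintegration.
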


\begin{proof}To prove the nontrivial direction of the theorem, suppose that the condition (A) is satisfied, but that $\rho\notin\overline{\omega\circ\ea}$. Then by the Hahn-Banach theorem
there exist $h\in A_h$ and $\alpha,\delta\in\br$, $\delta>0$, such that 
\begin{equation}\label{20}\omega(\psi(h))\leq\alpha\ \forall\psi\in\ea\ \ \mbox{and}\ \ \rho(h)\geq\alpha+\delta.\end{equation}
Since $\rho|Z=\omega|Z$, in particular $\rho(1)=\omega(1)$, we may replace $h$ by $h+\gamma1$ (and $\alpha$ with $\alpha+\gamma\omega(1)$) for a sufficiently large $\gamma\in\br$ and thus assume that $h$ is positive in invertible. Given $\varepsilon>0$, let $a\in A_h$ be such that $$-1\leq a\leq1\ \ \mbox{and}\ \ \omega_+(a)-\omega_-(a)=\omega(a)>\|\omega\|-\varepsilon=\omega_+(1)+\omega_-(1)-\varepsilon.$$ 
By a well-known argument, which we now recall, this implies the relations (\ref{21}) below. Namely, from the above we have $\omega_+(1-a)<-\omega_-(1+a)+\varepsilon$ and (since $1-a\geq0$ and $1+a\geq0$) this implies that $\omega_+(1-a)<\varepsilon$ and $\omega_-(1+a)\leq\varepsilon$. Thus $\omega_+(a_+)\geq\omega_+(a)>\omega_+(1)-\varepsilon$ and $\omega_-(a_+)=\omega_-(1+a)-\omega_-(1-a_-)\leq\omega_-(1+a)\leq\varepsilon$. In conclusion \begin{equation}\label{21}\omega_+(1-a_+)<\varepsilon\ \ \mbox{and}\ \ \omega_-(a_+)\leq\varepsilon.\end{equation}
For each $t\in\check{A}$ let $m(t)$ and $M(t)$ be the smallest and the largest point in the spectrum $\sigma(h(t))$ of $h(t)\in A/t$. Since $\check{A}$ is Hausdorff by assumption, the two functions
$M$ and $m$ (given by $M(t)=\|h(t)\|$ and $m(t)=\|h(t)^{-1}\|^{-1}$) are continuous \cite[4.4.5]{Pe} and therefore define elements of the center $Z$ of $A$ by the Dauns-Hoffman theorem. Set
$$b=Ma_++m(1-a_+).$$
For each $t\in\check{A}$ the spectrum of $b(t)$ is $\sigma(m(t)1+(M(t)-m(t))a_+(t))$ and is contained in $m(t)+(M(t)-m(t))[0,1]\subseteq[m(t),M(t)]$ since $\sigma(a_+(t))\subseteq\sigma(a_+)\subseteq[0,1]$.
Thus the numerical range $W(b(t))$ of $b(t)$ (which for normal elements coincides with the convex hull of the spectrum) is contained in $W(h(t))=[m(t),M(t)]$. Therefore by \cite[4.1]{M} $b$ is in the norm closure of the set $\{\psi(h):\, \psi\in\ea\}$, hence
\begin{equation}\label{22}\omega(b)\leq\alpha\end{equation}
by the first relation in (\ref{20}).
On the other hand, we can estimate $\omega(b)$ as
$$\omega(b)=\omega_+(Ma_+)-\omega_-(Ma_+)+\omega_+(m(1-a_+))-\omega_-(m(1-a_+))$$
$$=\omega_+(M)-\omega_-(m)-\omega_+((M-m)(1-a_+))-\omega_-((M-m)a_+)$$
$$\geq\omega_+(M)-\omega_-(m)-\|M-m\|(\omega_+(1-a_+)+\omega_-(a_+))$$ (\mbox{since} $0\leq (M-m)(1-a_+)\leq\|M-m\|(1-a_+)$ and $0\leq (M-m)a_+\leq\|M-m\|a_+$)
$$\geq\omega_+(M)-\omega_-(m)-2\|M-m\|\varepsilon\ \ \mbox{(by \ref{21})}.$$
Thus by (\ref{20}), (\ref{22}) and since $m\leq h\leq M$ implies that $\rho_+(h)\leq \rho_+(M)$ and $\rho_-(h)\geq \rho_-(m)$ we have now
$$\omega_+(M)-\omega_-(m)-2\varepsilon\|M-m\|+\delta\leq\rho(h)=\rho_+(h)-\rho_-(h)\leq\rho_+(M)-\rho_-(m).$$
This can be rewritten as
\begin{equation}\label{201}(\omega_+-\rho_+)(M)\leq(\omega_--\rho_-)(m)+2\varepsilon\|M-m\|-\delta\end{equation}
or (since $\omega|Z=\rho|Z$ implies that $(\omega_--\rho_-)|Z=(\omega_+-\rho_+)|Z$ and since $M,m\in Z$)
$$(\omega_+-\rho_+)(M-m)\leq2\varepsilon\|M-m\|-\delta.$$
Since this holds for all $\varepsilon>0$, by choosing small enough $\varepsilon$, it follows that $(\omega_+-\rho_+)(M-m)<0$.
But, $Z\ni M-m\geq0$ and $\omega_+|Z\geq\rho_+|Z$ by Lemma \ref{le00}, hence $(\omega_+-\rho_+)(M-m)\geq0$, which is a contradiction.
\end{proof}

The following corollary can be proved in the same way as Corollary \ref{co0}, so we will omit the proof.

\begin{co}\label{Hc}If $\omega$ and $\rho$ are states on a C$^*$-algebra $A$ with  Hausdorff primitive spectrum, then  $\rho\in\overline{\omega\circ\ea}$ if and only if $\rho|Z=\omega|Z$.
\end{co}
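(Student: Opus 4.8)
The plan is to reduce Corollary \ref{Hc} to Theorem \ref{H} exactly as Corollary \ref{co0} was reduced to Theorem \ref{thvn}. By Theorem \ref{H}, for hermitian functionals the relation $\rho\in\overline{\omega\circ\ea}$ is equivalent to condition (A), namely $\rho|Z=\omega|Z$ together with $\|c\rho\|\leq\|c\omega\|$ for all $c\in Z_+$. For states the second half of (A) turns out to be redundant, so the whole task is to show that, when $\omega$ and $\rho$ are positive, condition (A) collapses to the single requirement $\rho|Z=\omega|Z$.

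First I would record the elementary computation that for a state $\theta$ and $c\in Z_+$ the functional $c\theta$ is positive, whence $\|c\theta\|=(c\theta)(1)=\theta(c)$; applying this to both $\omega$ and $\rho$ turns the norm inequality $\|c\rho\|\leq\|c\omega\|$ into the scalar inequality $\rho(c)\leq\omega(c)$. The forward implication is then immediate: if $\rho|Z=\omega|Z$ then $\rho(c)=\omega(c)$ for every $c\in Z$, so in particular $\|c\rho\|=\|c\omega\|$ for $c\in Z_+$, and condition (A) holds.

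For the converse I would assume $\rho(c)\leq\omega(c)$ for all $c\in Z_+$ and apply this inequality to $1-c$ in place of $c$ for $0\leq c\leq 1$; using $\rho(1)=\omega(1)=1$ this yields $\rho(c)\geq\omega(c)$, hence $\rho(c)=\omega(c)$ for all such $c$. Since the elements $c$ with $0\leq c\leq 1$ span $Z$, it follows that $\rho|Z=\omega|Z$. Combining the two implications shows that (A) is equivalent to $\rho|Z=\omega|Z$, and the corollary follows from Theorem \ref{H}.

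There is essentially no obstacle here; the only point requiring a moment's care is the identity $\|c\theta\|=\theta(c)$ for positive $c$ and positive $\theta$, which rests on the positivity of $c\theta$ and is precisely the observation already exploited in the proof of Corollary \ref{co0}. This is why the author can legitimately say the corollary is proved in the same way and omit the details.
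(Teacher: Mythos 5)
Your proposal is correct and takes essentially the same approach as the paper: the paper proves Corollary \ref{Hc} by reducing it to Theorem \ref{H} via exactly the computation you give (for a state $\theta$ and $c\in Z_+$ one has $\|c\theta\|=\theta(c)$, and applying the resulting inequality $\rho(c)\leq\omega(c)$ to $1-c$ with $0\leq c\leq1$ forces equality on a spanning set of $Z$), which is the same argument used to deduce Corollary \ref{co0} from Theorem \ref{thvn}. Nothing is missing.
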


Before stating our main result in this section we need a lemma. Recall that a projection $p$ in the center of the universal von Neumann envelope $\crr$ of a C$^*$-algebra $A$ is called {\em open} if there is an ideal $J$ in $A$ such that $\overline{J}=p\crr$, where $\overline{J}$ is the weak* closure of $J$ in $\crr$.

\begin{lemma}\label{leh1}Let $\crr$ be the universal von Neumann envelope of a  C$^*$-algebra $A$ and $\cz$ the center of $\crr$. For each $h\in A_+$ the central carrier $C_h$ of $h$ in $\crr$ can be approximated in norm by linear combinations of open central projections in $\crr$, where the coefficients in each combination are positive.
\end{lemma}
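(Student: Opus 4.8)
The plan is to reduce the statement to an approximation problem for the central carrier $C_h$, and to use the spectral structure of $h$ together with the Dauns–Hoffman correspondence between the center of $\crr$ and bounded Borel (or at least a dense supply of) functions on the primitive spectrum. First I would recall that in the universal von Neumann envelope $\crr$, the weak* closures $\overline{J}$ of ideals $J$ of $A$ are exactly the summands $p\crr$ with $p$ an open central projection, and that these open central projections form a lattice that is closed under suprema and contains enough elements to separate the relevant structure. The central carrier $C_h$ is the smallest central projection $z$ with $zh=h$; equivalently $C_h=\bigvee\{C_{f(h)}:f\}$ over continuous $f$ vanishing at $0$, and $C_h$ is the supremum of the support projections of $h$ in the direct integral fibers.

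The key step is to exhibit $C_h$ as a norm limit of positive combinations $\sum_k\lambda_k p_k$ of open central projections. To do this I would work spectrally: for $t>0$, let $J_t$ denote the ideal of $A$ generated by $(h-t)_+$ (the positive part), and let $p_t=\overline{J_t}$ be the corresponding open central projection. As $t\downarrow 0$ the projections $p_t$ increase, and I expect $\bigvee_{t>0}p_t=C_h$, since the central carrier of $h$ is controlled by the central carriers of the elements $(h-t)_+$, whose ideals exhaust the ideal generated by $h$. The heart of the argument is then a Riemann–Stieltjes type approximation: partition $[0,\|h\|]$ finely and form a positive linear combination $\sum_k \lambda_k\,p_{t_k}$ of the open central projections $p_{t_k}$ with carefully chosen positive weights $\lambda_k$, arranged so that in each fiber $A(t)$ (equivalently, after evaluating on the maximal ideal space of $\cz$) the combination telescopes to something uniformly close to the fiberwise support projection of $h$, which assembles to $C_h$.

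I expect the main obstacle to be controlling the approximation uniformly in norm rather than merely strongly or fiberwise. Pointwise on $\Delta=\widehat{\cz}$ the combination of characteristic-type functions converges to the indicator of $\{C_h\neq 0\}$, but passing from fiberwise control to a genuine norm estimate on $\crr$ requires that the telescoping error be bounded independently of the fiber. The clean way to secure this is to note that each $p_{t_k}$ corresponds, via Dauns–Hoffman, to a central projection whose value in the fiber over $t\in\Delta$ is the support projection of $(h(t)-t_k)_+$, so that $C_h-\sum_k\lambda_k p_{t_k}$ is, in every fiber, a fixed function of $h(t)$ whose sup-norm over the spectrum is governed only by the mesh of the partition. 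Since $\sigma(h(t))\subseteq[0,\|h\|]$ uniformly, the error is then dominated by the oscillation of a single scalar function on $[0,\|h\|]$, which can be made arbitrarily small by refining the partition; this yields the required norm approximation of $C_h$ by positive combinations of open central projections.
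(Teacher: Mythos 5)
There is a genuine gap, and it lies in the very first move: you have misidentified the object $C_h$. You take $C_h$ to be the smallest central \emph{projection} $z$ with $zh=h$. In this paper, however, the central carrier of $h$ is defined (in the proof of the lemma, and this is what Theorem \ref{H1} actually uses) as the infimum of all $c\in\cz$ with $h\leq c$, i.e.\ the positive central \emph{element} whose Gelfand transform on $\Delta=\widehat{\cz}$ is the continuous function $t\mapsto\|h(t)\|$. This distinction is fatal to your argument as written. Under your projection reading the lemma is in fact trivial: the central support projection of $h\in A_+$ is exactly the open projection $p$ with $\overline{AhA}=p\crr$ (any central $q$ with $qh=h$ fixes the ideal generated by $h$, hence its weak* closure, so $q\geq p$), so no approximation scheme is needed at all --- and, worse, your Riemann--Stieltjes sums do \emph{not} converge in norm to that projection: with telescoping weights $\lambda_k$ chosen in the partition intervals, the sums $\sum_k\lambda_k(p_{t_k}-p_{t_{k+1}})$ converge in norm to the function $t\mapsto\|h(t)\|$, which is not a projection unless the fiber norms are locally constant. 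Your fiberwise justification is also incorrect: the fibers $\crr/t\crr$ have trivial center, so a central projection such as $p_{t_k}$ takes only the scalar values $0$ or $1$ in each fiber (namely $1$ precisely on the closure of $\{t:\|(h-t_k)_+(t)\|\neq0\}$); it is \emph{not} ``the support projection of $(h(t)-t_k)_+$'' in the fiber, and the fiberwise support projections of $h(t)$ do not assemble to any central element.

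The ironic part is that your mechanics are essentially the paper's proof of the \emph{correct} statement. The paper sets $z(t)=\|h(t)\|$ (continuous by Glimm's Stone--Weierstrass theorem), shows that for each $r\geq0$ the projection corresponding to the closure of $U_r=\{t: z(t)>r\}$ is open because $U_r=\{t:a(t)\neq0\}$ with $a=(h-r)_+$ (with a short argument identifying this closure projection with the open projection of the ideal generated by $a$, a step you gloss over), slices $[m,M]$ into intervals of length $\varepsilon$, forms $c=\sum_{k}\lambda_k(p_k-p_{k-1})$ with $\lambda_k$ in the $k$-th interval so that $\|z-c\|\leq\varepsilon$, and then uses Abel summation $c=\sum_k(\lambda_k-\lambda_{k+1})p_k+\lambda_np_n$ to exhibit positive coefficients. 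Your level-set ideals $J_{t_k}$ generated by $(h-t_k)_+$, your mesh-controlled telescoping, and your positivity-via-telescoping are exactly this argument; had you aimed it at the central element $z$ rather than at a support projection, it would be correct. As it stands, the proof proves neither your statement (the norm limit is $z$, not a projection) nor, without the reinterpretation, the paper's.
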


\begin{proof}By definition the central carrier $z$ of $h$ is the infimum of all $c$ in $\cz$  such that 
$h\leq c$. If $\Delta$ is the maximal ideal space of $\cz$, then $z$ corresponds (via the Gelfand isomorphism) to the function $\Delta\ni t\mapsto\|h(t)\|$,
where $h(t)$ is the coset of $h$ in $\crr/t\crr$. (This function is continuous by \cite{G}.) Thus we will regard $z$ as a function on $\Delta$. Let $[m,M]$ be an interval containing the range of $z$, where $m\geq0$ and $M=\|h\|=\|z\|$.  Given $a\in A_+$, the set
$U=\{t\in\delta:\ a(t)\ne0\}$ is open since the function $\Delta\ni t\mapsto\|a(t)\|\in\br$ is continuous. The weak* closure of the ideal generated by $a$ in $\crr$ is of the form $p\crr$ for a unique projection $p\in\cz$ and $p$ is open by definition. Since the quotient algebras $\crr/t\crr$ have only scalars in their centers, $p(t)=1$ for each $t\in U$, hence also for each $t\in\overline{U}$ by continuity, so $p\geq q$, where $q\in\cz$ is the projection that corresponds to the characteristic function of $\overline{U}$. But from the definition of $U$ we see that $qa=a$ and this implies that  $qb=b$ for each $b$ in the ideal generated by $a$. Hence $qp=p$ and it follows that $q=p$. In particular, for each $r\in\br_+$ the projection that corresponds to the closure of the set $U_r=\{t\in\Delta:\, z(t)>r\}$ is open since $U_r$ is just the set $\{t\in\Delta:\, a(t)\ne0\}$, where $a=(h-r)_+$. (This has been observed already by Halpern in \cite[proof of Lemma 6]{Ha2}.)  
Given $\varepsilon>0$, for each $k\in\bn$ let $p_k$ be the  projection corresponding to the closure of the set $U_k=\{t\in\Delta:\, z(t)>M-k\varepsilon\}$. Then $0=p_0\leq p_1\leq p_2\leq\ldots\leq p_n=1$, where $n\in\bn$ is such that $M-n\varepsilon< m$ and $M-(n-1)\varepsilon\geq m$. Now from $1=(p_1-p_0)+(p_2-p_1)+\ldots+(p_n-p_{n-1})$ we have that $F_k:=\overline{U}_{k}\setminus\overline{U}_{k-1}$ are disjoint closed and open sets that cover $\Delta$ and for $t\in F_k$ we have that $M-k\varepsilon\leq z(t)\leq M-(k-1)\varepsilon$. Thus, if we choose in each interval $[M-k\varepsilon,M-(k-1)\varepsilon]$ a point $\lambda_k\geq0$ and set $c:=\sum_{k=1}^n\lambda_k(p_k-p_{k-1})$, it follows that $\|z-c\|\leq\varepsilon$.
Finally, observe that $$c=(\lambda_1-\lambda_2)p_1+(\lambda_2-\lambda_3)p_2+\ldots+(\lambda_{n-1}-\lambda_n)p_{n-1}+\lambda_np_n$$
is a linear combination with positive coefficients of open projections.
\end{proof}
 
The following theorem is a special case of Theorem \ref{H2} below, but it is used in the proof of that theorem. 
\begin{theorem}\label{H1} Let $\omega$ and $\rho$ be states on a  C$^*$-algebra $A$. Then $\rho$ is in the weak* closure $\overline{\omega\circ\ea}$ of the set $\omega\circ\ea=\{\omega\circ\psi:\, \psi\in\ea\}$, where $\ea$ is the set of all unital completely positive elementary complete contractions on $A$, if and only if 
$\|\rho|J\|\leq\|\omega|J\|$ for each   ideal $J$ of $A$. 
\end{theorem}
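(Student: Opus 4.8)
The plan is to pass to the universal von Neumann envelope $\crr=A^{**}$ with centre $\cz$, in which every state of $A$ becomes a normal state and every ideal $J$ corresponds to an open central projection $p\in\cz$ with $\overline J=p\crr$. For a state $\theta$ one has $\|\theta|J\|=\lim_\nu\theta(e_\nu)=\theta(p)$, where $(e_\nu)$ is an approximate unit of $J$ (so $e_\nu\uparrow p$ in the weak* topology). Thus the condition ``$\|\rho|J\|\le\|\omega|J\|$ for every ideal $J$'' is nothing but the requirement
\[
\rho(p)\le\omega(p)\qquad\text{for every open central projection }p\in\cz,
\]
which I shall call $(*)$. For the necessity of $(*)$, if $\psi\in\ea$ and $x\in J_+$ with $\|x\|\le1$, then $\psi(x)\in J_+$ and $\|\psi(x)\|\le1$, hence $(\omega\circ\psi)(x)=\omega(\psi(x))\le\|\omega|J\|$; taking the supremum over an approximate unit of $J$ gives $\|(\omega\circ\psi)|J\|\le\|\omega|J\|$ for all $\psi\in\ea$. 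Since each $e_\nu\in J$ is a fixed element, $\rho(e_\nu)=\lim(\omega\circ\psi_\gamma)(e_\nu)\le\|\omega|J\|$ along any net $\omega\circ\psi_\gamma\to\rho$, and therefore $\|\rho|J\|\le\|\omega|J\|$ for every $\rho\in\overline{\omega\circ\ea}$.

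For the sufficiency I would argue by contradiction as in Theorem \ref{H}. Assuming $(*)$ but $\rho\notin\overline{\omega\circ\ea}$, the Hahn--Banach theorem (applied to the convex set $\omega\circ\ea$, separating hermitian functionals by self-adjoint elements) produces $h\in A_h$ and $\alpha,\delta\in\br$, $\delta>0$, with $\omega(\psi(h))\le\alpha$ for all $\psi\in\ea$ and $\rho(h)\ge\alpha+\delta$. Since the maps in $\ea$ are unital and $\omega,\rho$ are states, replacing $h$ by $h+\gamma1$ lets me assume $h\ge0$. Let $C_h\in\cz_+$ be its central carrier, so that $C_h(t)=\|h(t)\|$ on the maximal ideal space of $\cz$ and $h\le C_h$. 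From $h\le C_h$ and positivity, $\rho(C_h)\ge\rho(h)\ge\alpha+\delta$. On the other hand Lemma \ref{leh1} writes $C_h$ as a norm limit of positive combinations $\sum_k\lambda_k p_k$ of open central projections, so $(*)$ gives $\rho(\sum_k\lambda_k p_k)\le\omega(\sum_k\lambda_k p_k)$ and, in the limit, $\rho(C_h)\le\omega(C_h)$. Hence $\omega(C_h)\ge\alpha+\delta$.

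The contradiction will follow once I establish the matching upper bound $\omega(C_h)\le\sup_{\psi\in\ea}\omega(\psi(h))\;(\le\alpha)$, and this is the crux of the proof. By \cite[4.1]{M} the norm closure of $\{\psi(h):\psi\in\ea\}$ is the set $\mathcal K$ of all $b\in A_h$ whose numerical range $W(b(t))$ lies in $[m(t),M(t)]$ for every $t\in\check A$, where $m(t),M(t)$ are the least and greatest points of $\sigma(h(t))$; equivalently $\mathcal K=\{b\in A_h:\,m\le b\le C_h\}$ inside $\crr$. Since $\omega$ is norm continuous, $\sup_{\psi}\omega(\psi(h))=\sup_{b\in\mathcal K}\omega(b)$, so it suffices to prove that $C_h$ lies in the weak* closure of $\mathcal K$; that is, that the norm-bounded convex order interval $\mathcal K\subseteq A$ is weak* dense in the interval $\{x\in\crr:\,m\le x\le C_h\}$ of $\crr$, which contains $C_h$.

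I expect this weak* density to be the main obstacle: it is a Kaplansky-type statement complicated by the fact that the order bounds $m,C_h$ are central but not scalar and need not lie in $A$, so the usual truncation into the interval leaves $A$. I would reach it by a local (measurable-selection) construction that realises, by finitely many elements of $A$, approximate top-spectral vectors of $h$ fibrewise and distributes the weight according to $\omega$ — exactly the point at which the minimal tensor product over $\cz$ used in Theorem \ref{th} enters. Granting this density, $\omega(C_h)=\sup_\psi\omega(\psi(h))\le\alpha<\alpha+\delta\le\omega(C_h)$, a contradiction, which completes the proof.
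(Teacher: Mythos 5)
Your necessity argument and the overall architecture of your sufficiency argument coincide with the paper's: Hahn--Banach separation of $\rho$ from the convex set $\omega\circ\ea$, translating $h$ to make it positive, passing to $\crr=A^{\sharp\sharp}$, and using Lemma \ref{leh1} together with $\rho(p)\le\omega(p)$ for open central projections to obtain $\rho(C_h)\le\omega(C_h)$. But at exactly the step you yourself flag as the crux there is a genuine gap: the inequality $\omega(C_h)\le\alpha$ is never proved. The weak* density of the order interval $\{b\in A_h:\, m\le b\le C_h\}$ in $\{x\in\crr:\, m\le x\le C_h\}$, which you propose to obtain by an unspecified ``measurable-selection construction,'' is left entirely as a sketch, and your framing over the primitive spectrum $\check{A}$ runs into a real obstruction: for a general $A$ the functions $t\mapsto m(t)$ and $t\mapsto M(t)$ on $\check{A}$ need not be continuous (this is precisely why Theorem \ref{H} assumes $\check{A}$ Hausdorff), so they do not define central elements and \cite[4.1]{M} cannot be invoked in the equivalence form you state.

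The paper closes this exact gap with one much lighter step, which you should record. Work fibrewise over $\Delta$, the maximal ideal space of the center $\cz$ of $\crr$, where $t\mapsto\|h(t)\|$ \emph{is} continuous by Glimm's theorem \cite{G}. Since $h\ge0$, one has $z(t):=\|h(t)\|\in\sigma(h(t))$, hence $W(z(t)1)=\{z(t)\}\subseteq W(h(t))$ for every $t\in\Delta$, and \cite[3.3]{M} then places $z=C_h$ in the \emph{weak*} closure of the $\crr$-convex hull of $h$, i.e.\ of the set of elements $\sum_i b_i^*hb_i$ with $b_i\in\crr$, $\sum_i b_i^*b_i=1$. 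A Kaplansky-density argument in ${\rm M}_n(\crr)$ (approximating the column $(b_1,\ldots,b_n)^T$ strongly by columns over $A$ with $\sum_i a_i^*a_i=1$) replaces $\crr$-coefficients by $A$-coefficients, and weak* continuity of the extension of $\omega$ to $\crr$ turns $\omega(\psi(h))\le\alpha$ for all $\psi\in\ea$ into $\omega(z)\le\alpha$. Note what this buys: no density of a whole order interval is needed, only reachability of the single element $z$, and only via the $\crr$-convex hull at the von Neumann level rather than a fibrewise construction over the possibly non-Hausdorff $\check{A}$. With this step supplied, your chain $\alpha+\delta\le\rho(h)\le\rho(z)\le\omega(z)\le\alpha$ gives the contradiction, so your proof is correct modulo this one citable but essential ingredient.
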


\begin{proof} Evidently $\rho\in\overline{\omega\circ\ea}$ implies that $\|\rho|J\|\leq\|\omega|J\|$ for each ideal $J$ in $A$ since maps in $\ea$ are contractive and preserve ideals. For the converse,  suppose that $\rho\notin\overline{\omega\circ\ea}$. Then by the Hahn-Banach theorem there exist  $h\in A_h$ and $\alpha\in\br$ such that (\ref{20}) holds, that is $\omega(\psi(h))\leq\alpha$ for all $\psi\in\ea$, while $\rho(h)>\alpha.$
Replacing $h$ by $h+\beta1$ for a sufficiently large $\beta\in\br$ (and consequently $\alpha$ by $\alpha+\beta$), we may assume that $h$ is positive.

Let $\crr$ be the universal von Neumann envelope of $A$ and denote the unique weak* continuous extensions of $\omega$ and $\rho$ to $\crr$ by the same two letters. We will use the same notation as in the proof of Lemma \ref{leh1}. Thus $z$ is the infimum of all $c$ in $\cz$  such that 
$h\leq c$. Since $W(z(t)1)=\{z(t)\}\subseteq W(h(t))$ for each $t\in\Delta$, it follows by \cite[3.3]{M} that $z\in\overline{{\rm co}_{\crr}}(h)$ (= the weak* closure of the $\crr$-convex hull of $h$), hence by the first relation in (\ref{20}) \begin{equation}\label{000}\omega(z)\leq\alpha,\end{equation} since each map $\psi$ of the form $x\mapsto\sum_ib_i^*xb_i$ ($b_i\in\crr$, $\sum_ib_i^*b_i=1$) can be approximated by maps of the form $x\mapsto\sum_ia_i^*xa_i$ ($a_i\in A$, $\sum_ia_i^*a_i=1$). (This follows by using the Kaplansky density theorem in  ${\rm M}_n(\crr)$ to approximate the column $b=(b_1,\ldots,b_n)^T$ by $(a_1,\ldots,a_n)^T$.) 
Since $\omega$ and $\rho$ are states,  the hypothesis $\|\rho|J\|\leq\|\omega|J\|$ for each ideal $J$ in $A$ means that $\rho(p)\leq\omega(p)$ for each open projection $p\in\cz$. Then it follows by Lemma \ref{leh1} that $\rho(z)\leq\omega(z)$. But from $h\leq z$ and using (\ref{000})
we have now that
$\rho(h)\leq\rho(z)\leq\omega(z)\leq\alpha$, which is in contradiction with the previously established relation $\rho(h)>\alpha$. 
\end{proof}

The naive attempt to generalize Theorem \ref{H1} to hermitian functionals fails, as shown by the following example.
The example also shows that the assumption in Theorem \ref{H}, that $A$ has Hausdorff primitive spectrum,
is not redundant and that in Theorem \ref{thvn} the normality of $\omega$ and $\rho$ is not redundant.
\begin{ex}\label{ex1}For a separable Hilbert space $\h$, let $\omega_1$ be a normal and $\omega_2$ a singular state on $\bh$,
$\rho_1$ and $\rho_2$ positive normal functionals on $\bh$ with orthogonal supports such that $\rho_1(1)=\frac{1}{2}=\rho_2(1)$. Set $\omega=\omega_1-\omega_2$ and $\rho=\rho_1-\rho_2$.
Then $\rho(1)=0=\omega(1)$, $\|\rho\|=\rho_1(1)+\rho_2(1)=1=\omega_1(1)\leq\|\omega\|$.  Since $\rho_1$, $\rho_2$ and $\omega_1$ are normal, while $\omega_2$ is singular (which means that  $\omega_2$ annihilates the  ideal $\kh$ of all compact operators on $\h$), we have $\|\rho|\kh\|=\|\rho\|=\rho_1(1)+\rho_2(1)=\omega_1(1)=\|\omega_1|\kh\|=\|\omega|\kh\|\leq\|\omega\|$. Thus $\|\rho|J\|\leq\|\omega|J\|$ for each ideal $J$ of $\bh$ and $\omega$ and $\rho$ agree on the center $\bc1$ of $\bh$. But nevertheless, 
$\rho\notin\overline{\omega\circ{\rm E}(\bh)}$ since on $\kh$ all elements of $\overline{\omega\circ{\rm E}(\bh)}$ acts as elements of
$\overline{\omega_1\circ{\rm E}(\bh)}|\kh$ and are therefore positive, while $\rho|\kh=(\rho_1-\rho_2)|\kh$ is not positive.
\end{ex}

To generalize Theorem \ref{H1} to hermitian functionals we need a lemma.

\begin{lemma}\label{leh}For each hermitian functional $\omega$ on a C$^*$-algebra $A$ we have 
$$\overline{\omega\circ\ea}=\overline{\omega_+\circ\ea}-\overline{\omega_-\circ\ea}.$$
\end{lemma}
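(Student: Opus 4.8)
The plan is to prove the two inclusions separately, the inclusion $\subseteq$ being elementary and the inclusion $\supseteq$ resting on the orthogonality of the supports of $\omega_+$ and $\omega_-$ in the bidual. For $\subseteq$, note that every map in $\ea$ is unital and completely positive, so for $\psi\in\ea$ the functionals $\omega_{\pm}\circ\psi$ are positive with $(\omega_{\pm}\circ\psi)(1)=\omega_{\pm}(1)=\|\omega_{\pm}\|$; hence $\omega_+\circ\ea$ and $\omega_-\circ\ea$ lie in fixed multiples of the unit ball of $A^{\sharp}$ and their weak* closures $\overline{\omega_+\circ\ea}$ and $\overline{\omega_-\circ\ea}$ are weak* compact. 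Since subtraction is weak* continuous, the algebraic difference $\overline{\omega_+\circ\ea}-\overline{\omega_-\circ\ea}$ is weak* compact, in particular weak* closed. As $\omega\circ\psi=\omega_+\circ\psi-\omega_-\circ\psi$ for every $\psi\in\ea$, the set $\omega\circ\ea$ is contained in this closed set, and therefore so is $\overline{\omega\circ\ea}$.

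For the reverse inclusion, because $(\sigma,\tau)\mapsto\sigma-\tau$ is weak* continuous and $\overline{\omega\circ\ea}$ is weak* closed, the set $\{(\sigma,\tau):\sigma-\tau\in\overline{\omega\circ\ea}\}$ is closed in the product topology. Since the closure of a product is the product of the closures, it suffices to show that $\omega_+\circ\psi_1-\omega_-\circ\psi_2\in\overline{\omega\circ\ea}$ for all $\psi_1,\psi_2\in\ea$.

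So fix $\psi_1(x)=\sum_k c_k^*xc_k$ and $\psi_2(x)=\sum_l d_l^*xd_l$ with $\sum_k c_k^*c_k=1=\sum_l d_l^*d_l$ and $c_k,d_l\in A$. Pass to the universal von Neumann envelope $\crr=A^{**}$, extending $\omega,\omega_{\pm}$ to normal functionals. Let $e$ be the support projection of $\omega_+$ in $\crr$; by the Jordan decomposition the supports of $\omega_+$ and $\omega_-$ are orthogonal, so $e^{\perp}$ dominates the support of $\omega_-$ and one checks directly that $\omega_+(y)=\omega(eye)$ and $\omega_-(y)=-\omega(e^{\perp}ye^{\perp})$ for all $y\in\crr$. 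Define $\Psi\colon\crr\to\crr$ by
$$\Psi(x)=\sum_k (c_ke)^*x(c_ke)+\sum_l (d_le^{\perp})^*x(d_le^{\perp}).$$
Then $\Psi(1)=e\big(\sum_k c_k^*c_k\big)e+e^{\perp}\big(\sum_l d_l^*d_l\big)e^{\perp}=e+e^{\perp}=1$, so $\Psi\in\er$, and the displayed identities for $\omega_{\pm}$ give $\omega\circ\Psi=\omega_+\circ\psi_1-\omega_-\circ\psi_2$ as functionals on $\crr$; restricting to $A$ yields exactly the functional we must reach.

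It remains to see that $(\omega\circ\Psi)|A\in\overline{\omega\circ\ea}$, and this is precisely the approximation already used in the proof of Theorem \ref{H1}: since $\omega$ is weak* continuous on $\crr$ and $\Psi$ has coefficients in $\crr$ with $\Psi(1)=1$, one approximates the coefficient column by columns with entries in $A$ via the Kaplansky density theorem in ${\rm M}_n(\crr)$ (restoring unitality with a $\sqrt{1-\,\cdots}$ correction as at the end of the proof of Theorem \ref{th}), obtaining maps in $\ea$ whose compositions with $\omega$ converge weak* to $\omega\circ\Psi$. I expect this last descent to be the main obstacle: the projection $e$ separating $\omega_+$ from $\omega_-$ lives in $\crr=A^{**}$ and not in $A$, so the clean factorization $\omega_+\circ\psi_1-\omega_-\circ\psi_2=\omega\circ\Psi$ is only available for the $\crr$-coefficient map $\Psi$, and one must return to $\ea$ by a density argument while keeping the approximants unital and completely positive.
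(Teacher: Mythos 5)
Your proof is correct, and the core idea of the harder inclusion is the same as the paper's: splice $\psi_1$ and $\psi_2$ together using the support projection of $\omega_+$ in the bidual, where the Jordan supports are orthogonal. But the execution differs at both ends, and the differences are worth recording. For $\supseteq$, the paper never forms your exact map $\Psi(x)=e\psi_1(x)e+e^{\perp}\psi_2(x)e^{\perp}$ with coefficients in $\crr$; instead it approximates the support projection $p$ of $\omega_+$ \emph{first}, by a net of positive contractions $a_n\in A$ converging strongly to $p$, sets $b_n=\sqrt{1-a_n^2}$, and works from the start with the maps $\theta_{k,n}(x)=a_n\phi_k(x)a_n+b_n\psi_k(x)b_n$, which lie in $\ea$ outright; the convergence $\omega\circ\theta_{k,n}\to\rho_1-\rho_2$ is then checked by hand via Cauchy--Schwarz estimates such as $|\omega_+((1-a_n)\phi_k(x))|\leq\omega_+((1-a_n)^2)^{1/2}\|\omega_+\|^{1/2}\|x\|$. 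Your variant gets an exact algebraic identity $\omega\circ\Psi=\omega_+\circ\psi_1-\omega_-\circ\psi_2$ in $\crr$ and defers all approximation to a single Kaplansky-density descent of the coefficient column from $\crr$ to $A$ --- legitimately the same device the paper invokes in the proof of Theorem \ref{H1}, and your unitality repair by appending $\sqrt{1-a^*a}$ matches the end of the proof of Theorem \ref{th}; just make sure the column approximation is in the strong* topology so that both cross terms $a^*x(a-u)$ and $(a-u)^*xu$ converge against the normal extension of $\omega$. Your reduction to pairs $(\omega_+\circ\psi_1,\omega_-\circ\psi_2)$ via closedness of $\{(\sigma,\tau):\sigma-\tau\in\overline{\omega\circ\ea}\}$ and the identity $\overline{S\times T}=\overline{S}\times\overline{T}$ is sound and tidier than the paper's joint limit $\lim_{k,n}$. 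For $\subseteq$, your compactness argument (both closures sit in balls of radii $\|\omega_{\pm}\|$, so their difference is a continuous image of a compact set, hence weak* closed) is more elementary than the paper's, which instead takes a point-weak* limit point $\psi$ of the approximating net in the bidual and writes $\rho=\omega_+\circ\psi|A-\omega_-\circ\psi|A$. Note, however, that the paper's heavier argument is not wasted: the specific decomposition through a single unital $\cz$-module map $\psi$ is exactly what the proof of Theorem \ref{H2} later reuses to get $\rho_1(1)=\omega_+(1)$, $\rho_2(1)=\omega_-(1)$ and the ideal-norm inequalities, whereas your compactness argument proves the lemma as stated but produces no such map.
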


\begin{proof}Suppose that $\rho\in\overline{\omega\circ\ea}$ and let $(\psi_k)$ be a net in $\ea$ such that $\rho(a)=\lim_k\omega(\psi_k(a))$ for all $a\in A$. Extend $\omega$, $\rho$ and each $\psi_k$ weak* continuously to the universal von Neumann envelope $\crr$ of $A$ and denote the extensions by the same symbols. Let $\psi$ be a weak* limit point of the net $(\psi_k)$ and note that $\psi$ is a unital completely positive (hence contractive) module map over the center $\cz$ of $\crr$. Set $\rho_1=\omega_+\circ\psi|A$ and $\rho_2=\omega_-\circ\psi|A$. Then $\rho_1\in\overline{\omega_+\circ\ea}$, $\rho_2\in\overline{\omega_-\circ\ea}$ and $\rho=\omega\circ\psi|A=\rho_1-\rho_2$. This proves the inclusion $\overline{\omega\circ\ea}\subseteq\overline{\omega_+\circ\ea}-\overline{\omega_-\circ\ea}$.

To prove the reverse inclusion, suppose that $\rho_1\in\overline{\omega_+\circ\ea}$ and $\rho_2\in\overline{\omega_-\circ\ea}$. Then there exist nets of maps $\phi_k$ and $\psi_k$ in $\ea$ such that $\rho_1=\lim_k\omega_+\circ\phi_k$ and $\rho_2=\lim_k\omega_-\circ\psi_k$. Let $p$ and $q$ be the support projections in $\crr$ of $\omega_+$ and $\omega_-$ (where $\omega_+$ and $\omega_-$ have been weak* continuously extended to $\crr$). Let $(a_n)$ be a net of positive contractions in $A$ strongly converging to $p$ in $\crr$, set $b_n=\sqrt{1-a_n^2}$ and define maps $\phi_{k,n}$ and $\psi_{k,n}$ on $A$ by
$$\phi_{k,n}(x)=a_n\phi_k(x)a_n\ \ \mbox{and}\ \ \psi_{k,n}(x)=b_n\psi_k(x)b_n.$$
The nets $(\omega_+(b_n^2))=(\omega_+(1-a_n^2))$ and $(\omega_-(a_n^2))= (\omega_-(1-b_n^2))$ all converge to $0$. From this we will verify below by using the Cauchy-Schwarz inequality for positive functionals that $\lim_{k,n}\omega_+\circ\phi_{k,n}=\rho_1$,
$\lim_{k,n}\omega_-\circ\psi_{k,n}=\rho_2$,
$\lim_{k,n}\omega_+\circ\psi_{k,n}=0$ and $\lim_{k,n}\omega_-\circ\phi_{k,n}=0$ pointwise on $A$, hence
$$\rho:=\rho_1-\rho_2=\lim_{k,n}[(\omega_+-\omega_-)\circ(\phi_{k,n}+\psi_{k,n})]=\lim_{k,n}\omega\circ\theta_{k,n},$$
where $\theta_{k,n}:=\phi_{k,n}+\psi_{k,n}$.
Evidently each $\theta_{k,n}$ is elementary completely positive map and also unital since $\theta_{k,n}(1)=a_n\phi_k(1)a_n+b_n\psi_k(1)b_n=a_n^2+b_n^2=1$. Thus $\rho\in\overline{\omega\circ\ea}$, verifying the inclusion $\overline{\omega\circ\ea}\supseteq\overline{\omega_+\circ\ea}-\overline{\omega_-\circ\ea}$.
Now we will verify that $\lim_{k,n}\omega_+\phi_{k,n}=\rho_1$, the verification of the other three limits that we have used is similar. For each $x\in A$ we estimate
$$|\rho_1(x)-\omega_+(\phi_{k,n}(x))|=|\rho_1(x)-\omega_+(a_n\phi_k(x)a_n)|$$$$\leq|\rho_1(x)-\omega_+(\phi_k(x))|+|\omega_+((1-a_n)\phi_k(x))|+|\omega_+(a_n\phi_k(x)(1-a_n))|$$$$
\leq|\rho_1(x)-\omega_+(\phi_k(x))|+\omega_+((1-a_n)^2)^{1/2}\omega_+(\phi_k(x)^*\phi_k(x))^{1/2}$$$$+\omega_+(\phi_k(x)^*a_n^2\phi_k(x))^{1/2}\omega_+((1-a_n)^2)^{1/2}$$
$$\leq|\rho_1(x)-\omega_+(\phi_k(x))|+2\omega_+((1-a_n)^2)^{1/2}\|\omega_+\|^{1/2}\|x\|.$$
Both terms in the last line of the above expression converge to $0$.
\end{proof}

\begin{theorem}\label{H2}Let $\omega$ and $\rho$ be hermitian functionals on a  C$^*$-algebra $A$. Then $\rho\in\overline{\omega\circ\ea}$ if and only if there exist positive functionals $\rho_1$ and $\rho_2$ on $A$ satisfying the following condition:

\smallskip
\noindent(B) $\rho=\rho_1-\rho_2$, $\rho_1(1)=\omega_+(1)$, $\rho_2(1)=\omega_-(1)$, $\|\rho_1|J\|\leq\|\omega_+|J\|$ and $\|\rho_2|J\|\leq\|\omega_-|J\|$ for all ideals $J$ in $A$. 

\smallskip
\noindent(In particular $\|\rho|J\|\leq\|\omega|J\|$.) If $\omega$ is positive, then the condition (B) simplifies to $\rho(1)=\omega(1)$ and $\|\rho|J\|\leq\|\omega|J\|$ for all ideals $J$.
\end{theorem}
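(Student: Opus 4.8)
The plan is to combine two results already in hand: the set identity of Lemma~\ref{leh}, which reduces an arbitrary hermitian $\omega$ to its positive and negative parts, and Theorem~\ref{H1}, which settles the question for states. The first step is to upgrade Theorem~\ref{H1} from states to arbitrary positive functionals by scaling. Fix a positive functional $\omega$ and put $t:=\omega(1)$. Every $\psi\in\ea$ is unital and completely positive, so each $\omega\circ\psi$ is positive with $(\omega\circ\psi)(1)=\omega(\psi(1))=t$; since positivity and the value at $1$ are preserved under weak* limits, every element of $\overline{\omega\circ\ea}$ is a positive functional taking the value $t$ at $1$. When $t>0$, dividing by $t$ turns $\omega$ and any such $\rho$ into states, and since $\omega\circ\ea=t\,((\omega/t)\circ\ea)$ I can read off from Theorem~\ref{H1} that, for positive $\rho$ with $\rho(1)=t$, one has $\rho\in\overline{\omega\circ\ea}$ if and only if $\|\rho|J\|\le\|\omega|J\|$ for every ideal $J$. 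The case $t=0$ is trivial, since then $\omega=0$ and $\overline{\omega\circ\ea}=\{0\}$. This yields the \emph{positive version}: for positive functionals $\omega,\rho$ one has $\rho\in\overline{\omega\circ\ea}$ iff $\rho(1)=\omega(1)$ and $\|\rho|J\|\le\|\omega|J\|$ for all $J$, where positivity of $\rho$ and the equality $\rho(1)=\omega(1)$ are automatic in the forward direction.

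Next I would apply Lemma~\ref{leh}, giving the set equality $\overline{\omega\circ\ea}=\overline{\omega_+\circ\ea}-\overline{\omega_-\circ\ea}$. Thus $\rho\in\overline{\omega\circ\ea}$ if and only if $\rho=\rho_1-\rho_2$ with $\rho_1\in\overline{\omega_+\circ\ea}$ and $\rho_2\in\overline{\omega_-\circ\ea}$. Applying the positive version of the previous paragraph to the positive functionals $\omega_+$ and $\omega_-$ separately, membership $\rho_1\in\overline{\omega_+\circ\ea}$ is equivalent to $\rho_1\ge0$, $\rho_1(1)=\omega_+(1)$ and $\|\rho_1|J\|\le\|\omega_+|J\|$ for all $J$, and likewise for $\rho_2$ with $\omega_-$. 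These are exactly the requirements assembled in condition (B), so both implications follow at once: in the forward direction the decomposition of Lemma~\ref{leh} supplies $\rho_1,\rho_2$ satisfying (B), and in the reverse direction positive functionals satisfying (B) lie in $\overline{\omega_+\circ\ea}$ and $\overline{\omega_-\circ\ea}$ respectively, whence $\rho=\rho_1-\rho_2\in\overline{\omega\circ\ea}$.

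It remains to justify the two parenthetical claims. For the inequality $\|\rho|J\|\le\|\omega|J\|$, from (B) I have $\|\rho|J\|\le\|\rho_1|J\|+\|\rho_2|J\|\le\|\omega_+|J\|+\|\omega_-|J\|$, so I only need the additivity identity $\|\omega|J\|=\|\omega_+|J\|+\|\omega_-|J\|$. I would prove this by passing to the universal von Neumann envelope $\crr$, where $J$ has weak* closure $p\crr$ for an open central projection $p$: by Kaplansky density and weak* continuity $\|\omega|J\|=\|p\omega\|$, where $(p\omega)(x)=\omega(px)$, and since $p$ is central the functionals $p\omega_+,p\omega_-$ are positive with orthogonal supports, so $p\omega=p\omega_+-p\omega_-$ is the Jordan decomposition of $p\omega$ and $\|p\omega\|=\omega_+(p)+\omega_-(p)=\|\omega_+|J\|+\|\omega_-|J\|$. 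Finally, when $\omega$ is positive one has $\omega_-=0$, so the condition $\|\rho_2|J\|\le\|\omega_-|J\|=0$ forces $\rho_2=0$; then (B) reduces to $\rho=\rho_1\ge0$, $\rho(1)=\omega(1)$ and $\|\rho|J\|\le\|\omega|J\|$, and here positivity of $\rho$ is already implied by the other two conditions, since taking $J=A$ gives $\|\rho\|\le\|\omega\|=\omega(1)=\rho(1)$, forcing $\rho_-=0$. The only genuinely technical point is the norm-additivity identity for the restriction to an ideal; everything else is bookkeeping around the two cited results.
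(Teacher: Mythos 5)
Your proof is correct and takes essentially the same route as the paper: the reverse implication via Lemma~\ref{leh} combined with Theorem~\ref{H1}, and the forward implication via the decomposition $\overline{\omega\circ\ea}=\overline{\omega_+\circ\ea}-\overline{\omega_-\circ\ea}$ together with the easy direction of Theorem~\ref{H1} (the paper instead re-runs the limit-map argument from Lemma~\ref{leh} directly, which is the same content). You also correctly justify two points the paper uses without comment — the scaling of Theorem~\ref{H1} from states to positive functionals with matched values at $1$, and the additivity $\|\omega|J\|=\|\omega_+|J\|+\|\omega_-|J\|$ via orthogonal supports in the universal envelope — so nothing is missing.
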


\begin{proof}Suppose that $\rho\in\overline{\omega\circ\ea}$. Using the notation introduced in the first part of the proof of Lemma \ref{leh}, we have observed that the map $\psi$ on $\crr$ introduced in that proof is a contractive unital
$\cz$-bimodule map.  Thus for any ideal $J$ in $A$, if $p\in\cz$ is the projection satisfying $\overline{J}=p\crr$, then $\psi(\overline{J})=\psi(p\crr)=p\psi(\crr)\subseteq\overline{J}$. Since $\omega$ (and hence also $\omega_+$ and $\omega_-$) are
weak* continuous on $\crr$, we have $\|\omega_+|\overline{J}\|=\|\omega_+|J\|$ and $\|\omega_-|\overline{J}\|=\|\omega_-|J\|$.  With $\rho_1=\omega_+\circ\psi|A$ and $\rho_2=\omega_-\circ\psi|A$ (as in the proof of Lemma \ref{leh}) we have $\rho=\rho_1-\rho_2$, $\rho_1(1)=\omega_+(1)$, $\rho_2(1)=\omega_-(1)$,
$$\|\rho_1|J\|\leq\|\omega_+\circ\psi|\overline{J}\|\leq\|\omega_+|\overline{J}\|=\|\omega_+|J\|$$
and similarly $\|\rho_2|J\|\leq\|\omega_-|J\|$. Therefore also
$$\|\rho|J\|=\|\rho_1|J-\rho_2|J\|\leq\|\rho_1|J\|+\|\rho_2|J\|\leq\|\omega_+|J\|+\|\omega_-|J\|=\|\omega|J\|.$$

Conversely, assume the existence of positive functionals $\rho_1$ and $\rho_2$ on $A$ satisfying the norm inequalities in condition (B). Then by Theorem \ref{H1} $\rho_1\in\overline{\omega_+\circ\ea}$ and $\rho_2\in\overline{\omega_-\circ\ea}$, hence by Lemma \ref{leh} $\rho\in\overline{\omega\circ\ea}$. 
\end{proof}

\section{Maximally mixed states}

For functionals $\omega$ and $\rho$ on a C$^*$-algebra $A$ let us say that $\rho$  is more mixed than  $\omega$ if $\rho\in\overline{\omega\circ\ea}$ (where the bar denotes weak* closure).  Applying Zorn's lemma to the family of all weak* closed $\ea$-invariant subsets of $\overline{\omega\circ\ea}$ we see that in $\overline{\omega\circ\ea}$ there exist minimal $\ea$-invariant compact non-empty subsets, which are evidently of the form $\overline{\rho\circ\ea}$ for some $\rho$ and such  $\rho$  are called  maximally mixed. Thus a functional $\omega$ is {\em maximally mixed} if $\rho\in\overline{\omega\circ\ea}$ implies that $\omega\in\overline{\rho\circ\ea}$. If $A$ has Hausdorff primitive spectrum, Corollary \ref{Hc} implies that all states on $A$ are maximally mixed. 
The same conclusion holds  for  liminal C$^*$-algebras.

\begin{co}On a liminal C$^*$-algebra $A$ every state $\omega$ is maximally mixed.
\end{co}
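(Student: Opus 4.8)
The plan is to reduce the statement to the symmetry of the norm condition supplied by Theorem \ref{H1}, and then to establish that symmetry by an outer-regularity argument on the primitive spectrum. By Theorem \ref{H1}, for states $\omega,\rho$ one has $\rho\in\overline{\omega\circ\ea}$ if and only if $\|\rho|J\|\le\|\omega|J\|$ for every ideal $J$, and likewise $\omega\in\overline{\rho\circ\ea}$ if and only if $\|\omega|J\|\le\|\rho|J\|$ for every $J$. Hence $\omega$ is maximally mixed precisely when the family of inequalities $\|\rho|J\|\le\|\omega|J\|$ (over all ideals $J$) forces the reverse inequalities, i.e.\ forces equality $\|\rho|J\|=\|\omega|J\|$ for all $J$. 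So it suffices to prove: if $\omega,\rho$ are states on a liminal $A$ with $\|\rho|J\|\le\|\omega|J\|$ for all ideals $J$, then $\|\rho|J\|=\|\omega|J\|$ for all $J$.

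First I would record the single structural input from liminality: every primitive ideal $P$ is maximal, since $A/P$ is isomorphic to the algebra of compact operators on some Hilbert space and is therefore simple. Consequently each point of the primitive spectrum $\check{A}$ is closed, so $\check{A}$ is a $T_1$ space. Passing to the universal von Neumann envelope $\crr$ with center $\cz$ and extending $\omega,\rho$ weak* continuously, I would use the standard correspondence between ideals $J\subseteq A$ and open central projections $p\in\cz$ (with $\overline{J}=p\crr$) together with the identity $\|\sigma|J\|=\sigma(p)$, valid for positive $\sigma$. Under this dictionary the hypothesis reads $\rho(p)\le\omega(p)$ for every open central projection $p$; since $\omega,\rho$ are states with $\omega(1)=\rho(1)=1$, passing to complements gives $\rho(e)\ge\omega(e)$ for every closed central projection $e$, and the goal becomes $\rho(p)=\omega(p)$ for all open central $p$.

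The heart of the argument is outer regularity: for each closed central projection $e_F$ (corresponding to a closed set $F\subseteq\check{A}$) I want $\omega(e_F)=\inf\{\omega(p_U):U\supseteq F\ \text{open}\}$. Granting this, for open $U$ with complement $F$ and any open $V\supseteq F$ one has $p_V\ge e_F$, whence $\rho(e_F)\le\rho(p_V)\le\omega(p_V)$; taking the infimum over $V$ gives $\rho(e_F)\le\omega(e_F)$, which combined with $\rho(e_F)\ge\omega(e_F)$ yields $\rho(e_F)=\omega(e_F)$, i.e.\ $\rho(p)=\omega(p)$, as required. To obtain the regularity I would use that $T_1$-ness gives $\bigcap\{U\ \text{open}:U\supseteq F\}=F$ (indeed $\check{A}\setminus\{x\}$ is an open neighbourhood of $F$ missing any prescribed $x\notin F$), then upgrade this to the equality $\inf\{p_U:U\supseteq F\ \text{open}\}=e_F$ of projections in $\cz$, and conclude by normality of $\omega$ on $\cz$: the family $\{p_U\}$ is downward directed (finite intersections of open neighbourhoods of $F$ remain such), so it converges in the strong operator topology to its infimum and $\omega(p_U)\to\omega(\inf_U p_U)=\omega(e_F)$.

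The main obstacle is precisely this last upgrade: passing from the set-theoretic identity $\bigcap_{U\supseteq F}U=F$ to the projection identity $\inf_U p_U=e_F$ in $\cz$, i.e.\ ruling out a nonzero ``boundary'' central projection $g=\inf_U p_U-e_F$ that lies below every open neighbourhood of $F$ yet is orthogonal to $e_F$ (and to every point projection $e_{\{x\}}$, $x\notin F$). Conceptually this is the assertion that the finite measure $U\mapsto\omega(p_U)$ on the $T_1$ space $\check{A}$ is outer regular, reflecting that for a type I (in particular liminal) algebra the extension of a state to $\cz$ is normal and corresponds to a genuine Borel measure on $\check{A}$, so the needed continuity from above is just continuity of the measure. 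I would discharge the projection identity by working in $\cz\cong L^{\infty}(\Omega,\nu)$, where open and closed central projections are the relevant indicators: as $U$ shrinks to $F$ the complements $U^c$ increase to the open set $F^c$, and $\sup_U e_{U^c}=p_{F^c}$ because suprema of indicators of an increasing family of closed sets give the indicator of their union; equivalently $\inf_U p_U=1-p_{F^c}=e_F$.
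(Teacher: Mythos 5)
Your overall architecture matches the paper's: apply Theorem \ref{H1} in both directions, translate the hypothesis into $\rho(p)\leq\omega(p)$ for every open central projection $p$ in the universal envelope $\crr=A^{\sharp\sharp}$, and use complementation together with $\rho(1)=1=\omega(1)$ to force $\rho(p)=\omega(p)$. The gap is exactly the step you flag and then claim to discharge: the passage from the set-theoretic identity $\bigcap\{U:U\supseteq F\ \mbox{open}\}=F$ (which does follow from $T_1$-ness, and $T_1$-ness does follow from liminality) to the projection identity $\inf_U p_U=e_F$ in $\cz$. Your justification --- that in $\cz\cong L^{\infty}(\Omega,\nu)$ ``suprema of indicators of an increasing family of closed sets give the indicator of their union'' --- is false as a general principle in an abelian von Neumann algebra: in $L^{\infty}[0,1]$ with Lebesgue measure, the indicators of the finite subsets of $[0,1]$ form an increasing family of indicators of closed sets whose lattice supremum is $0$, while the indicator of their union is $1$. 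More fundamentally, the open and closed central projections of $A^{\sharp\sharp}$ are not indicators of the corresponding subsets of $\check{A}$ on any single measure space: the spectrum of $\cz$ is a huge hyperstonean space, and the possible ``boundary'' projection $g=\inf_U p_U-e_F$ lives precisely in the part of $\cz$ that your identification erases (already for $A=C[0,1]$ the closed projection $e_{\{x\}}$ is nonzero although $\{x\}$ is Lebesgue-null). What your identity amounts to is an outer-regularity theorem: for every state $\sigma$ of $A$, with normal extension $\tilde\sigma$ to $\crr$, one needs $\inf_U\tilde\sigma(p_U)=\tilde\sigma(e_F)$. In the commutative case this is exactly outer regularity of Radon measures (a real theorem, not a lattice triviality), and in the liminal case it is a genuine theorem about GCR algebras --- precisely the input the paper imports from Digernes--Halpern \cite{DH}, namely that for liminal $A$ the open central projections are strongly dense in the set of all central projections of $\crr$; applying this to $p^{\perp}$ gives $\rho(p^{\perp})\leq\omega(p^{\perp})$, after which the count $\rho(p)+\rho(p^{\perp})=1=\omega(p)+\omega(p^{\perp})$ closes the argument exactly as in your final step.

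A telling symptom that the gap is real rather than cosmetic: beyond its first sentence, your argument uses liminality only through the $T_1$ property of $\check{A}$, so if it were correct it would prove that on any unital C$^*$-algebra all of whose primitive ideals are maximal every state is maximally mixed; by Remark \ref{req} this would upgrade Theorem \ref{L} to an equivalence with no weak-centrality hypothesis, a statement the paper pointedly does not make. The repair is to keep your reduction (which is sound, including the directedness of $\{p_U\}$ and the normality of the extended states on $\crr$) but replace the lattice manipulation by the approximation theorem of \cite{DH}, or by an independent proof of the regularity statement that genuinely exploits the type I structure of liminal algebras and not merely the $T_1$-ness of their spectrum.
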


\begin{proof}If $\rho\in\overline{\omega\circ\ea}$, then by Theorem \ref{H1} $\|\rho|J\|\leq\|\omega|J\|$ for each ideal $J$ in $A$. Denoting by $p$ the projection in $\crr:=A^{\sharp\sharp}$ such that $\overline{J}=p\crr$, this means that $\rho(p)\leq\omega(p)$ for each open central projection $p$, where $\omega$ and $\rho$ have been weak* continuously extended to $\crr$. Since $A$ is liminal, such projections are strongly dense in the set of all central projcections  by \cite{DH}, hence it follows that $\rho(p^{\perp})\leq\omega(p^{\perp})$. Since $\rho(p)+\rho(p^{\perp}=\rho(1)=1=\omega(1)=\omega(p)+\omega(p^{\perp})$, we conclude that $\rho(p)=\omega(p)$, that is $\|\rho|J\|=\|\omega|J\|$.
By Theorem \ref{H1} this implies that $\omega\in\overline{\rho\circ\ea}$.
\end{proof}

Perhaps the simplest C$^*$-algebras on which not all states are maximally mixed are C$^*$-algebras that have only one maximal ideal and this ideal is not $0$. 

\begin{ex}\label{lm}Suppose that a unital C$^*$-algebra $A$ has only one maximal ideal $M$ (for example, $A$ may be simple or a factor). Then a state $\omega$ on $A$ is maximally mixed if and only if $\omega|M=0$.

\begin{proof} Suppose that $\omega|M=0$ and let $\rho\in\overline{\omega\circ\ea}$. Then $\rho|M=0$, hence also $\rho(J)=0$ for each proper ideal $J$ of $A$ since $J\subseteq M$. Thus $\|\omega|J\|=\|\rho|J\|$ for each ideal $J$ of $A$, so $\omega\in\overline{\rho\circ\ea}$ by Theorem \ref{H1}.

Suppose now that $\omega|M\ne0$. Let $\rho$ be any state on $A$ such that $\rho|M=0$. Then $\|\rho|J\|\leq\|\omega|J\|$ for all ideals $J$, hence $\rho\in\overline{\omega\circ\ea}$ by Theorem \ref{H1}. But $\omega\notin\overline{\rho\circ\ea}$ since $\rho|M=0$ and $\omega|M\ne0$, thus $\omega$ is not maximally mixed. 
\end{proof}
\end{ex}

\begin{re}\label{req}If $K$ is an ideal of $A$, each state  $\omega$ on $A$ satisfying $\omega(K)=0$ may be regarded as a state on $A/K$, say $\dot\omega$. Note that  $\dot\omega$ is maximally mixed on $A/K$ if and only if $\omega$ is maximally mixed on $A$. Indeed, denoting by $q:A\to A/K$ the natural map, $q(J)$ is an ideal in $A/K$ for each ideal $J$ in $A$ and all ideals in $A/K$ are of such a form. Moreover,  $\|\omega|J\|=\|\dot\omega|q(J)\|$, hence the claim follows from Theorem \ref{H1}.
\end{re} 

Example \ref{lm} is generalized in Theorem \ref{t} below. The proof of Theorem \ref{t} is inspired by an idea from  \cite[3.10]{ART}, but we will avoid using a background  result from \cite{ART1}, that is used in \cite[3.10]{ART}, and present a short self-contained proof. Recall that the strong radical $J_A$ of $A$ is the intersection of all maximal ideals in $A$.

\begin{theorem}\label{t}(i) $\omega(J_A)=0$ for each maximally mixed state $\omega$ on $A$.

(ii) If a state $\omega$ on $A$ annihilates some intersection $M_1\cap M_2\cap\ldots\cap M_n$ of finitely many maximal ideals  in $A$, then $\omega$ is maximally mixed.

Thus the set $S_m(A)$ of maximally mixed states on $A$ is a weak* dense subset of $S(A/J_A)$ (= the set of states on $A$ that annihilate $J_A$).
\end{theorem}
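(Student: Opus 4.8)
The plan is to reduce both parts to the ``ideal--norm profile'' $J\mapsto\|\,\cdot\,|J\|$ via Theorem \ref{H1}. Since every convex combination of maps in $\ea$ again lies in $\ea$, each orbit $\omega\circ\ea$ is convex, so $\overline{\omega\circ\ea}$ is a weak*-compact convex $\ea$-invariant subset of $S(A)$; and by Theorem \ref{H1}, $\rho\in\overline{\omega\circ\ea}$ if and only if $\|\rho|J\|\le\|\omega|J\|$ for every ideal $J$. Consequently $\omega$ is maximally mixed precisely when its profile is minimal, i.e.\ $\|\rho|J\|\le\|\omega|J\|$ for all $J$ forces $\|\rho|J\|=\|\omega|J\|$ for all $J$. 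I also record, passing to $\crr=A^{\sharp\sharp}$ and writing $e$ for the open central projection of $J_A$, that $\|\sigma|J_A\|=\sigma(e)$ for every state $\sigma$.

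For (ii) I would work in the quotient $B=A/K$, $K=M_1\cap\cdots\cap M_n$, using Remark \ref{req} to transfer maximal mixedness between $A$ and $B$. After discarding repetitions the $M_i$ are distinct maximal, hence pairwise coprime, so $B\cong\bigoplus_{i=1}^nA/M_i$ with each summand $A/M_i$ simple. Every state of $B$ then decomposes as $\dot\omega=\sum_i\lambda_i\dot\omega_i$ with $\dot\omega_i\in S(A/M_i)$, $\lambda_i\ge0$, $\sum_i\lambda_i=1$, the ideals of $B$ are the partial sums $\bigoplus_{i\in S}A/M_i$, and the profile reads $\|\dot\omega|\,\bigoplus_{i\in S}A/M_i\|=\sum_{i\in S}\lambda_i$. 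If $\dot\rho=\sum_i\mu_i\dot\rho_i$ has a smaller profile, then taking $S=\{i\}$ gives $\mu_i\le\lambda_i$ for each $i$; since $\sum_i\mu_i=1=\sum_i\lambda_i$ this forces $\mu_i=\lambda_i$, hence equality of profiles. Thus $\dot\omega$ is maximally mixed on $B$, and so $\omega$ is maximally mixed on $A$.

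For (i), the symmetry built into the definition lets me reduce to producing a \emph{single} state $\rho\in\overline{\omega\circ\ea}$ with $\rho(J_A)=0$: granting this, maximal mixedness gives $\omega\in\overline{\rho\circ\ea}$, whence $\|\omega|J_A\|\le\|\rho|J_A\|=0$. To manufacture $\rho$ I would seek a net $\psi_k\in\ea$ that compresses $J_A$ to zero, namely $\psi_k(b)\to0$ weakly for each $b\in J_A$ (while $\psi_k(1)=1$); any weak* cluster point $\rho$ of $\omega\circ\psi_k$ then annihilates $J_A$ and, being a cluster point of $\omega\circ\ea$, lies in $\overline{\omega\circ\ea}$. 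Concretely one looks for contractions $a_k\in A$ with $\|1-a_k^*a_k\|\to0$ and $a_k^*e a_k\to0$ weak*, and sets $\psi_k(x)=a_k^*x a_k+(1-a_k^*a_k)^{1/2}x(1-a_k^*a_k)^{1/2}$. The existence of such asymptotically isometric $a_k$ that nonetheless drive the support $e$ of $J_A$ to $0$ is the heart of the matter and the step I expect to be hardest. It is here that the hypothesis is used: because $J_A$ is contained in every maximal ideal it contains no nonzero central direct summand of $A$ (no nonzero central projection of $A$ lies under $e$), which is exactly the obstruction that would otherwise force $\psi_k$ to fix that central projection; the unilateral shift in the Toeplitz algebra, where $a_k=S^k$ sends the ideal $\kh$ to $0$, is the guiding model.

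Finally the displayed consequence. Part (i) shows $S_m(A)\subseteq S(A/J_A)$. Taking $n=1$ in (ii), every state that annihilates a single maximal ideal $M$ is maximally mixed; and since $\bigcap_M M=J_A$, equivalently $\bigcap_M\overline{M}=0$ in $A/J_A$, a Hahn--Banach separation argument shows that the states annihilating some maximal ideal are weak* dense in $S(A/J_A)$. Hence $S_m(A)$ contains a weak*-dense subset of $S(A/J_A)$ and is contained in $S(A/J_A)$, so it is weak* dense in $S(A/J_A)$, as claimed.
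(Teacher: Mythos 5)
Your reduction of (i) is sound as far as it goes: since maps in $\ea$ preserve ideals and are contractive, it suffices to exhibit one state $\rho\in\overline{\omega\circ\ea}$ with $\rho(J_A)=0$, and this is indeed what the paper's proof implicitly establishes (for \emph{every} state $\omega$). But the step you yourself flag as ``the heart of the matter'' is not merely missing --- the concrete criterion you propose cannot hold. The projection $e$ is a \emph{central} projection in $A^{\sharp\sharp}$, so for any contraction $a\in A$ one has $a^*ea=ea^*a$, and hence $\|1-a_k^*a_k\|\to0$ forces $a_k^*ea_k=ea_k^*a_k\to e$ \emph{in norm}; your two requirements are mutually exclusive whenever $J_A\ne0$. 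More generally, every unital $\psi\in\ea$ satisfies $\tilde\psi(e)=\sum a_i^*ea_i=e\sum a_i^*a_i=e$ for the normal extension to $A^{\sharp\sharp}$, so $\|(\omega\circ\psi)|J\|=\tilde\omega(\tilde\psi(e))=\|\omega|J\|$ exactly, for every ideal $J$: the ideal-norm profile is \emph{conserved} along the orbit and can only drop at a weak* limit (by lower semicontinuity of $\sigma\mapsto\|\sigma|J\|$). Even in your Toeplitz model this is what happens: $S^{*k}eS^k=e$, and the mechanism is $S^{*k}bS^k\to0$ in norm for each $b\in\kh$, i.e.\ the correct (and attainable) goal is $\omega(\psi_k(b))\to0$ for each $b\in J_A$ --- but proving such a net exists for an arbitrary state on an arbitrary $A$ is precisely the content of part (i). The paper does not construct maps at all: it assumes $\overline{\omega\circ\ea}\cap S(A/J_A)=\emptyset$, separates by a positive $h$ via Hahn--Banach, and then applies the numerical-range characterization of $\overline{\ea(h)}$ from \cite{M} to the element $f(h)$, where $f$ compresses $W_A(h)$ onto $W_{A/J_A}(h)$; the key point making $f$ usable is that $K+J_A\ne A$ for every proper ideal $K$ (as $K$ lies in some maximal ideal), so $W_{A/K}(f(h))\subseteq W_{A/K}(h)$ for all $K$, yielding $\omega(f(h))>\alpha$ against $W_A(f(h))\subseteq[c,\alpha]$. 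Some such global argument is needed; your heuristic about the absence of central summands under $e$ does not produce it.

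Your part (ii) is correct and is in substance the paper's proof (Chinese remainder theorem, simplicity of the $A/M_i$, transfer by Remark \ref{req}); your explicit profile computation on $\bigoplus_{i\in S}A/M_i$ just makes quantitative what the paper delegates to Example \ref{lm}. The final density step, however, contains an error: it is false that states annihilating \emph{some single} maximal ideal are weak* dense in $S(A/J_A)$ --- take $A$ with exactly two maximal ideals and $A/J_A\cong\bc\oplus\bc$; then such states are just the two ``Dirac'' states, not dense in the simplex of states. Hahn--Banach separation needs a \emph{convex} set, and the right one is the set of states annihilating some \emph{finite intersection} $M_1\cap\dots\cap M_n$ (a convex combination of states killing two such intersections kills their common refinement), which is norming for $A/J_A$ because $A/J_A\to\oplus_MA/M$ is isometric; this is exactly how the paper concludes, citing \cite[4.3.9]{KR}. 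This slip is repairable with your own (ii) for general $n$, but as written the $n=1$ reduction does not suffice.
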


\begin{proof}(i) Let $D=S(A/J_A)$ and $\omega$ a maximally mixed state on $A$.  Suppose that $\omega\notin D$. Then $\overline{\omega\circ\ea}\cap D=\emptyset$, otherwise this intersection would be a weak* closed proper
$\ea$-invariant subset of $\overline{\omega\circ\ea}$, which would contradict the fact that $\omega$ is maximally mixed.  Thus by the Hahn-Banach theorem there exist $\alpha,\beta\in\br$ and $h\in A_h$ such that
\begin{equation}\label{m1}\rho(h)\leq\alpha\ \forall\rho\in D\ \ \mbox{and}\ \ \omega(\psi(h))\geq\beta>\alpha\ \forall\psi\in\ea.\end{equation} Replacing $h$ by $h+\gamma1$ for a sufficiently large $\gamma\in\br_+$ (and modifying $\alpha, \beta$), we may assume that $h$ is positive. Then the first relation in (\ref{m1}) means that $\|\dot{h}\|\leq\alpha$, where $\dot{h}$ denotes the coset of $h$ in $A/J_A$. The (algebraic) numerical range $W_{A/J_A}(h)$ of $\dot{h}$ is an interval, say $[c,d]$, contained in the numerical range $W_A(h)$ of $h$, which is an interval, say $[a,b]$; note that $a\leq c\leq d=\|\dot{h}\|\leq b=\|h\|$. Let $f:[a,b]\to[c,d]$ be the function, which act as the identity on $[c,d]$, and maps $[a,c]$ into $\{c\}$ and $[d,b]$ into $\{d\}$. For every proper ideal $K$ in $A$ the quotient $A/(K+J_A)$ is non-zero, for $K$ is contained in a maximal ideal $M$ and hence $K+J_A\subseteq M+J_A=M\ne A$. Since $W_{A/(K+J_A)}(h)\subseteq W_{A/K}(h)\cap W_{A/J_A}(h)$, this intersection is not empty, hence the interval $W_{A/K}(h)$ intersects $[c,d]$ and is therefore mapped by $f$ into itself. The numerical range $W_{A/K}(f(h))$ of the coset of $f(h)$ in $A/K$ is just the convex hull of the spectrum $\sigma_{A/K}(f(h))=f(\sigma_{A/K}(h)$, hence $W_{A/K}(f(h))\subseteq f(W_{A/K}(h))\subseteq W_{A/K}(h)$. This inclusion implies that $f(h)\in\overline{\ea(h)}$ by \cite{M}, hence $\omega(f(h))>\alpha$ by the second relation in (\ref{m1}). Since $\omega$ is a state, it follows that $W_A(f(h))$ intersects $(\alpha,\infty)$. But this is a contradiction since $W_A(f(h))$ is the convex hull of the spectrum $\sigma_A(f(h))=f(\sigma_A(h))\subseteq[c,d]=[c,\|\dot{h}\|]\subseteq[c,\alpha]$. Thus $\omega\in D$.

(ii) By the Chinese remainder theorem \cite[6.3]{Gr} there is a natural isomorphism $A/\cap_{j=1}^nM_j\cong\oplus_{j=1}^nA/M_j$, thus we may regard $\omega$ as a state on $\oplus_{j=1}^nA/M_j$. Since the algebras $A/M_j$ are simple, all states on them are maximally mixed by Example \ref{lm}. The same then holds for their direct sum, so all states on $A/\cap_{j=1}^nM_j$ are maximally mixed and (ii) follows by  Remark \ref{req}.

The set of all states that annihilate some finite intersection of maximal ideals of $A$ is convex and norming for $A/J_A$ (since the natural map $A/J_A\to\oplus_MA/M$, where the sum is over all maximal ideals in $A$, is a monomorphism, thus  isometric), hence weak* dense in $S(A/J_A)$  \cite[4.3.9]{KR}.
\end{proof}

\begin{re}A similar argument as in \cite[3.2]{ART} shows that the set $S_m(A)$ of all maximally mixed states on a C$^*$-algebra $A$ is always norm closed.\end{re} 

Recall that a C$^*$-algebra $A$ is {\em weakly central} if different maximal ideals of $A$ have different intersection with the center $Z$ of $A$. 

\begin{theorem}\label{L}If the set $S_m(A)$ of all maximally mixed states is weak* closed (which by Theorem \ref{t} just means that $S_m(A)=S(A/J_A)$), then each primitive ideal of $A$ containing $J_A$ is maximal. If $A$ is weakly central, then the converse also holds: if each primitive ideal containing $J_A$ is maximal, then $S_m(A)=S(A/J_A)$.
\end{theorem}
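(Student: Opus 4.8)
The plan is to reformulate maximal mixedness through Theorem~\ref{H1} and then handle the two implications separately; only the converse will use weak centrality. By Theorem~\ref{H1}, for states $\omega,\rho$ we have $\rho\in\overline{\omega\circ\ea}$ if and only if $\|\rho|J\|\le\|\omega|J\|$ for every ideal $J$. Hence, writing $N_\omega(J):=\|\omega|J\|$, a state $\omega$ is maximally mixed precisely when $N_\omega$ is minimal, in the pointwise order over the ideals of $A$, among the functions $\{N_\rho:\rho\in S(A)\}$: no state $\rho$ satisfies $N_\rho\le N_\omega$ with $N_\rho\ne N_\omega$. The parenthetical equivalence is immediate from Theorem~\ref{t}: $S_m(A)\subseteq S(A/J_A)$ is weak* dense while $S(A/J_A)$ is weak* closed, so $S_m(A)$ is weak* closed iff it fills $S(A/J_A)$.

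For the first implication I will argue contrapositively: assuming some primitive ideal $P\supseteq J_A$ fails to be maximal, I produce a state in $S(A/J_A)\setminus S_m(A)$. Since $P$ is not maximal it is properly contained in a maximal ideal $M$, and both $P$ and $M$ are primitive. Choose pure states $\omega$ and $\rho$ whose GNS kernels are exactly $P$ and $M$ (take a vector state in an irreducible representation with the prescribed kernel). The key computation is that for a pure state with associated primitive ideal $Q$ one has $\|\,\cdot\,|J\|=1$ when $J\not\subseteq Q$ and $=0$ when $J\subseteq Q$; the nonvanishing case holds because $\pi(J)$ acts nondegenerately in the irreducible GNS representation $\pi$, so an approximate unit of $J$ tends strongly to $1$ and drives the vector state to norm one. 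Thus $N_\omega$ and $N_\rho$ are the indicators of $\{J:J\not\subseteq P\}$ and $\{J:J\not\subseteq M\}$. Because $P\subseteq M$, the implication $J\not\subseteq M\Rightarrow J\not\subseteq P$ gives $N_\rho\le N_\omega$, and at $J=M$ we have $N_\rho(M)=0<1=N_\omega(M)$ since $M\not\subseteq P$. Hence $\rho\in\overline{\omega\circ\ea}$ while $\omega\notin\overline{\rho\circ\ea}$, so $\omega$ is not maximally mixed; yet $\omega(J_A)=0$ because $J_A\subseteq P$, so $\omega\in S(A/J_A)$. This contradicts $S_m(A)=S(A/J_A)$ and proves the implication, with no centrality hypothesis needed.

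For the converse, assume $A$ is weakly central and every primitive ideal containing $J_A$ is maximal. The strategy is to show that $A/J_A$ has Hausdorff primitive spectrum and then invoke Corollary~\ref{Hc} together with Remark~\ref{req}. First, $\mathrm{Prim}(A/J_A)$ is canonically identified with the hull $\{P\in\mathrm{Prim}(A):P\supseteq J_A\}$; by hypothesis every such $P$ is maximal, and by definition every maximal ideal contains $J_A$, so this hull equals the set $\mathrm{Max}(A)$ of all maximal ideals. Being a hull it is closed in the compact space $\mathrm{Prim}(A)$ ($A$ is unital), hence compact. Now consider the restriction map $\mathrm{Max}(A)\to\Delta(Z)$, $M\mapsto M\cap Z$, into the compact Hausdorff maximal ideal space of the center $Z$ of $A$. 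This map is continuous, surjective (each maximal ideal $m$ of $Z$ equals $M\cap Z$ for any maximal $M\supseteq mA$), and injective precisely because $A$ is weakly central. A continuous bijection from a compact space onto a Hausdorff space is a homeomorphism, so $\mathrm{Max}(A)\cong\Delta(Z)$ is Hausdorff; that is, $\mathrm{Prim}(A/J_A)$ is Hausdorff. By Corollary~\ref{Hc} every state on $A/J_A$ is then maximally mixed, and by Remark~\ref{req} every state on $A$ annihilating $J_A$ is maximally mixed, giving $S(A/J_A)\subseteq S_m(A)$; with $S_m(A)\subseteq S(A/J_A)$ from Theorem~\ref{t} we conclude $S_m(A)=S(A/J_A)$.

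The routine inputs (the norm computation for pure states, compactness of $\mathrm{Prim}(A)$, and the continuity and surjectivity of the center‑restriction map) are standard. The step I expect to carry the real weight is the converse's topological identification: pinning down that $\mathrm{Prim}(A/J_A)$ is exactly $\mathrm{Max}(A)$, and that weak centrality is precisely what upgrades the always‑present continuous surjection $\mathrm{Max}(A)\to\Delta(Z)$ to a homeomorphism. This is where the hypothesis is used in full force and what makes Corollary~\ref{Hc} applicable; without weak centrality the injectivity, and hence the Hausdorff conclusion, can fail.
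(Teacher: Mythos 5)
Your proof is correct, but your forward implication is genuinely different from the paper's. For that direction the paper (following the idea of \cite[3.15]{ART}) fixes a non-maximal primitive $P\supseteq J_A$, passes to $A/P$, approximates a state of $A/M$ by convex combinations of vector states, and uses the Kadison transitivity theorem to realize vector states as $x\mapsto\theta(u^*xu)$, concluding that such a state lies in $\overline{\theta\circ{\rm E}(A/P)}$ but not conversely. You instead exploit Theorem \ref{H1} directly: for a pure state with GNS kernel $Q$, the function $J\mapsto\|\cdot|J\|$ is the indicator of $\{J:J\not\subseteq Q\}$ (your nondegeneracy/approximate-unit argument for the value $1$ is the standard one and is sound), so comparing the indicators attached to $P\subsetneq M$ immediately yields a state in $S(A/J_A)\setminus S_m(A)$. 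This is shorter and more self-contained than the paper's route --- no transitivity theorem, no vector-state approximation --- and it makes transparent that weak centrality plays no role in this direction, which the paper's proof also observes but less directly. For the converse, your argument is essentially the paper's: both end by identifying the primitive spectrum of $A/J_A$ with $\Delta(Z)$ via $M\mapsto M\cap Z$ and invoking Corollary \ref{Hc}. Two small differences: the paper first reduces to $J_A=0$, citing \cite[3.10]{AG} that quotients of weakly central algebras are weakly central, whereas you work directly with ${\rm Max}(A)$ as the (closed, hence compact) hull of $J_A$ inside the compact space ${\rm Prim}(A)$ and use weak centrality of $A$ itself, so you avoid the \cite{AG} citation altogether; and you supply the compactness argument (continuous bijection from a quasi-compact space onto a Hausdorff space is a homeomorphism) that the paper leaves implicit. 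One cosmetic remark: surjectivity of ${\rm Max}(A)\to\Delta(Z)$, while true and standard, is not actually needed --- a continuous injection from a quasi-compact space into a Hausdorff space already shows ${\rm Max}(A)$ is Hausdorff.
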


\begin{proof}By Remark \ref{req} a state $\omega$ on  $A/J_A$ is maximally mixed if and only if it is maximally mixed on $A$.  By \cite[3.10]{AG} the quotients of weakly central C$^*$-algebras are weakly central, so in particular $A/J_A$ is weakly central.  In this way we reduce the proof to the algebra $A/J_A$ (instead of $A$), which has strong radical $0$. Thus we may assume that $J_A=0$.

Suppose now that $S_m(A)=S(A)$. Then $S_m(A/P)=S(A/P)$ for each primitive ideal $P$ of $A$ by Remark \ref{req}. If $M$ is a maximal ideal of $A$ containing $P$, then $A/M$ is a quotient of $A/P$, hence each state $\rho\in S(A/M)$ can be regarded as a state on $A/P$ and therefore can be weak* approximated by convex combinations of vector states on $A/P$ (where $A/P$ has been faithfully represented on a Hilbert space). Since $A/P$ is primitive, as a consequence of the Kadison transitivity theorem each vector state is of the form $x\mapsto \theta(u^*xu)$ for a fixed state $\theta$ on $A/P$ with $\theta(M/P)\ne0$, where $u\in A/P$ is unitary \cite[5.4.5]{KR}. Thus $\rho\in\overline{\theta\circ{\rm E}(A/P)}$. But $\rho(M/P)=0$, while $\theta(M/P)\ne0$ if $M\ne P$, hence $\theta\notin\overline{\rho\circ{\rm E}(A/P)}$ if $M\ne P$. Thus $\rho$ can not be maximally mixed (on $A/P$ and hence also on $A$) if $P$ is not maximal.
This argument, which we have found in \cite[proof of 3.15]{ART}, shows that in general the equality $S_m(A)=S(A)$ can hold only if  all primitive ideals containing $J_A$ are maximal. If $A$ is weakly central and by our reduction above $J_A=0$, then the assumption that all primitive ideals are maximal implies that the primitive spectrum $\check{A}$ of $A$ is homeomorphic to the maximal ideal space $\Delta$ of $Z$
(via the map $\check{A}\ni M\mapsto M\cap Z\in\Delta$). Thus $\check{A}$ is Hausdorff and in this case Corollary \ref{Hc} shows that all states on $A$ are maximally mixed.
\end{proof}

It is well-known that each W$^*$-algebra $\crr$ is weakly central. If $\crr$ is properly infinite,  each primitive ideal $P$ containing $J_{\crr}$ is maximal. (Namely, by  by \cite[2.3]{Ha3} or \cite[8.7.21]{KR} the ideal $M:=P+J_{\crr}\supseteq\crr(P\cap\cz)+J_{\crr}$ is maximal, and $M=P$ if $P\supseteq J_{\crr}$.) So we can state the following corollary.

\begin{co}In a properly infinite von Neumann algebra $\crr$ maximally mixed states are just the  states that annihilate the strong radical $J_{\crr}$.
\end{co}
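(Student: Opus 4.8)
The plan is to obtain the corollary as a direct application of the ``converse'' half of Theorem \ref{L} to the C$^*$-algebra $A=\crr$, so the entire task reduces to checking the two hypotheses of that implication for a properly infinite von Neumann algebra. Recall that the converse direction of Theorem \ref{L} asserts: if $A$ is weakly central and every primitive ideal of $A$ containing $J_A$ is maximal, then $S_m(A)=S(A/J_A)$. Applied to $\crr$, this is exactly the statement that the maximally mixed states coincide with the states annihilating $J_{\crr}$, once one recalls that $S(\crr/J_{\crr})$ denotes precisely the set of states on $\crr$ that vanish on $J_{\crr}$.

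First I would record the two structural inputs about $\crr$ that feed into Theorem \ref{L}. The first is that every W$^*$-algebra is weakly central, which is standard: distinct maximal ideals of a von Neumann algebra have distinct intersections with the center $\cz$. The second, and the only substantive point, is that for a \emph{properly infinite} $\crr$ every primitive ideal $P$ with $P\supseteq J_{\crr}$ is already maximal. This is exactly the observation made in the paragraph preceding the corollary, resting on \cite[2.3]{Ha3} (or \cite[8.7.21]{KR}): the ideal $M:=P+J_{\crr}\supseteq\crr(P\cap\cz)+J_{\crr}$ is maximal, whence $M=P$ whenever $P\supseteq J_{\crr}$, so $P$ itself is maximal.

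With these two facts in hand, Theorem \ref{L} applies verbatim and yields $S_m(\crr)=S(\crr/J_{\crr})$, which is the assertion of the corollary. As a sanity check one may split the equality into its two inclusions: the inclusion $S_m(\crr)\subseteq S(\crr/J_{\crr})$ is already Theorem \ref{t}(i), valid for any unital C$^*$-algebra, so only the reverse inclusion $S(\crr/J_{\crr})\subseteq S_m(\crr)$ genuinely consumes the weak centrality of $\crr$ together with the maximality of the primitive ideals lying over $J_{\crr}$. The main -- indeed the only -- obstacle is this proper-infiniteness input, namely verifying that every primitive ideal containing $J_{\crr}$ is maximal; this is where proper infiniteness is essential (it fails in general, e.g.\ for finite factors of higher type where the strong radical can sit below nonmaximal primitive ideals) and where the cited ideal-structure results are indispensable. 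Everything else is bookkeeping, and no further computation is required.
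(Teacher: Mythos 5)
Your proposal is correct and matches the paper's own argument exactly: the paper likewise obtains the corollary by applying the converse direction of Theorem \ref{L}, using that every W$^*$-algebra is weakly central and that, by \cite[2.3]{Ha3} or \cite[8.7.21]{KR}, the ideal $M:=P+J_{\crr}$ is maximal, so $P=M$ whenever the primitive ideal $P$ contains $J_{\crr}$. One small correction to your closing aside: finite \emph{factors} cannot witness the failure (a finite factor is simple or a matrix algebra, so its primitive ideals are maximal); the paper's counterexample in the finite case is $\crr=\oplus_n{\rm M}_n(\bc)$, where ideals $\crr t$, with $t$ a maximal ideal of the center, are primitive but not all maximal.
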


If $\crr$ is finite, primitive ideals are not necessarily maximal. (By \cite[4.7]{Ha} any ideal  $\crr t$, where $t$ is a maximal ideal of the center of $\crr$, is primitive, while using the central trace one can show that not all such ideals are maximal in $\crr=\oplus_{n}{\rm M}_n(\bc)$, for example.) Thus  the set of maximally mixed states on $\crr$ is not  weak* closed. 

{\em Throughout the rest of the paper $\crr$ is a W$^*$-algebra, $\cz$ its center and $\Delta$ the maximal ideal space of $\cz$. For each $t\in\Delta$ let $M_t$ be the unique maximal ideal of $\crr$ that contains $t$ \cite[8.7.15]{KR}). Note that $\phi(\crr t)=\phi(\crr)t\subseteq t$ for each  $\cz$-module map $\phi:\crr\to\cz$.} 

To prove that tracial states are maximally mixed, we need a lemma.

\begin{lemma}\label{mod4}A bounded $\cz$-module map
$\phi:\crr\to\cz\subseteq\crr$ preserves all  ideals of $\crr$ if and only if $\phi(M_t)\subseteq t$ for each $t\in\Delta$. If $\crr$ is properly infinite, this is equivalent to $\phi(J_{\crr})=0$. 
\end{lemma}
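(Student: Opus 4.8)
The plan is to prove the lemma in three stages: the easy implication of the first equivalence, its converse (the substantial part), and then the two directions of the properly infinite statement.

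For the easy implication, suppose $\phi$ preserves every ideal. Then $\phi(M_t)\subseteq M_t$, and since $\phi$ takes values in $\cz$ this gives $\phi(M_t)\subseteq M_t\cap\cz$. As $M_t$ is a proper ideal, $M_t\cap\cz$ is a proper ideal of $\cz$ containing the maximal ideal $t$, hence equal to $t$; thus $\phi(M_t)\subseteq t$. For the converse I would fix an ideal $J$ and an element $x\in J$ and try to show $\phi(x)\in J$. Since $\phi(x)\in\cz$ and $J\cap\cz$ is a closed ideal of the commutative C$^*$-algebra $\cz\cong C(\Delta)$ with hull $H=\{t\in\Delta:\, J\cap\cz\subseteq t\}$, we have $\phi(x)\in J$ iff $\phi(x)\in\bigcap_{t\in H}t=J\cap\cz$, i.e. iff $\phi(x)\in t$ for every $t\in H$. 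Given the hypothesis $\phi(M_t)\subseteq t$, everything therefore reduces to the following claim: if $J\cap\cz\subseteq t$ then $J\subseteq M_t$ (for then $\phi(x)\in\phi(J)\subseteq\phi(M_t)\subseteq t$).

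This claim is the main obstacle, because an arbitrary closed ideal of $\crr$ is not determined by its central part and $M_t\neq\overline{\crr t}$ in general (as $\crr=\bh$ already shows). I would prove it by exhibiting, for each $t$ with $J\cap\cz\subseteq t$, a primitive ideal $P$ of $\crr$ with $J\subseteq P$ and $P\cap\cz=t$: such $P$ contains $\overline{\crr t}$ and is proper, hence is contained in the unique maximal ideal over $\overline{\crr t}$, which is $M_t$, giving $J\subseteq M_t$. To produce $P$, pass to the unital C$^*$-algebra $B=\crr/J$ with its central subalgebra $\iota(\cz)\cong\cz/(J\cap\cz)$, where $t$ descends to a maximal ideal $\dot t$ of $\iota(\cz)$. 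The key point is that the ideal $B\dot t$ generated by $\dot t$ is proper: extending the character of $\iota(\cz)$ with kernel $\dot t$ to a state $\psi$ on $B$ and using the Cauchy--Schwarz inequality $|\psi(bm)|^2\leq\psi(bb^*)\psi(m^*m)$ together with $m^*m\in\dot t\subseteq\ker\psi$ for $m\in\dot t$, one finds $\psi(B\dot t)=0$ while $\psi(1)=1$. Hence $B\dot t$ lies in some primitive ideal $\tilde P$ of $B$, and $\tilde P\cap\iota(\cz)=\dot t$ by maximality of $\dot t$; the preimage $P$ of $\tilde P$ in $\crr$ is primitive, contains $J$, and satisfies $P\cap\cz=t$, as required. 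This proves the claim, and with it the converse and the first equivalence.

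For the final statement, one implication needs no hypothesis: if $\phi(M_t)\subseteq t$ for all $t$, then since every maximal ideal of $\crr$ is some $M_t$ we have $J_{\crr}=\bigcap_t M_t$, whence $\phi(J_{\crr})\subseteq\bigcap_t\phi(M_t)\subseteq\bigcap_t t=0$. For the reverse implication I would use proper infiniteness precisely through the fact noted above that, for properly infinite $\crr$, $\overline{\crr t}+J_{\crr}$ is maximal: since $\overline{\crr t}$ is primitive (\cite[4.7]{Ha}), the sum $\overline{\crr t}+J_{\crr}$ is a maximal ideal (closed, being a sum of two closed ideals), and since it contains $\overline{\crr t}$ it must coincide with $M_t$. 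Assuming $\phi(J_{\crr})=0$, it then follows that $\phi(M_t)=\phi(\overline{\crr t}+J_{\crr})\subseteq\phi(\overline{\crr t})+\phi(J_{\crr})$; the first summand is contained in $t$ because $\phi(\crr t)=\phi(\crr)t\subseteq t$ and $\phi$ is norm continuous (so $\phi(\overline{\crr t})\subseteq t$), while the second vanishes, giving $\phi(M_t)\subseteq t$. The genuinely new ingredient is the state-extension/Cauchy--Schwarz argument establishing the claim in the third paragraph; the rest is bookkeeping with Gelfand duality for $\cz$ and the cited structure of the maximal ideals of $\crr$.
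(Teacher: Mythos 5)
Your proof is correct, and its skeleton is the same as the paper's: both reduce ideal preservation to the single containment ``$J\cap\cz\subseteq t$ implies $J\subseteq M_t$'' via Gelfand duality for $\cz$ (the paper phrases this as $J\subseteq J(K)=\bigcap_{t\in\Delta_K}M_t$, where $J(K)$ is the largest ideal with central part $K=J\cap\cz$), and both handle the properly infinite case through the identity $M_t=\crr t+J_{\crr}$. The real difference is how the key containment is obtained. The paper simply cites \cite[8.7.15, 8.7.16]{KR}, whereas you re-prove exactly what is needed: you extend the character of $\cz/(J\cap\cz)$ with kernel $\dot t$ to a state $\psi$ on $\crr/J$ and use Cauchy--Schwarz ($m^*m\in\dot t\subseteq\ker\psi$) to see that the ideal generated by $\dot t$ in $\crr/J$ is proper, so that $J$ together with $t$ generates a proper ideal of $\crr$, which must then lie in the unique maximal ideal $M_t$ over $t$. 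This is in essence a re-derivation of the relevant part of the Glimm/Kadison--Ringrose structure theory; it makes the lemma self-contained at the cost of length, while the paper's citation-based proof is shorter but opaque without \cite{KR} at hand. (Your detour through a primitive ideal $P\supseteq J$ with $P\cap\cz=t$ is harmless but unnecessary: any proper ideal containing $J\cup t$ already sits inside $M_t$ by uniqueness of the maximal ideal over $t$.) In the properly infinite part you are not quite as self-contained as you suggest: the maximality of $\overline{\crr t}+J_{\crr}$ does not follow from primitivity of $\overline{\crr t}$ (\cite[4.7]{Ha}) alone but needs the result that $P+J_{\crr}$ is maximal for primitive $P$ in a properly infinite algebra, i.e.\ the same \cite[2.3]{Ha3} or \cite[8.7.21]{KR} that the paper invokes directly; with that citation made explicit, your argument agrees with the paper's, and your observation that the direction ``$\phi(M_t)\subseteq t$ for all $t$ implies $\phi(J_{\crr})=0$'' needs no properness hypothesis matches the paper's computation $\phi(J_{\crr})\subseteq\bigcap_{t\in\Delta}t=0$.
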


\begin{proof}Let $J$ be an
ideal in $\crr$ and $K=J\cap\cz$. As an ideal in $\cz$, $K$ can be identified with the set of all continuous functions on $\Delta$ than vanish on some closed subset $\Delta_K$ of $\Delta$, hence $K$ is the intersection of a family $\{t: t\in \Delta_K\}$ of maximal ideals of $\cz$. By \cite[8.7.15]{KR} there exists the largest ideal $J(K)$ in $\crr$ such that $J(K)\cap\cz=K$, and it follows from \cite[8.7.16]{KR} that $J(K)=\cap_{t\in\Delta_K}M_t$. Now $J\cap\cz=K$ implies that $J\subseteq J(K)$. Thus, if $\phi$ has the property that $\phi(M_t)\subseteq t$ for all $t\in\Delta$, then $\phi(J)\subseteq\phi(J(K))\subseteq\cap_{t\in\Delta_K}\phi(M_t)\subseteq\cap_{t\in\Delta_K}t=K\subseteq J$. 

If $\crr$ is properly infinite, then $M_t=\crr t+J_{\crr}$ for each $t\in\Delta$ by \cite [8.7.21 (1)]{KR}. Thus if $\phi(J_{\crr})=0$, then we have $\phi(M_t)=\phi(\crr)t\subseteq t$ for all $t\in\Delta$. Conversely, if $\phi(M_t)\subseteq t$ for all $t$, then $\phi(J_{\crr})=\phi(\cap_{t\in\Delta}M_t)\subseteq\cap_{t\in\Delta}t=0$.
\end{proof}

\begin{co}\label{mod6}A unital positive $\cz$-module map $\phi:\crr\to\cz\subseteq\crr$ is in the point-norm closure of elementary such maps (that is, $\phi\in\overline{\er}^{\rm{p.n.}}$) if and only if $\phi(M_t)\subseteq t$ for each $t\in\Delta$.
\end{co}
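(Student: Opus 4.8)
I would prove Corollary~\ref{mod6} by combining the characterization of ideal-preserving module maps in Lemma~\ref{mod4} with the approximation machinery of Theorem~\ref{th}. The necessity direction is essentially immediate: if $\phi\in\overline{\er}^{\rm p.n.}$, then $\phi$ is a point-norm limit of elementary maps $x\mapsto\sum_i a_i^*xa_i$ with $\sum_i a_i^*a_i=1$ and $a_i\in\crr$; each such elementary map preserves every ideal of $\crr$ (since ideals are closed under $x\mapsto a^*xa$), and this property passes to point-norm limits because ideals are norm-closed. Hence $\phi$ preserves all ideals, and by Lemma~\ref{mod4} this forces $\phi(M_t)\subseteq t$ for each $t\in\Delta$.

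\medskip

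For the converse, suppose $\phi(M_t)\subseteq t$ for all $t\in\Delta$; by Lemma~\ref{mod4} this means $\phi$ preserves all ideals of $\crr$. The key observation is that $\phi$ is a unital positive $\cz$-module map with range in the \emph{injective} (indeed abelian) von Neumann subalgebra $\cz\subseteq\crr$ containing the center. Since positive $\cz$-module maps into a commutative algebra are automatically completely positive, $\phi$ is a completely contractive unital $\cz$-module map, so Theorem~\ref{th} (with $\ca=\cz$) applies and places $\phi$ in the \emph{point-weak*} closure of $\er$. The remaining work is to upgrade this from point-weak* to point-norm convergence, which is where I expect the one genuine subtlety to lie.

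\medskip

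The bridge from point-weak* to point-norm should come from the ideal-preservation hypothesis together with the central decomposition of $\crr$. My plan is to use the structure of the center: writing $\cz=C(\Delta)$ and decomposing over $\Delta$, the condition $\phi(M_t)\subseteq t$ means that $\phi$ descends, fiberwise, to a unital map $\crr/M_t\to\bc$ (i.e.\ a state on each simple quotient $\crr/M_t$). One then assembles the desired elementary approximants directly on the level of fibers or on finite partitions of $\Delta$ into clopen sets (exactly as in the partition argument of Lemma~\ref{leh1}), and uses that on each piece the relevant states on $\crr/M_t$ can be reached in norm by finite elementary maps. The point-norm estimate is obtained because the approximating elementary maps and $\phi$ agree on $\cz$ (both being the identity there) and because positivity plus the unital normalization control the off-center error uniformly in norm, much as in the final Cauchy--Schwarz estimate of the proof of Lemma~\ref{leh}.

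\medskip

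The main obstacle is precisely the last step: promoting weak* approximation to \emph{norm} approximation at each point $x\in\crr$. Theorem~\ref{th} only delivers point-weak* convergence, and in general a unital completely positive map in the point-weak* closure of $\er$ need not lie in the point-norm closure. What rescues us here is the extra rigidity that $\phi$ maps into the \emph{center} and preserves all ideals: this collapses the approximation to a commutative, fiberwise problem over $\Delta$, where weak* and norm behavior on the abelian range can be synchronized via clopen partitions. I would therefore devote the bulk of the argument to making this reduction precise, treating the partition-into-clopen-pieces construction of Lemma~\ref{leh1} as the template and checking that the finite elementary maps built piecewise do converge to $\phi$ uniformly at each fixed $x$.
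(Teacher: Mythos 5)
Your necessity direction is fine and matches the routine argument (elementary maps with coefficients in $\crr$ preserve every ideal, point-norm limits preserve norm-closed ideals, and then Lemma \ref{mod4} gives $\phi(M_t)\subseteq t$). But your sufficiency direction has a genuine gap, and it is exactly at the step you flag as "the one genuine subtlety." The paper does \emph{not} obtain the point-norm statement by upgrading the point-weak* conclusion of Theorem \ref{th}; instead it invokes two external results, \cite[2.2]{M2} and \cite[2.1]{M3}, which assert precisely that a completely contractive map $\phi:\crr\to\cz\subseteq\crr$ preserving all ideals of $\crr$ lies in the \emph{point-norm} closure of elementary complete contractions $x\mapsto\sum_{j=1}^n a_j^*xb_j$ with $\|a\|\leq1$, $\|b\|\leq1$. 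That is the entire analytic content of the corollary; what remains in the paper is only the unitalization trick from the proof of Theorem \ref{th} (in the point-norm setting $a(k)^*b(k)\to\phi(1)=1$ in norm forces $\|b(k)-a(k)\|\to0$, so replacing $a(k)^*xb(k)$ by $a(k)^*xa(k)+\sqrt{1-a(k)^*a(k)}\,x\sqrt{1-a(k)^*a(k)}$ works in norm). Your proposal replaces the citation with a sketch, and the sketch does not go through as described.

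Concretely, two things break. First, the fiberwise plan reduces to: the state $\phi_t$ on $\crr/M_t$ can be "reached in norm by finite elementary maps." The tool available for this in the paper's toolkit is the numerical-range criterion \cite[3.3, 4.1]{M}, which applies to a \emph{single hermitian element} $h$: it places one element with smaller numerical range in $\overline{\ea(h)}$. Membership of the \emph{map} $\phi$ in the point-norm closure of $\er$ requires, for each finite set $F\subseteq\crr$ and $\varepsilon>0$, a single elementary map that is $\varepsilon$-close to $\phi$ simultaneously on all of $F$; simultaneous approximation at several (non-commuting, non-hermitian) elements is not delivered by numerical-range arguments, and handling it is exactly why \cite{M2} and \cite{M3} are substantial papers rather than a remark. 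Second, even for one element, your gluing step needs the fiberwise estimate $\|(\sum_i a_i^*xa_i-\phi(x))+M_t\|<\varepsilon$ to persist on a clopen neighborhood of $t$, i.e.\ some semicontinuity of $t\mapsto\|y+M_t\|$. Glimm's continuity result (used in Lemma \ref{leh1}) concerns the fibers $\crr/\crr t$, not the maximal ideals $M_t\supsetneq\crr t$, and you establish no such regularity; the partition argument of Lemma \ref{leh1} is not a template for this because there the function being partitioned is a fixed continuous function on $\Delta$. So as written your argument proves (modulo your own admission) only what Theorem \ref{th} already gives — point-weak* approximation — and the bridge to point-norm is asserted, not proved. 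The fix is simply to cite \cite[2.2]{M2} and \cite[2.1]{M3} as the paper does, or to reproduce their proofs, which is a different and considerably longer undertaking than your outline suggests.
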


\begin{proof}By \cite[2.2]{M2} and  \cite[2.1]{M3} each completely contractive map $\phi:\crr\to\cz\subseteq\crr$ which preserves all ideals of $\crr$ is in the point-norm closure of maps of the form $x\mapsto a^*xb=\sum_{j=1}^na_j^*xb_j$, where $n\in\bn$, $a_j,b_j\in\crr$, $a:=(a_1,\ldots,a_n)^T$, $b:=(b_1,\ldots,b_n)$,  $\|a\|\leq1$ and $\|b\|\leq1$. If $\phi$ is unital, then we can modify such maps to unital   maps in the same way as in the proof of Theorem \ref{th}, which shows that $\phi\in\overline{\er}^{\rm p.n.}$.
\end{proof}

\begin{co}\label{vn}Let $\omega$ be a state of the form $\omega=\mu\circ\phi$,
where $\mu=\omega|\cz$  and $\phi:\crr\to\cz$ is a unital positive  $\cz$-module map. If $\phi(M_t)\subseteq t$ for each $t\in\Delta$, then $\omega$ is maximally mixed. In particular tracial states are maximally mixed.
\end{co}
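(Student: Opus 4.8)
The plan is to verify directly the defining property of a maximally mixed state: given any state $\rho$ with $\rho\in\overline{\omega\circ\er}$, I must produce $\omega\in\overline{\rho\circ\er}$. The guiding idea is that, because $\phi$ is a unital $\cz$-module map, $\omega=\mu\circ\phi$ can be re-expressed as $\rho\circ\phi$ for \emph{any} state $\rho$ agreeing with $\omega$ on the center, after which the density statement of Corollary \ref{mod6} (which is exactly where the hypothesis $\phi(M_t)\subseteq t$ is consumed) finishes the argument. So the work splits into two easy steps: show $\rho|\cz=\omega|\cz$, and then transport $\omega$ back through $\phi$.

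The first step extracts $\rho|\cz=\omega|\cz$ from $\rho\in\overline{\omega\circ\er}$. By Theorem \ref{H1} this membership is equivalent to $\|\rho|J\|\leq\|\omega|J\|$ for every ideal $J$ of $\crr$. I apply this to the ideals $J=\crr z$ for central projections $z$. For any state $\sigma$ one has $\|\sigma|(\crr z)\|=\sigma(z)$, because every positive contraction $a\in\crr z$ satisfies $a=zaz\leq z$, so $\sigma(a)\leq\sigma(z)$ with equality attained at $a=z$. Hence $\rho(z)\leq\omega(z)$, and replacing $z$ by $1-z$ yields the reverse inequality, so $\rho(z)=\omega(z)$ for all central projections $z$. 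Since their linear span is norm-dense in $\cz$, it follows that $\rho|\cz=\omega|\cz=\mu$.

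With this in hand the identity $\omega=\rho\circ\phi$ is immediate: for $x\in\crr$ we have $\phi(x)\in\cz$, so $(\rho\circ\phi)(x)=\rho(\phi(x))=\mu(\phi(x))=\omega(x)$. Now by Corollary \ref{mod6} the hypothesis $\phi(M_t)\subseteq t$ gives $\phi\in\overline{\er}^{\rm p.n.}$, so I choose a net $\psi_k\in\er$ with $\psi_k\to\phi$ point-norm. Then $\rho(\psi_k(x))\to\rho(\phi(x))=\omega(x)$ for each $x$, that is $\rho\circ\psi_k\to\omega$ weak*, whence $\omega\in\overline{\rho\circ\er}$. This proves $\omega$ is maximally mixed. (One can bypass Corollary \ref{mod6}: since $\phi$ preserves ideals by Lemma \ref{mod4} and fixes $\cz$ pointwise, a direct computation gives $\|\omega|J\|=\|\mu|(J\cap\cz)\|$, while $\rho|\cz=\mu$ forces $\|\rho|J\|\geq\|\mu|(J\cap\cz)\|$; thus $\|\omega|J\|\leq\|\rho|J\|$ for all $J$ and Theorem \ref{H1} again yields $\omega\in\overline{\rho\circ\er}$.)

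For the tracial case, let $\tau$ be a tracial state on $\crr$. A properly infinite von Neumann algebra carries no tracial state: if $s_1,s_2$ are isometries with orthogonal ranges and $s_i^*s_i=1$, then $2\tau(1)=\tau(s_1s_1^*)+\tau(s_2s_2^*)=\tau(s_1s_1^*+s_2s_2^*)\leq\tau(1)$, which is absurd. Hence the central projection $1-z$ onto the properly infinite summand satisfies $\tau(1-z)=0$, and by Remark \ref{req} I reduce to the finite algebra $\crr z$, whose center is $\cz z$. On $\crr z$ I take $\phi$ to be the center-valued trace $T:\crr z\to\cz z$ and $\mu=\tau|\cz z$. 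Writing $T(x)$ as a norm-limit of averages of unitary conjugates $uxu^*$ (on each of which $\tau$ equals $\tau(x)$) gives both $\tau=\mu\circ T$ and the fact that $T$ carries every two-sided ideal into itself, since such averages stay in the ideal and the ideal is norm-closed; by Lemma \ref{mod4} this means $T(M_t)\subseteq t$. The corollary's hypotheses are then met on $\crr z$, so $\tau$ is maximally mixed there and hence on $\crr$. The main thing needing care in this last part is the reduction to the finite summand and the ideal-preservation of the center-valued trace; the general statement itself is the short argument of the preceding paragraphs.
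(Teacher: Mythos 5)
Your proof is correct, and its main engine coincides with the paper's: establish $\rho|\cz=\omega|\cz=\mu$, rewrite $\omega=\mu\circ\phi=\rho\circ\phi$, and invoke Corollary \ref{mod6} (the only place where $\phi(M_t)\subseteq t$ is consumed) to approximate $\phi$ point-norm by maps in $\er$, so that $\rho\circ\psi_k\to\omega$ and $\omega\in\overline{\rho\circ\er}$. Two local differences are worth recording. First, you extract $\rho|\cz=\omega|\cz$ from Theorem \ref{H1} applied to the ideals $\crr z$ for central projections $z$; this is valid, but the paper gets the same equality for free from the observation (already made in the proof of Theorem \ref{thvn}) that every $\psi\in\er$ restricts to the identity on $\cz$, so $\omega\circ\psi$ and $\omega$ agree on $\cz$ and this passes to weak* limits --- no norm-on-ideals characterization is needed. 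Your parenthetical bypass, deriving $\|\omega|J\|=\|\mu|(J\cap\cz)\|\leq\|\rho|J\|$ from Lemma \ref{mod4} and concluding via Theorem \ref{H1}, is also sound and in fact anticipates the argument of the paper's final theorem of Section 4. Second, for the tracial case the paper cites \cite[8.7.17]{KR} for the description $M_t=\{a\in\crr:\,\tau(a^*a)\in t\}$ and gets $\tau(M_t)\subseteq t$ in one line from the Cauchy--Schwarz inequality $\tau(a)^*\tau(a)\leq\tau(a^*a)$, whereas you obtain ideal preservation of the center-valued trace from the Dixmier approximation theorem and then apply the (trivial) converse direction of Lemma \ref{mod4}; both routes are correct, yours being more self-contained, the paper's shorter. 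Your explicit reduction to the finite summand --- no tracial state on a properly infinite algebra, then Remark \ref{req} --- carefully fills in a step the paper states without proof.
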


\begin{proof} Suppose that $\rho\in\overline{\omega\circ\er}$. Then $\rho|\cz=\omega|\cz=\mu$, hence $$\omega=\mu\circ\phi=(\rho|\cz)\circ\phi=\rho\circ\phi.$$  By Corollary \ref{mod6} $\phi$ can be approximated in the point-norm topology by a net of  maps $\phi_k\in\overline{\er}^{\rm p.n.}$. Then $\omega(x)=\lim_k(\rho(\phi_k(x)))$ for all $x\in\crr$. This shows that $\overline{\omega\in\rho\circ\er}$, so $\omega$ is maximally mixed. 

Any tracial state $\omega$ annihilates the properly infinite part of $\crr$, hence we assume that $\crr$ is finite. Then $\omega=(\omega|\cz)\circ\tau$, where $\tau$ is the central trace on $\crr$ \cite[8.3.10]{KR}.  Since $M_t$ is of the form  $M_t=\{a\in\crr:\, \tau(a^*a)\in t\}$ by \cite[8.7.17]{KR}, for $a\in M_t$ we have by the Schwarz inequality  
$\tau(a)^*\tau(a)\leq\tau(a^*a)\in t$. This implies that $\tau(a)\in t$. Thus $\tau(M_t)\subseteq t$, hence $\omega$ is maximally mixed by the first part of the corollary.
\end{proof}

Are all maximally mixed states on  W$^*$-algebras of the form specified in Corollary \ref{vn}? Not quite. To investigate this, we still need some preparation.

\begin{lemma}\label{mod}{\em For each state $\omega$ on $\crr$ there exists a positive $\cz$-module map $\phi:\crr\to\cz$ such that $\omega=(\omega|\cz)\circ\phi$ and $p:=\phi(1)$ is a projection with $\omega(p)=1$.} 
\end{lemma}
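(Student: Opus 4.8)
Write $\mu:=\omega|\cz$. The plan is to realise $\omega$ in the form $\mu\circ\phi$ for a positive unital $\cz$-module map $\phi\colon\crr\to\cz$; then $p:=\phi(1)=1$ is (trivially) a projection with $\omega(p)=1$. (Where a genuinely nontrivial $p$ comes from is the normal case: if $\mu$ is weak* continuous one simply reads off $\phi=\oz$ and $p=\oz(1)$, the support projection of $\mu$, from the decomposition (\ref{00}), and then $\omega(p)=\mu(p)=1$; the content of the lemma is the possibly singular case.) To organise the general case I would set up a separation argument. Let $\mathcal F$ be the set of all positive unital $\cz$-module maps $\crr\to\cz$. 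It is convex and, being a weak*-closed bounded set of module maps, compact in the point-weak* topology; it is nonempty because the abelian, hence injective, von Neumann algebra $\cz$ admits a positive unital $\cz$-module map of $\crr$ onto $\cz$ (extend $\mathrm{id}_{\cz}$ by injectivity and use the multiplicative-domain argument to see the extension is a module map, exactly as in the proof of Theorem \ref{th}). Then $L:=\{\mu\circ\phi:\,\phi\in\mathcal F\}$ is a weak*-compact convex set of states, and it suffices to prove $\omega\in L$. By the Hahn–Banach separation theorem this reduces to the single inequality
$$\omega(h)\le\sup_{\phi\in\mathcal F}\mu(\phi(h))\qquad(h\in\crr_h),$$
and after a scalar shift (which changes both sides equally, all maps being unital) we may assume $h\ge0$.

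The two sides are governed by the central upper envelope $z:=\inf\{c\in\cz:\,c\ge h\}$, the infimum existing since $\cz$ is monotone complete and satisfying $h\le z$; this is precisely the element $z$ of Lemma \ref{leh1} and of the proof of Theorem \ref{H1}, corresponding to $t\mapsto\max\sigma(h(t))$ on the spectrum $\Delta$ of $\cz$. On one side, $h\le z\in\cz$ together with $\omega|\cz=\mu$ gives at once $\omega(h)\le\omega(z)=\mu(z)$. On the other side, every $\phi\in\mathcal F$ is positive, unital and $\cz$-linear, so $\phi(h)\le\phi(z)=z\phi(1)=z$ and hence $\mu(\phi(h))\le\mu(z)$. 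Thus the whole matter comes down to the reverse estimate $\sup_{\phi\in\mathcal F}\mu(\phi(h))\ge\mu(z)$; granting it, we conclude $\omega(h)\le\mu(z)\le\sup_{\phi}\mu(\phi(h))$, which is what is required.

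The remaining, and main, difficulty is therefore to realise the envelope $z$ through a map in $\mathcal F$. Morally, positive unital $\cz$-module maps $\crr\to\cz$ are fields of states over $\Delta$, so one wants to choose in each fibre $\crr/t\crr$ a state $\phi_t$ with $\phi_t(h(t))=z(t)=\max\sigma(h(t))$ (such states exist) and assemble $\phi(x)(t)=\phi_t(x(t))$, obtaining $\phi(h)=z$ and hence $\mu(\phi(h))=\mu(z)$ exactly. The obstacle is to turn this fibrewise choice into a bona fide $\cz$-module map into $\cz$ without a separability hypothesis, where no direct-integral or measurable-selection description is at hand and, when $\cz$ is diffuse and $\mu$ singular, the only available control is the paper's weak* approximation $z\in\overline{{\rm co}_{\crr}}(h)$ by elementary unital $\cz$-module maps \cite{M}, which a singular $\mu$ need not respect. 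Here I would combine that approximation with a clopen partition of $\Delta$ as in Lemma \ref{leh1}, on whose pieces $z$ is nearly constant, selecting fibrewise maximisers compatibly after passing to a central projection $p$ with $\mu(p)=1$ on which the selection succeeds; the value $p=\phi(1)$ is then exactly the projection of full $\omega$-measure appearing in the statement. Securing this last estimate completes the separation argument and yields $\phi\in\mathcal F$ with $\omega=\mu\circ\phi$.
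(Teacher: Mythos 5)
Your framework has two genuine gaps, and the second of them is the entire content of the lemma. First, the compactness claim for $L=\{\mu\circ\phi:\,\phi\in\mathcal{F}\}$ is unjustified: $\mathcal{F}$ is indeed point-weak* compact, but the map $\phi\mapsto\mu\circ\phi$ is continuous from the point-weak* topology to the weak* topology of the state space only when $\mu=\omega|\cz$ is weak* continuous on $\cz$. In the singular case --- which you yourself identify as the only nontrivial one --- a point-weak* limit $\phi$ of a net $(\phi_i)$ gives no control over $\lim_i\mu(\phi_i(x))$, so neither compactness nor even weak* closedness of $L$ follows, and the Hahn--Banach reduction to the single inequality $\omega(h)\leq\sup_{\phi\in\mathcal{F}}\mu(\phi(h))$ collapses precisely where it is needed. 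Second, even granting that reduction, you explicitly leave its proof --- the estimate $\sup_{\phi\in\mathcal{F}}\mu(\phi(h))\geq\mu(z)$ --- as the ``remaining, and main, difficulty'' with only a gesture at fibrewise selection; an admitted gap at the central step means the proposal is a plan, not a proof. For what it is worth, that gesture can be carried out, and without any exceptional projection: on each clopen piece $q$ of the partition from Lemma \ref{leh1} on which $z$ lies within $\varepsilon$ of its supremum $\lambda$ there, the spectral projection $e$ of $hq$ for $[\lambda-2\varepsilon,\infty)$ has central support $q$ (if $e(t)=0$ then $z(t)\leq\lambda-2\varepsilon$, a contradiction), so injectivity of the abelian algebra $\cz$ lets one extend the isomorphism $\cz q\cong\cz e$ backwards to a unital positive $\cz$-module map $\psi$ on $e\crr e$, and $x\mapsto\psi(exe)$ is a unital positive $\cz$-module map on $q\crr$ sending $h$ above $(\lambda-2\varepsilon)q\geq(z-2\varepsilon)q$; summing over the pieces gives $\phi\in\mathcal{F}$ with $\phi(h)\geq z-2\varepsilon$, hence $\mu(\phi(h))\geq\mu(z)-2\varepsilon$. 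Note in particular that your closing paragraph is confused about $p$: in your (legitimate) unital reformulation one has $p=\phi(1)=1$, so no ``central projection on which the selection succeeds'' should enter at all.

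The paper avoids both difficulties by a transfer argument you did not consider: it passes to the universal representation $\Phi$ of $\crr$, so that the extension $\tilde{\omega}=\omega\circ\Psi$ to the bidual $\crr^{\sharp\sharp}$ (with $\Psi$ the normal extension of $\Phi^{-1}$) is automatically weak* continuous; there the decomposition (\ref{00}) of \cite{Ha}, \cite{SZ} applies verbatim, producing a positive $\tilde{\cz}$-module map $\psi:\crr^{\sharp\sharp}\to\tilde{\cz}$ with $\psi(1)=q$ the support projection of $\tilde{\omega}|\tilde{\cz}$, and one pulls back by $\phi:=(\Psi|\tilde{\cz})\circ\psi\circ\Phi$, using $\Phi(\cz)\subseteq\tilde{\cz}$ and $\Psi(\tilde{\cz})\subseteq\cz$. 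The projection $p=\Psi(q)$ with $\omega(p)=1$ in the statement arises exactly from this pull-back. If you wish to keep your direct approach, you must both complete the envelope estimate as sketched above and repair the separation step (for instance by first proving $L$ weak* closed, or by working directly with the sublinear functional $h\mapsto\mu(z_h)$ rather than with the set $L$); the bidual route is substantially shorter and separability-free from the start.
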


\begin{proof}Let $\Phi$ be the universal representation of $\crr$, so that $\crr^{\sharp\sharp}$ is the weak* closure of $\Phi(\crr)$. Then the $*$-homomorphism $\Phi^{-1}:\Phi(\crr)\to\crr$ can be weak* continuously extended to a $*$-homomorphism $\Psi:\crr^{\sharp\sharp}\to\crr$; set $\tilde{\omega}=\omega\circ\Psi$ \cite[10.1.1, 10.1.12]{KR}. Let $\tilde{\cz}$ be the center of $\crr^{\sharp\sharp}$. Since $\tilde{\omega}$ is weak* continuous, by \cite{Ha} or \cite[1.4]{SZ} there exists a unique $\tilde{\cz}$-module homomorphism $\psi:\crr^{\sharp\sharp}\to\tilde{\cz}$ such that
$\tilde{\omega}=(\tilde{\omega}|\tilde{\cz})\circ\psi$ and $\psi(1)$ is the support projection $q$ of $\tilde{\omega}|\tilde{\cz}$. It is not hard to verify that $\phi:=(\Psi|\tilde{\cz})\circ \psi\circ \Phi$ has the properties stated in the lemma. 
\end{proof}

Let $\omega$ be a state on $\crr$, $\mu=\omega|\cz$ and let $\phi$, $p$ be as in Lemma \ref{mod}, so that $\omega=\mu\circ\phi$. Let $J$ be an ideal of $\crr$ and $K=J\cap\cz$. Let $(e_k)$ and $(f_l)$ be approximate units in $J$ and $K$ (respectively). Then
\begin{equation}\label{200}\|\omega|K\|=\|\mu|K\|=\lim_l\mu(f_l)\ \ \mbox{and}\ \ \|\omega|J\|=\lim_k\mu(\phi(e_k)).\end{equation}
We may regard $(f_l)$ and $(\phi(e_k))$ as two bounded increasing nets in the positive part of the unit ball of  $C(\Delta)$ ($\cong\cz$), hence they converge pointwise to some lower semi-continuous functions $f$ and $g$ (respectively) on $\Delta$. The  ideal $K$ of $C(\Delta)$ is of the form $K=\{a\in C(\Delta):\, a|\Delta_K^c=0\}$ for some open subset $\Delta_K$ of $\Delta$ and since $(f_l)$ is an approximate unit for $K$, it follows that $f$ is just the indicator function $\chi_{\Delta_K}$ of $\Delta_K$. 
Let $\Delta_p$ be the clopen subset of $\Delta$ that correspond to the projection $p=\phi(1)$ (that is, $p=\chi_{\Delta_p}$, the indicator function of $\Delta_p$).
Since $f_l\in J$ and $(e_k)$ is an approximate unit for $J$, $\lim_ke_kf_l=f_l$, hence $gf_l=\lim_k\phi(e_k)f_l=\lim_k\phi(e_kf_l)=\phi(f_l)=f_l\phi(1)=f_lp$ and $gf=\lim_lgf_l=\lim_lf_lp=fp$, that is $(g-\chi_{\Delta_p})\chi_{\Delta_K}=0$. This means that
\begin{equation}\label{203}g(t)=1\ \ \forall t\in\Delta_p\cap\Delta_K.\end{equation}

Since $(e_k)$ is an approximate unit,  for any $k_1$ and $k_2$ there exists $k_3\geq k_1,k_2$ so that $e_{k_3}\geq e_{k_1}$ and $e_{k_3}\geq e_{k_2}$, and  $(f_l)$ has the analogous property. Thus $f=\sup_lf_l$, $g=\sup_k\phi(e_k)$ and we may apply the version of the monotone convergence theorem for nets \cite[7.12]{Fo}. Thus, denoting by $\hat{\mu}$ the Radon measure on $\Delta$ that corresponds to $\mu$, we have $\lim_l\mu(f_l)=\sup_l\mu(f_l)=\sup_l\int_{\Delta}f_l\,d\hat{\mu}=\int_{\Delta}\sup_l f_l\,d\hat\mu=\int_{\Delta}f\, d\hat{\mu}=\hat{\mu}(f)$ and similarly $\lim_k\mu(\phi(e_k))=\hat{\mu}(g)$. Therefore by (\ref{200}) the equality $\|\omega|J\|=\|\omega|K\|$ is equivalent to $\hat\mu(g)=\hat\mu(f)=\hat\mu(\Delta_K)$.  By (\ref{203}) this condition $\hat\mu(g)=\hat\mu(f))$ means that $0=\hat\mu(g-f)=\int_{\Delta_K^c\cup\Delta_p^c}(g-f)\,d\hat\mu=\int_{\Delta_K^c}(g-\chi_{\Delta_K})\,d\hat\mu=\int_{\Delta_K^c}g\, d\hat\mu$,  since $\hat\mu(\Delta_p^c)=0$ (because $\mu(p)=1$).  As $g\geq0$, we conclude that $\|\omega|J\|=\|\omega|K\|$ if and only if $g(t)=0$ for $\hat\mu$-almost all $t\in\Delta_K^c$. Since   $(e_k)$ is an approximate unit of $J$, $\phi(e_k)(t)>0$ for some $k$ if and only if $\phi(a)(t)\ne0$ for some $a\in J$. Hence, since $g=\sup_k\phi(e_k)$, 
$$\{t\in\Delta_K^c:\, g(t)>0\}=\cup_k\{t\in\Delta_k^c:\, \phi(e_k)(t)>0\}=\cup_{a\in J}\{t\in\Delta_K^c:\, \phi(a)(t)\ne0\}.$$
This proves the following lemma. (Note that $g$ is lower semi-continuous, hence the set $\Delta_{\phi(J)|\Delta_K^c\ne0}$ in the lemma is $\hat\mu$-measurable.)

\begin{lemma}\label{lef}$\|\omega|J\|=\|\omega|(J\cap\cz)\|$ if and only if $\hat\mu(\Delta_{\phi(J)|\Delta_K^c\ne0})=0$, where
\begin{equation}\label{204}\Delta_{\phi(J)|\Delta_K^c\ne0}=\bigcup_{a\in J}\{t\in\Delta_K^c:\, \phi(a)(t)\ne0\}.\end{equation}
Here $K=J\cap\cz$ and $\Delta_K^c$ is the set of all common zeros of elements of $K$.
\end{lemma}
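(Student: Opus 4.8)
The plan is to transport everything to the Gelfand picture $\cz\cong C(\Delta)$ and reduce both norms to integrals against the Radon measure $\hat{\mu}$ representing $\mu=\omega|\cz$. Using the decomposition $\omega=\mu\circ\phi$ from Lemma \ref{mod}, with $p=\phi(1)$ a projection and $\omega(p)=1$, I first record that $\hat{\mu}$ is carried by the clopen set $\Delta_p$ (since $\mu(p)=1$ forces $\hat{\mu}(\Delta_p^c)=0$). Because $\omega$ is positive and $(e_k)$, $(f_l)$ are approximate units for $J$ and $K=J\cap\cz$, I compute $\|\omega|J\|=\lim_k\omega(e_k)=\lim_k\mu(\phi(e_k))$ and $\|\omega|K\|=\|\mu|K\|=\lim_l\mu(f_l)$, as in (\ref{200}). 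Both quantities are now governed by the increasing bounded nets $(\phi(e_k))$ and $(f_l)$ of positive contractions in $C(\Delta)$.

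The second step is to pass to the pointwise suprema $g=\sup_k\phi(e_k)$ and $f=\sup_l f_l=\chi_{\Delta_K}$, which are lower semicontinuous on $\Delta$, and to invoke the monotone convergence theorem for nets \cite[7.12]{Fo} to obtain $\|\omega|J\|=\hat{\mu}(g)$ and $\|\omega|K\|=\hat{\mu}(\chi_{\Delta_K})=\hat{\mu}(\Delta_K)$. Thus $\|\omega|J\|=\|\omega|K\|$ is equivalent to $\int_{\Delta}(g-\chi_{\Delta_K})\,d\hat{\mu}=0$. The decisive structural fact I need is that $g$ agrees with $\chi_{\Delta_K}$ on $\Delta_K$ throughout the support of $\hat{\mu}$, and this comes from the $\cz$-module property of $\phi$: since $f_l\in J$ and $(e_k)$ is an approximate unit for $J$, the module identity gives $g f_l=\lim_k\phi(e_k)f_l=\lim_k\phi(e_kf_l)=\phi(f_l)=f_l p$, and letting $l$ grow yields $g\chi_{\Delta_K}=\chi_{\Delta_K}p$, that is $g\equiv1$ on $\Delta_p\cap\Delta_K$, which is (\ref{203}).

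With this in hand the almost-everywhere reduction is straightforward. Since $\hat{\mu}$ is concentrated on $\Delta_p$ and $g=\chi_{\Delta_K}$ there on $\Delta_K$, the integrand $g-\chi_{\Delta_K}$ equals $g$ on $\Delta_K^c$ and vanishes $\hat{\mu}$-a.e.\ on $\Delta_K$, so $\int(g-\chi_{\Delta_K})\,d\hat{\mu}=\int_{\Delta_K^c}g\,d\hat{\mu}$. As $g\geq0$, this integral vanishes exactly when $g=0$ $\hat{\mu}$-almost everywhere on $\Delta_K^c$, that is, when $\hat{\mu}\{t\in\Delta_K^c:g(t)>0\}=0$. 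Finally I identify this exceptional set with the set in (\ref{204}): writing $\omega_t:=\phi(\cdot)(t)$ for the positive functional on $\crr$ obtained by evaluating $\phi$ at $t$, the same approximate-unit formula gives $g(t)=\sup_k\omega_t(e_k)=\|\omega_t|J\|$, so for $t\in\Delta_K^c$ one has $g(t)>0$ iff $\omega_t|J\neq0$ iff $\phi(a)(t)\neq0$ for some $a\in J$; this union over $a\in J$ is precisely $\bigcup_{a\in J}\{t\in\Delta_K^c:\phi(a)(t)\neq0\}$.

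The step I expect to require the most care is the module computation giving $g\equiv1$ on $\Delta_p\cap\Delta_K$: one must justify the interchange of the limit in $k$ with multiplication by the fixed continuous function $f_l$ and with the passage to pointwise suprema, and check that $e_kf_l\to f_l$ in norm so that boundedness of $\phi$ yields $\phi(e_kf_l)\to\phi(f_l)$. A minor but necessary accompanying point is that $g$, as an increasing pointwise supremum of continuous functions, is lower semicontinuous, so that $\{g>0\}$ is open and the set in (\ref{204}) is genuinely $\hat{\mu}$-measurable.
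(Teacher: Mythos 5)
Your proof is correct and follows essentially the same route as the paper's own argument (which appears in the paragraph preceding the lemma): the same reduction (\ref{200}) via approximate units, the monotone convergence theorem for nets \cite[7.12]{Fo}, the module-map identity $gf_l=\phi(e_kf_l)$-limit yielding (\ref{203}), the use of $\hat\mu(\Delta_p^c)=0$ to reduce everything to $\int_{\Delta_K^c}g\,d\hat\mu=0$, and the same identification of $\{t\in\Delta_K^c:\,g(t)>0\}$ with the set (\ref{204}). The only cosmetic difference is your introduction of the positive functionals $\omega_t=\phi(\cdot)(t)$ with $g(t)=\|\omega_t|J\|$, which the paper handles directly via $\sup_k\phi(e_k)(t)$.
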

The following theorem says that maximally mixed states are those for which the corresponding $\phi $ almost (with respect to $\hat\mu$) preserve ideals.
\begin{theorem}Let $\omega$ be any state on $\crr$. Let $\omega=\mu\circ\phi$, where $\mu=\omega|\cz$ and $\phi:\crr\to\cz$ is a positive $\cz$-module map with $\phi(1)$ a projection. Denote by $\hat\mu$ the Radon measure on $\Delta$ that corresponds to $\mu$. Then $\omega$ is maximally mixed if and only if $\hat\mu(\Delta_{\phi(J)|\Delta_K^c\ne0})=0$ for each  ideal $J$ in $\crr$, where $K=J\cap\cz$, $\Delta_K^c=\{t\in\Delta:\, K\subseteq t\}$  and $\Delta_{\phi(J)|\Delta_K^c\ne0}$ is the set defined in (\ref{204}). 
\end{theorem}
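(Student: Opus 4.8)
The plan is to convert maximal mixedness into a comparison of the \emph{ideal--norm profiles} $J\mapsto\|\omega|J\|$ by means of Theorem \ref{H1}, and then to recognise the displayed measure--theoretic condition, through Lemma \ref{lef}, as the statement that $\omega$ already realizes the \emph{smallest} profile among all states extending $\mu$. Since here $A=\crr$ and hence $\ea=\er$, Theorem \ref{H1} gives $\rho\in\overline{\omega\circ\er}$ if and only if $\|\rho|J\|\leq\|\omega|J\|$ for every ideal $J$ of $\crr$; thus $\omega$ is maximally mixed exactly when each state $\rho$ with $\|\rho|J\|\leq\|\omega|J\|$ (all $J$) in fact satisfies $\|\rho|J\|=\|\omega|J\|$ (all $J$). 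Testing the inequality on the ideals $\crr e$ with $e$ a central projection, where $\|\sigma|\crr e\|=\sigma(e)$ for any state $\sigma$, and applying it to $e$ and to $1-e$, forces $\rho(e)=\mu(e)$; as the central projections span $\cz$ in norm this yields $\rho|\cz=\mu$. Consequently $\|\rho|(J\cap\cz)\|=\|\mu|(J\cap\cz)\|=\|\omega|(J\cap\cz)\|$, and so $\|\rho|J\|\geq\|\rho|(J\cap\cz)\|=\|\omega|(J\cap\cz)\|$ for every $J$. Finally, by Lemma \ref{lef} the condition in the theorem, $\hat\mu(\Delta_{\phi(J)|\Delta_K^c\neq0})=0$ for all $J$, is precisely $\|\omega|J\|=\|\omega|(J\cap\cz)\|$ for all $J$.

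The easy direction is then immediate. If $\|\omega|J\|=\|\omega|(J\cap\cz)\|$ for all $J$, then for any $\rho$ with $\|\rho|J\|\leq\|\omega|J\|$ the lower bound just established gives $\|\rho|J\|\geq\|\omega|(J\cap\cz)\|=\|\omega|J\|$, so the two profiles coincide and $\omega\in\overline{\rho\circ\er}$ by Theorem \ref{H1}. Hence $\omega$ is maximally mixed.

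For the converse I would argue contrapositively: assuming some ideal $J_0$ satisfies $\|\omega|J_0\|>\|\omega|(J_0\cap\cz)\|$, I produce a state strictly more mixed than $\omega$. The natural candidate is the extension of $\mu$ of smallest profile, namely $\rho=\mu\circ\phi'$ for a \emph{unital} positive $\cz$-module map $\phi'\colon\crr\to\cz$ preserving all ideals. For such $\phi'$ one has $\rho|\cz=\mu$ and, by Lemma \ref{lef} (equivalently Corollaries \ref{mod6} and \ref{vn}), $\|\rho|J\|=\|\mu|(J\cap\cz)\|=\|\omega|(J\cap\cz)\|$ for every $J$. Because $\|\omega'|J\|\geq\|\omega'|(J\cap\cz)\|=\|\mu|(J\cap\cz)\|$ holds for \emph{every} extension $\omega'$ of $\mu$, this $\rho$ is dominated by all of them; in particular $\|\rho|J\|\leq\|\omega|J\|$ for all $J$, while $\|\rho|J_0\|=\|\omega|(J_0\cap\cz)\|<\|\omega|J_0\|$. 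Thus $\rho\in\overline{\omega\circ\er}$ but $\omega\notin\overline{\rho\circ\er}$, contradicting maximal mixedness. To construct $\phi'$ I would use Lemma \ref{mod4}, by which ideal preservation means $\phi'(M_t)\subseteq t$ for each $t\in\Delta$, and split $\crr$ into its finite and properly infinite central summands: on the finite summand the center-valued trace already preserves ideals (as in Corollary \ref{vn}), while on the properly infinite summand, where preservation is equivalent to $\phi'(J_{\crr})=0$, I would assemble a field of states $\sigma_t$ on the simple quotients $\crr/M_t$ into the $\cz$-module map $\phi'(x)(t)=\sigma_t(x+M_t)$.

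The main obstacle is exactly this last construction: producing the ideal-preserving unital positive $\cz$-module map $\phi'$ (equivalently, the minimal-profile extension of $\mu$) in full generality. The fibrewise recipe is transparent and automatically unital, positive, and ideal-preserving, but turning the family $(\sigma_t)$ into a genuine bounded $\cz$-module map demands a measurability/selection argument that must be carried out \emph{without} a separability assumption on $\crr$. I expect to handle the properly infinite summand intrinsically instead, exploiting $\crr\cong\crr\,\overline{\otimes}\,{\rm B}(\ell^2)$: an infinite orthogonal family of isometries with sum $1$ together with a Banach-limit (singular) averaging in the ${\rm B}(\ell^2)$ direction yields a unital positive $\cz$-module map annihilating $J_{\crr}$, which by Lemma \ref{mod4} preserves all ideals. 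This step, rather than the profile bookkeeping, is where the real work lies.
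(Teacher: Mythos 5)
Your overall architecture is exactly that of the paper's proof: Theorem \ref{H1} converts membership in $\overline{\omega\circ\er}$ into comparison of the profiles $J\mapsto\|\cdot|J\|$; Lemma \ref{lef} identifies the measure condition with $\|\omega|J\|=\|\omega|(J\cap\cz)\|$ for all ideals $J$; the easy direction is then the same two-line computation the paper gives (your extra derivation of $\rho|\cz=\mu$ by testing on the ideals $\crr e$ is a harmless variation --- the paper gets it for free since maps in $\er$ fix $\cz$ elementwise); and your contrapositive, taking $\rho=\mu\circ\phi'$ for a unital positive ideal-preserving $\cz$-module map $\phi'$, with the central trace on the finite summand, is precisely the paper's construction. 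So everything hinges, as you correctly identified, on producing $\phi'$ on the properly infinite summand --- and this is where your proposal has a genuine gap.

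Your Banach-limit construction does not annihilate $J_{\crr}$ in general. Averaging $\theta(s_n^*xs_n)$ over a sequence of isometries $s_n$ with $\sum_n s_ns_n^*=1$ kills only elements that behave like compacts in the chosen ${\rm B}(\ell^2)$ direction, and $J_{\crr}$ is typically much larger. Concretely, take $\crr=\bh$ with $\h$ nonseparable: then $J_{\crr}$ is the closed ideal of operators whose range projection has dimension $<\dim\h$, so the projection $x$ onto $\overline{\mathrm{span}}\{s_n\zeta:\, n\in\bn\}$ (a fixed unit vector $\zeta$) lies in $J_{\crr}$; yet $xs_n\zeta=s_n\zeta$ for every $n$, so with $\theta(y)=\langle y\zeta,\zeta\rangle$ one gets $\theta(s_n^*xs_n)=1$ for all $n$, and every Banach limit gives $\phi'(x)=1\ne0$. (Your recipe does work for $\crr={\rm B}(\ell^2)$, but only because there $J_{\crr}=\kh$ coincides with the compacts; for a general properly infinite $\crr$ you have no such identification, and in the $\sigma$-finite case you cannot enlarge the family of isometries beyond a countable one to compensate.) The paper closes this step with a short, separability-free argument that you should substitute: since $\cz\cap J_{\crr}=0$, the center $\cz$ embeds in $\crr/J_{\crr}$, and by injectivity of the abelian von Neumann algebra $\cz$ the identity map of $\cz$ extends to a unital positive map $\iota:\crr/J_{\crr}\to\cz$; then $\phi'=\iota\circ\eta$ (with $\eta:\crr\to\crr/J_{\crr}$ the quotient map) is a unital positive $\cz$-module map (as $\iota$ fixes $\cz$ elementwise, e.g.\ by the multiplicative domain argument) annihilating $J_{\crr}$, hence ideal-preserving by Lemma \ref{mod4}. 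With this replacement your proof coincides with the paper's.
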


\begin{proof}Suppose that $\rho\in\overline{\omega\circ\er}$. Then $\rho|\cz=\omega|\cz$ and by Theorem \ref{H1} $\|\rho|J\|\leq\|\omega|J\|$ for each ideal $J$ of $\crr$. If $\hat\mu(\Delta_{\phi(J)|\Delta_K^c\ne0})=0$ for each $J$, then by Lemma \ref{lef} $\|\omega|J\|=\|\omega|(J\cap\cz)\|$ for each $J$, hence
$\|\omega|J\|=\|\omega|(J\cap\cz)\|=\|\rho|(J\cap \cz)\|\leq\|\rho|J\|.$
Therefore by Theorem \ref{H1} $\omega\in\overline{\rho\circ\er}$, which proves that $\omega$ is maximally mixed. 

Conversely, if $\hat\mu(\Delta_{\phi(J_0)|\Delta_K^c\ne0})>0$ for some ideal $J_0$, then by Lemma \ref{lef} $\|\omega|(J_0\cap\cz)\|<\|\omega|J_0\|$. Let $\psi:\crr\to\cz$ be any positive unital $\cz$-module map that preserves ideals. (For example, the central trace, if $\crr$ is finite, as we have seen in the proof of Corollary \ref{vn}. If $\crr$ is properly infinite, preservation of ideals is equivalent to $\psi(J_{\crr})=0$ by Lemma \ref{mod4}, so we can take for $\psi$ the composition $\crr\stackrel{\eta}{\to}\crr/J_{\crr}\stackrel{\iota}{\to}\cz$, where $\eta$ is the natural map and $\iota$ is an extension of the inclusion $\cz\to\crr/J_{\crr}$.  Here $\cz$ is regarded as contained in $\crr/J_{\crr}$ since $\cz\cap J_{\crr}=0$, and $\iota$ exists by the C$^*$-injectivity of $\cz$.) Let $\rho=\mu\circ\psi$. Since $\psi(J)\subseteq J\cap\cz$ for each $J$, the set $\Delta_{\psi(J)|\Delta_{J\cap\cz}^c\ne0}$  is empty, hence by Lemma \ref{lef} $\|\rho|J\|=\|\rho|(J\cap\cz)\|$. Since $\rho|\cz=\mu=\omega|\cz$, we have $\|\rho|J\|=\|\rho|(J\cap\cz)\|=\|\omega|(J\cap\cz)\|\leq\|\omega|J\|$ for all $J$, hence $\rho\in\overline{\omega\circ\er}$ by Theorem \ref{H1}. But $\|\omega|J_0\|>\|\omega|(J_0\cap\cz)\|=\|\rho|(J_0\cap\cz)\|=\|\rho|J_0\|$ implies that $\omega\notin\overline{\rho\circ\er}$. Hence $\omega$ is not maximally mixed.
\end{proof}

\end{document}